\theoremstyle{plain}
   \newtheorem{teo}{Theorem}
   \newtheorem{coro}[teo]{Corollary}
   \newtheorem{lema}[teo]{Lemma}
\theoremstyle{definition}
\theoremstyle{remark}
 \newtheorem{obs}{Remark}
\numberwithin{equation}{section}
\numberwithin{teo}{section}
\definecolor{zzttqq}{rgb}{0.6,0.2,0.}
\definecolor{qqzzqq}{rgb}{0.,0.6,0.}
\definecolor{aquamarine}{rgb}{0.5, 1.0, 0.83}
\definecolor{blizzardblue}{rgb}{0.67, 0.9, 0.93}
	\definecolor{blush}{rgb}{0.87, 0.36, 0.51}
	\definecolor{celestialblue}{rgb}{0.29, 0.59, 0.82}
	\definecolor{chocolate(web)}{rgb}{0.82, 0.41, 0.12}
\begin{document}
		
	\title[Optimal parameters related with continuity properties of the $I_{\gamma,m}$]{Optimal  parameters related with continuity properties of the multilinear fractional integral operator between Lebesgue and Lipschitz spaces}

	\author[F. Berra]{Fabio Berra}
	\address{CONICET and Departamento de Matem\'{a}tica (FIQ-UNL),  Santa Fe, Argentina.}
	\email{fberra@santafe-conicet.gov.ar}
	
	\author[G. Pradolini]{Gladis Pradolini}
	\address{CONICET and Departamento de Matem\'{a}tica (FIQ-UNL),  Santa Fe, Argentina.}
	\email{gladis.pradolini@gmail.com}
%
	\author[W. Ramos]{Wilfredo Ramos}
	\address{CONICET and Departamento de Matem\'{a}tica (FaCENA-UNNE),  Corrientes, Argentina.}
	\email{marilcarena@gmail.com}
	
	\thanks{The author were supported by CONICET, UNL, ANPCyT and UNNE}
	
	\subjclass[2010]{26A33, 26D10}
	
	\keywords{Multilinear fractional operator, Lipschitz spaces, weights}
	
	\begin{abstract}
		We deal with the boundedness of the multilinear fractional integral operator $I_{\gamma,m}$ from a product of weighted Lebesgue spaces into adequate weighted Lipschitz spaces. Our results generalize some previous estimates not only for the linear case but also for the unweighted problem in the multilinear context.  We characterize the classes of weights for which the problem described above holds and show the optimal range of the parameters involved. The optimality is understood in the sense that the parameters defining the corresponding spaces belong to a certain region. We further exhibit examples of weights for the class which cover the mentioned area. 
	\end{abstract}

	\maketitle
	
	\section{Introduction}\label{seccion: introduccion}
	
	Given $0<\gamma<n$, the classical fractional integral operator $I_\gamma$ is defined by
	\[I_\gamma f(x)=\int_{\mathbb{R}^n}\frac{f(y)}{|x-y|^{n-\gamma}}\,dy,\]
	provided the integral is finite. In \cite{Muckenhoupt-Wheeden74} Muckenhoupt and Wheeden proved that, if $1<p<n/\gamma$ and $1/q=1/p-\gamma/n$, this operator maps $L^p(w^p)$ into $L^q(w^q)$ if and only if $w\in A_{p,q}$. Moreover, when $p=n/\gamma$ they showed that $I_\gamma$ maps $L^{n/\gamma}(w^{n/\gamma})$ into a certain weighted version  of the bounded mean oscillation spaces $\mathrm{BMO}$ if and only if $w^{-n/(n-\gamma)}\in A_1$.
	
	Later on, in \cite{Pradolini01} the author proved that for $n/\gamma<p<n/(\gamma-1)^+$ and $\delta/n=\gamma/n-1/p$ the operator $I_\gamma$ maps $L^p(w^p)$ into a weighted version of Lipschitz spaces associated to the parameter $\delta$. A two-weighted problem it was also studied, giving the optimal parameters for which the associated classes of weights are nontrivial. Other results related with the continuity properties of $I_\gamma$ in the range of $p$ given above can be found in  \cite{CPR16} and for different versions of weighted Lipschitz spaces in \cite{HSV}  and \cite{Prado01cal}.     
	
	Given $m\in\mathbb{N}$ and $0<\gamma<mn$ the multilinear version of order $m$ of the operator above, $I_{\gamma,m}$, is defined as follows
	\[I_{\gamma,m} \vec{f}(x)=\int_{(\mathbb{R}^n)^m} \frac{\prod_{i=1}^m f_i(y_i)}{(\sum_{i=1}^m|x-y_i|)^{mn-\gamma}}\,d\vec{y},\]
	where $\vec{f}=(f_1,f_2,\dots, f_m)$ and $\vec{y}=(y_1,y_2,\dots, y_m)$, provided the integral is finite.
	
	The continuity properties of  $I_{\gamma,m}$ were studied for several authors. For example,  it was shown in \cite{Moen09} that
	$I_{\gamma,m}: \prod_{i=1}^m L^{p_i}\hookrightarrow L^q$, where $1/p=\sum_{i=1}^m1/p_i$ and $1/q=1/p-\gamma/n$. The author also considered weighted versions of these estimates, generalizing the result in \cite{Muckenhoupt-Wheeden74}. On the other hand, in \cite{AHIV} the authors proved unweighted estimates of $I_{\gamma,m}$ between $\prod_{i=1}^m L^{p_i}$ and  Lipschitz-$\delta$ spaces, with $0\leq \delta<1$ and $\delta/n=\gamma/n-1/p$. For other type of estimates involving $I_{\gamma,m}$ see also \cite{Pradolini10}.
	
	In this paper we study the boundedness of the operator $I_{\gamma,m}$ between a product of weighted Lebesgue spaces and certain weighted Lipschitz spaces, generalizing the linear case proved in \cite{Pradolini01} and the unweighted problem given in \cite{AHIV}. We do not only consider related weights, which is an adequate extension of the one-weight estimates in the linear case, but also with independent weights exhibiting a generalization of the two-weight problem for $m=1$. We characterize the classes of weights for which the problem described above holds, by also showing the optimal range of the parameters involved. The optimality is understood in the sense that the parameters defining the corresponding spaces belong to a certain region, becoming trivial outside of it.
	 Moreover we exhibit examples of weights covering this area giving, in this way, a complete theory.   
	  As far as we know, the results in this paper are a first approach to this topic in the weighted multilinear context. 
	  
	We shall now introduce the classes of weights and the notation required in order to state our main results. Throughout the paper the multilinear parameter  will be denoted by $m\in \mathbb{N}$. Let $0<\gamma<mn$, $\delta\in \mathbb{R}$ and $\vec{p}=(p_1,p_2,\dots, p_m)$ be an $m$-tuple of exponents  where $1\le p_i \le \infty$ for $1\le i\le m$. We define $p$ such that $1/p=\sum_{i=1}^{m}1/p_i$. 
	
	Given the weights $w$, $v_1,\dots, v_m$, if $\vec{v}=(v_1,v_2,\dots,v_m)$ we say that the pair $(w,\vec{v})$ belongs to the class $\mathbb{H}_m(\vec{p},\gamma,\delta)$ if there exists a positive constant $C$ such that the inequality
	\begin{equation*}
		\frac{\|w\mathcal{X}_B\|_\infty}{|B|^{(\delta-1)/n}}\prod_{i=1}^m\left(\int_{\mathbb{R}^n} \frac{v_i^{-p_i'}(y)}{(|B|^{1/n}+|x_B-y|)^{(n-\gamma_i+1/m)p_i'}}\,dy\right)^{1/p_i'}\leq C
	\end{equation*} 
	holds for every ball $B$, where $x_B$ denotes the center of $B$ and $\sum_{i=1}^m\gamma_i=\gamma$, with $0<\gamma_i<n$ for every $i$. The integral above is understood as usual when $p_i=1$, (see \S~\ref{section: preliminares} for further details).
	
		When $m=1$ the class defined above was first introduced in \cite{Pradolini01} (see also \cite{Muckenhoupt-Wheeden74} for the case $\delta=0$). In that paper the author showed  nontrivial weights when $\gamma-n\leq \delta\leq\min\{1,\gamma-n/p\}$. We shall see that a similar restriction on $\delta$ appears in the multilinear context.
	
	We are in a position to state our first result.
	
	\begin{teo}\label{teo: teo principal}
		Let $0<\gamma<mn$, $\delta\in\mathbb{R}$,  and $\vec{p}$ a vector of exponents that verifies $p>n/\gamma$. Let $(w,\vec{v})$ be a pair  such that $v_i^{-p_i'}\in \mathrm{RH}_{m}$, for $1\leq i\leq m$. Then the following statements are equivalent:
		\begin{enumerate}[\rm(1)]
		\item \label{item: teo principal item 1} The operator $I_{\gamma,m}$ is bounded from  $\prod_{i=1}^m L^{p_i}(v_i^{p_i})$ to $\mathbb{L}_{w}(\delta)$;
		\item \label{item: teo principal item 2} The pair $(w,\vec{v})$ belongs to $\mathbb{H}_m(\vec{p},\gamma,\delta)$. 
		\end{enumerate}
	\end{teo}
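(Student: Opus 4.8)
The plan is to prove the two implications separately, following the classical template for fractional operators mapping into Lipschitz-type spaces, adapted to the multilinear weighted setting. The basic device in both directions is, for a fixed ball $B=B(x_B,r)$ with $r\sim|B|^{1/n}$, the splitting $f_i=f_i^{0}+f_i^{\infty}$ with $f_i^{0}=f_i\mathcal{X}_{2B}$ and $f_i^{\infty}=f_i\mathcal{X}_{(2B)^c}$, which gives
\[I_{\gamma,m}\vec f=\sum_{\alpha\in\{0,\infty\}^m}I_{\gamma,m}\big(f_1^{\alpha_1},\dots,f_m^{\alpha_m}\big),\]
the index $\alpha=(\infty,\dots,\infty)$ being the \emph{global} term (where the kernel is smooth on $B$) and the remaining $2^m-1$ being \emph{local} terms, each carrying at least one factor supported in $2B$. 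Since $p>n/\gamma$ yields $1/p-\gamma/n<0$, I would first prove the estimate for bounded, compactly supported $\vec f$, for which every integral converges absolutely and $I_{\gamma,m}\vec f$ is continuous, and then obtain the general case by a standard limiting argument.

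For $(2)\Rightarrow(1)$ I would handle the global term via the smoothness of the kernel: for $x,z\in B$ and $y_i\notin 2B$ a mean value estimate gives
\[\left|\frac{1}{\big(\sum_{i=1}^m|x-y_i|\big)^{mn-\gamma}}-\frac{1}{\big(\sum_{i=1}^m|z-y_i|\big)^{mn-\gamma}}\right|\lesssim\frac{|x-z|}{\big(\sum_{i=1}^m(|B|^{1/n}+|x_B-y_i|)\big)^{mn-\gamma+1}},\]
and since $\sum_{i=1}^m(n-\gamma_i+1/m)=mn-\gamma+1$ with each $n-\gamma_i+1/m>0$, the right-hand denominator is bounded below by $\prod_{i=1}^m(|B|^{1/n}+|x_B-y_i|)^{n-\gamma_i+1/m}$, which decouples the variables. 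H\"older's inequality in each $y_i$ with exponents $p_i,p_i'$ then makes the product of weighted integrals defining $\mathbb{H}_m(\vec p,\gamma,\delta)$ appear, and using the $\mathbb{H}_m$ inequality one arrives at
\[\big|I_{\gamma,m}(\vec f^{\,\infty})(x)-I_{\gamma,m}(\vec f^{\,\infty})(z)\big|\lesssim |x-z|\cdot\frac{|B|^{(\delta-1)/n}}{\|w\mathcal{X}_B\|_\infty}\prod_{i=1}^m\|f_i\|_{L^{p_i}(v_i^{p_i})},\]
which, since $|x-z|\lesssim|B|^{1/n}$, is exactly the bound on the weighted oscillation of $I_{\gamma,m}\vec f$ over $B$ required by $\mathbb{L}_w(\delta)$.

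For the local terms there is no cancellation, so for $\alpha\ne(\infty,\dots,\infty)$ I would estimate $|I_{\gamma,m}(f_1^{\alpha_1},\dots,f_m^{\alpha_m})(x)|$ directly on $B$: writing $mn-\gamma=\sum_{i=1}^m(n-\gamma_i)$ and using $\sum_j|x-y_j|\ge|x-y_i|$, the factors with $\alpha_i=0$ are dominated by the linear fractional integrals $I_{\gamma_i}(|f_i|\mathcal{X}_{2B})(x)$ and those with $\alpha_i=\infty$ by quantities of the type already met. Integrating over $B$, invoking $L^{s_i}$--$L^{q_i}$ bounds for the $I_{\gamma_i}$ together with H\"older's inequality, and using the hypothesis $v_i^{-p_i'}\in\mathrm{RH}_m$ to convert the resulting local averages --- quantities such as $\big(\tfrac{1}{|B|}\int_{2B}(v_i^{-p_i'})^{m}\big)^{1/m}$ --- into the integral quantities of $\mathbb{H}_m$, I expect to recover for each such $\alpha$ the same bound as for the global part (now with the constant $0$ subtracted, which is harmless since these pieces are estimated in absolute value). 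Summing the $2^m$ contributions yields $\|I_{\gamma,m}\vec f\|_{\mathbb{L}_w(\delta)}\lesssim\prod_{i=1}^m\|f_i\|_{L^{p_i}(v_i^{p_i})}$.

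For $(1)\Rightarrow(2)$ the plan is the usual testing argument: fix $B$ and feed the boundedness hypothesis with functions $f_i$ adapted to $B$ and to $v_i$ --- truncations to large balls, or to dyadic annuli around $B$, of $v_i^{-p_i'}$ multiplied by a suitable power of $(|B|^{1/n}+|x_B-\cdot|)$ and an appropriate sign --- chosen so that $\prod_i\|f_i\|_{L^{p_i}(v_i^{p_i})}$ reproduces, up to constants, the left-hand side of the $\mathbb{H}_m$ inequality, while the smoothness of the kernel on $B$ forces a matching lower bound for the oscillation of $I_{\gamma,m}\vec f$ there; a limiting/summation argument then produces $(w,\vec v)\in\mathbb{H}_m(\vec p,\gamma,\delta)$. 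The step I expect to be the main obstacle is the local part of $(2)\Rightarrow(1)$: one must reconcile the genuinely multilinear class $\mathbb{H}_m$ with estimates that are, by their nature, products of single-variable fractional integral bounds, and it is there that the reverse H\"older hypothesis plays its main role and that the bookkeeping of exponents --- especially the shift $n-\gamma_i+1/m$ --- must be carried out with care; the range $\delta\le0$, where $\mathbb{L}_w(\delta)$ behaves like a weighted $\mathrm{BMO}$ space, will likely require additional attention in this step.
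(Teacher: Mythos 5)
There is a genuine gap in your treatment of the \emph{mixed} terms in $(2)\Rightarrow(1)$. You apply cancellation (the mean value estimate, which upgrades the kernel decay from $mn-\gamma$ to $mn-\gamma+1$) only to the all-far piece $\alpha=(\infty,\dots,\infty)$, and propose to estimate every term with at least one factor supported in $2B$ purely by size. But the class $\mathbb{H}_m(\vec p,\gamma,\delta)$ controls the far tails only through integrals with the exponents $n-\gamma_i+1/m$, whose total is $mn-\gamma+1$: the extra $+1$ is exactly the smoothness gain. A size-only estimate of a mixed term with far set $S\neq\emptyset$ has only $mn-\gamma$ of decay to distribute, so to give each far variable its required exponent $n-\gamma_j+1/m$ you must have
\[
\sum_{i\notin S}(n-\gamma_i)\;\ge\;\frac{|S|}{m},
\]
and for $|S|=m-1$ this forces $\gamma_i\le n-1+1/m$ for every $i$, which is impossible (for the fixed decomposition $\sum_i\gamma_i=\gamma$ used in the definition of the class) whenever $\gamma>mn-m+1$; one cannot borrow the missing decay from $|x-y_j|\to\infty$, and using smaller exponents for the far variables is not covered by the hypothesis and ruins the homogeneity that produces the factor $|B|^{1+\delta/n}\|w\mathcal X_B\|_\infty^{-1}$. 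The paper avoids this by subtracting the constant $a_B$ (working with $J_{\gamma,m}$) and applying the kernel-difference estimate on \emph{all} of $(\tilde B^m)^c$, i.e.\ on every region containing at least one far variable, reserving the pure size estimate only for the all-near piece $I_1$ (where Moen's bound for an auxiliary $I_{\gamma_0,m_2}$ and the hypothesis $v_i^{-p_i'}\in\mathrm{RH}_m$ enter). Relatedly, for $p>n/\gamma$ the operator $I_{\gamma,m}\vec f$ itself need not converge for general $\vec f$ in the product of weighted Lebesgue spaces, so the statement must be read through the modified operator $J_{\gamma,m}$; a ``standard limiting argument'' from bounded compactly supported data does not by itself resolve this.

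Your sketch of $(1)\Rightarrow(2)$ is also missing its key ingredient. Testing with functions adapted to $B$ gives an upper bound for the oscillation, but to extract the full $\mathbb{H}_m$ quantity (an integral over all of $\mathbb{R}^n$ with the kernel $(|B|^{1/n}+|x_B-y|)^{-(n-\gamma_i+1/m)}$) one needs a pointwise \emph{lower} bound, with a fixed sign, for the kernel difference $K(x,\vec y)-K(z,\vec y)$; mere smoothness gives only an upper bound. The paper obtains this through the geometric configuration $A$, $C_1$, $C_2$ of Lemma~\ref{lema: diferencia de nucleos positiva}, together with a duality argument (the functionals $F_i^k$ and monotone convergence) to handle the components with $p_i=1$ and the passage to the supremum; some device of this kind is indispensable and is absent from your outline.
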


In the linear case the reverse Hölder condition on the weight is trivially satisfied, so our theorem is a well extension of the corresponding result in \cite{Pradolini01} for $p>n/\gamma$.

We have already observed that, although there is no restrictions on $\delta$ in the previous theorem, they arise as a consequence of the nature of the corresponding weights. The next result gives the range of parameters involved in the class defined above where the weights are trivial, that is, $v_i=\infty$ a.e. for some $i$ or $w=0$ a.e. 
	
	\begin{teo}\label{teo: no-ejemplos Hbb} Let $0<\gamma<mn$, $\delta\in\mathbb{R}$,  and $\vec{p}$ a vector of exponents. The following statements hold:
		\begin{enumerate}[\rm(a)]
		\item\label{item: teo no-ejemplos Hbb - item a} If $\delta>1$ or $\delta>\gamma-n/p$ then condition $\mathbb{H}_m(\vec{p},\gamma,\delta)$ is satisfied if and only if $v_i=\infty$ a.e. for some $1\le i\le m$.
		\item\label{item: teo no-ejemplos Hbb - item b} The same conclusion holds if $\delta=\gamma-n/p=1$.
		\item \label{item: teo no-ejemplos Hbb - item c}If $\delta<\gamma-mn$, then condition $\mathbb{H}_m(\vec{p},\gamma,\delta)$ is satisfied if and only if $v_i=\infty$ a.e. for some $1\le i\le m$ or $w=0$ a.e.
		\end{enumerate}	
	\end{teo}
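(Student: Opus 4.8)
\emph{The trivial implications and the set-up.} In each of the three items the ``if'' direction is immediate: if $v_i=\infty$ a.e.\ for some $i$ then $v_i^{-p_i'}=0$ a.e., so the $i$-th factor — hence the whole left-hand side of the inequality defining $\mathbb{H}_m(\vec p,\gamma,\delta)$ — vanishes, and if $w=0$ a.e.\ then $\|w\mathcal{X}_B\|_\infty=0$ for every $B$; so only the ``only if'' parts require proof. Assume then that $(w,\vec v)\in\mathbb{H}_m(\vec p,\gamma,\delta)$, with constant $C$, and argue by contradiction, supposing that $0<w<\infty$ a.e.\ and $0<v_i<\infty$ a.e.\ for every $i$ (the complement of the trivial cases, modulo the usual reduction showing that $\|w\mathcal{X}_B\|_\infty=\infty$ for some $B$, or $v_i^{-p_i'}\notin L^1_{\mathrm{loc}}$ near some point, already forces some $v_i\equiv\infty$). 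Write $s_i:=n-\gamma_i+\tfrac1m>0$, so the exponent of the $i$-th integral is $s_ip_i'$, and note $\sum_{i=1}^m s_i=mn-\gamma+1$ and $\sum_{i=1}^m(\gamma_i-\tfrac1m-\tfrac n{p_i})=\gamma-1-\tfrac np$; for $B=B(x_B,R)$ we have $|B|^{1/n}\asymp R$. Since each $v_i^{-p_i'}$ is positive on a set of positive measure, fix a ball $B_0=B(0,\rho_0)$, bounded sets $E_i\subseteq B_0$ with $|E_i|>0$, and constants $M_i>0$ with $v_i^{-p_i'}\ge M_i^{-p_i'}$ on $E_i$, arranged so that also $\|w\mathcal{X}_{B_0}\|_\infty>0$.

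\emph{Items (c) and the case $\delta>1$ of (a): soft arguments.} For (c), apply the defining inequality to $B=B(0,R)$ with $R\ge\rho_0$: then $\|w\mathcal{X}_B\|_\infty\ge\|w\mathcal{X}_{B_0}\|_\infty>0$, while for $y\in E_i$ one has $|y|\le R$, so the $i$-th integral is at least a constant multiple of $M_i^{-p_i'}|E_i|R^{-s_ip_i'}$; forming the product over $i$ and dividing by $|B|^{(\delta-1)/n}\asymp R^{\delta-1}$ gives $c_1R^{\gamma-mn-\delta}\le C$ for all $R\ge\rho_0$, with $c_1>0$, which is absurd as $R\to\infty$ when $\delta<\gamma-mn$. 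For the case $\delta>1$ of (a), let $x_0$ be a Lebesgue point of $w$, so $\|w\mathcal{X}_{B(x_0,R)}\|_\infty\ge\frac1{|B(x_0,R)|}\int_{B(x_0,R)}w\ge w(x_0)/2$ for all small $R$; integrating the $i$-th integrand only over $E_i$ (which lies at bounded distance from $x_0$) bounds the $i$-th factor below by a positive constant free of $R$, so the inequality forces $c_2R^{1-\delta}\le C$ for all small $R$, impossible when $\delta>1$. Hence in (a) it remains to treat $\delta>\gamma-n/p$, and we may assume $\delta\le1$.

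\emph{Item (a) when $\delta>\gamma-n/p$, and item (b): the delicate case.} Since the $v_i$ and $w$ are genuine weights, almost every point is simultaneously a Lebesgue point of $w$ with positive value and of every $v_i^{-p_i'}$ with positive value; fix such an $x_0$. For $y\in B(x_0,R)$ the $i$-th integrand has denominator $\asymp R^{s_ip_i'}$, and $\int_{B(x_0,R)}v_i^{-p_i'}\gtrsim R^n$, so the $i$-th factor is $\gtrsim R^{(n-s_ip_i')/p_i'}=R^{\gamma_i-1/m-n/p_i}$; multiplying over $i$, and using $\sum_i(\gamma_i-1/m-n/p_i)=\gamma-1-n/p$ together with $\|w\mathcal{X}_{B(x_0,R)}\|_\infty\ge w(x_0)/2$ and $|B(x_0,R)|^{(\delta-1)/n}\asymp R^{\delta-1}$, the defining inequality yields
\[
c_3\,R^{\gamma-n/p-\delta}\le C\qquad\text{for all small }R,\qquad c_3>0.
\]
If $\delta>\gamma-n/p$ the exponent is negative and letting $R\to0$ gives a contradiction, which finishes (a). In the endpoint case of (b), $\delta=\gamma-n/p=1$, the exponent above is $0$ and this is not enough; instead one lower-bounds the $i$-th integral by also retaining the contributions of the dyadic annuli $\{2^{-k}\le|x_0-y|\le 2^{-k+1}\}$ with $2^{-k}\ge R$, and since now $\sum_i(\gamma_i-1/m-n/p_i)=\gamma-1-n/p=0$ the logarithmically many scales combine into an extra factor comparable to a positive power of $\log(1/R)$ in the product, so the inequality becomes $c_4(\log\frac1R)^{\theta}\le C$ for some $\theta>0$ and all small $R$: a contradiction as $R\to0$. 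The main obstacle is exactly this last point — pinning down the precise decay rate of $\int_{\mathbb{R}^n}v_i^{-p_i'}(y)(|B|^{1/n}+|x_B-y|)^{-s_ip_i'}\,dy$ as the radius shrinks, which forces a careful summation over all dyadic scales below $1$, together with the logarithmic refinement needed to close the critical case (b), and the routine adjustments when some $p_i=1$, where the $i$-th integral is replaced by a supremum; by contrast (c) and the case $\delta>1$ of (a) merely test the class against arbitrarily large, resp.\ arbitrarily small, balls and are entirely elementary.
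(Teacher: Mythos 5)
Your proposal is essentially correct, and for item (c) and both halves of item (a) it runs along the same lines as the paper: test the $\mathbb{H}_m$ condition on arbitrarily large balls (for (c)) and on small balls centred at suitable points (for (a)); the paper phrases the small-ball step through Lebesgue points of $w^{-1}$ and monotone convergence, while you bound each factor from below directly, which is the same substance. Two small repairs are needed in your set-up: since $v_i^{-p_i'}$ need not be locally integrable, ``Lebesgue point of $v_i^{-p_i'}$ with positive value'' should be replaced by ``density point of a super-level set $\{v_i^{-p_i'}\ge 1/k\}$'', which is exactly what your estimate $\int_{B(x_0,R)}v_i^{-p_i'}\gtrsim R^n$ uses; and the contradiction hypothesis should be that $\bigcap_i\{v_i<\infty\}$ has positive measure (not full measure), which your argument handles verbatim and which yields the same intermediate conclusion $\prod_i v_i^{-1}=0$ a.e.\ that the paper reaches before passing, as the paper itself does, to ``$v_j=\infty$ a.e.\ for some $j$''.

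Where you genuinely diverge is the endpoint case (b). The paper first merges the $m$ factors by H\"older's inequality with $\alpha=\sum_i 1/p_i'$, so that the individual $\gamma_i$ cancel and the hypothesis becomes $\bigl(\int_{\mathbb{R}^n}(\prod_i v_i^{-1})^{1/\alpha}\,|x_B-y|^{-n}\,dy\bigr)^{\alpha}\lesssim w^{-1}(x_B)$, and then shows by a rearrangement/density argument on $E_{k_0}=\{\prod_i v_i^{-1}>1/k_0\}$ that this single integral diverges logarithmically, a contradiction. You instead keep the factors separate and sum dyadic annuli scale by scale in each one. This route does close the case, but your blanket claim that the scales ``combine into a positive power of $\log(1/R)$ because $\sum_i(\gamma_i-1/m-n/p_i)=0$'' is only literally true when every individual exponent $\mu_i=\gamma_i-1/m-n/p_i$ vanishes (then each factor with $p_i>1$ gains $(\log(1/R))^{1/p_i'}$, and some $p_i>1$ must occur since $\gamma<mn$). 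If the $\mu_i$ have mixed signs with zero sum, the dyadic sums produce instead a factor $R^{\sum_{\mu_i<0}\mu_i}$ with strictly negative exponent, which blows up even faster; the contradiction survives, but you need this one-line case distinction, and avoiding it is precisely what the paper's H\"older merge buys, at the price of the $\varepsilon(R)$ rearrangement trick.
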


We also exhibit non trivial examples of weights showing that the class is non empty (see \S~\ref{seccion: ejemplos}).

As we shall see if $w=\prod_{i=1}^m v_i$ and $\delta<\tau=(\gamma-mn)(1-1/m)+1/m$ the corresponding class, denoted by $\vec{v}\in \mathbb{H}_m({\vec{p},\gamma,\delta})$, is reduced to the 
$A_{\vec{p},\infty}$ condition. This class is  defined by the vectors $\vec{v}=(v_1,\dots,v_m)$, for which the following inequality 
\[\sup_{B\subset \mathbb{R}^n} \left\|\mathcal{X}_B\prod_{i=1}^m v_i\right\|_\infty\prod_{i=1}^m\left(\frac{1}{|B|}\int_B v_i^{-p_i'}\right)^{1/p_i'}<\infty\]
holds and it is the endpoint case of the $A_{\vec{p},q}$ classes defined in \cite{Moen09}. If $p_i=1$ for some $i$, the corresponding factor above must be understood as $\|v_i^{-1}\mathcal{X}_B\|_\infty$. When $m=1$ this inequality is equivalent to require $v^{-p'}\in A_1$, which is the expected condition (see for example \cite{Pradolini01}). 

The following result summarizes the discussion given above by showing that the  parameters $\delta$ and $p$ are restricted to a line.

\begin{teo}\label{teo: caso pesos iguales}
 Let $0<\gamma<mn$, $\delta\in\mathbb{R}$,  and $\vec{p}$ a vector of exponents. If $\vec{v}\in \mathbb{H}_m({\vec{p},\gamma,\delta})$, then $\delta=\gamma-n/p$. 
\end{teo}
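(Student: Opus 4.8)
The plan is to distill from the membership $(w,\vec v)\in\mathbb{H}_m(\vec p,\gamma,\delta)$, with $w=\prod_{i=1}^m v_i$, a single scalar inequality carrying a power of $|B|$, and then let the radius of $B$ run to $0$ and to $\infty$.

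First I would discard the tail of each integral occurring in the definition of $\mathbb{H}_m$. Since $0<\gamma_i<n$ one has $n-\gamma_i+1/m>0$, so for $y\in B$,
\[
(|B|^{1/n}+|x_B-y|)^{(n-\gamma_i+1/m)p_i'}\le (c_n|B|^{1/n})^{(n-\gamma_i+1/m)p_i'},
\]
and therefore
\[
\left(\int_{\mathbb{R}^n}\frac{v_i^{-p_i'}(y)}{(|B|^{1/n}+|x_B-y|)^{(n-\gamma_i+1/m)p_i'}}\,dy\right)^{1/p_i'}\gtrsim |B|^{1/p_i'-(n-\gamma_i+1/m)/n}\left(\frac1{|B|}\int_B v_i^{-p_i'}\right)^{1/p_i'}.
\]
Inserting this into the $\mathbb{H}_m$ inequality, taking the product over $i$, and collecting the powers of $|B|$ through the identities $\sum_i\gamma_i=\gamma$ and $\sum_i 1/p_i'=m-1/p$, one is led to
\[
\|w\mathcal{X}_B\|_\infty\,|B|^{(\gamma-\delta-n/p)/n}\prod_{i=1}^m\left(\frac1{|B|}\int_B v_i^{-p_i'}\right)^{1/p_i'}\le C'
\]
for every ball $B$, with $C'$ independent of $B$.

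The core of the argument is to show that, precisely because $w=\prod_{i=1}^m v_i$, the weight factor
\[
Q(B):=\|w\mathcal{X}_B\|_\infty\prod_{i=1}^m\left(\frac1{|B|}\int_B v_i^{-p_i'}\right)^{1/p_i'}
\]
satisfies $Q(B)\ge 1$ for every ball $B$. To this end I would apply the generalized Hölder inequality on $B$ equipped with normalized Lebesgue measure and exponents $p_1',\dots,p_m'$: with $s$ defined by $1/s=\sum_i 1/p_i'=m-1/p$ (a legitimate Hölder exponent, possibly smaller than $1$), this gives
\[
\prod_{i=1}^m\left(\frac1{|B|}\int_B v_i^{-p_i'}\right)^{1/p_i'}\ge\left(\frac1{|B|}\int_B w^{-s}\right)^{1/s},
\]
and since $w\le\|w\mathcal{X}_B\|_\infty$ a.e. on $B$, the right-hand side is at least $\|w\mathcal{X}_B\|_\infty^{-1}$; multiplying by $\|w\mathcal{X}_B\|_\infty$ yields $Q(B)\ge1$. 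The cases in which some $p_i=1$ (hence $p_i'=\infty$) are treated with the usual essential-supremum convention for the corresponding factor, under which the same chain of inequalities still holds.

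Combining the last two displays gives $|B|^{(\gamma-\delta-n/p)/n}\le C'$ for every ball $B$; letting $|B|\to\infty$ forces $\gamma-\delta-n/p\le0$, while letting $|B|\to0$ forces $\gamma-\delta-n/p\ge0$, so $\delta=\gamma-n/p$, as claimed. The step I expect to be the real obstacle is the lower bound $Q(B)\ge1$: the obvious attempt, bounding each average $\frac1{|B|}\int_B v_i^{-p_i'}$ below by the essential infimum of $v_i^{-p_i'}$ over $B$, produces a quantity at most $1$ and is useless; one must instead exploit that the Hölder exponent $s$ attached to the product can (and in general must) be taken below $1$, and then compare the resulting power mean of $w^{-1}$ with $\|w\mathcal{X}_B\|_\infty$. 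The exponent bookkeeping in the first step and the $p_i\in\{1,\infty\}$ conventions are routine by comparison.
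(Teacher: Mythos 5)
Your proposal is correct, and its skeleton is the same as the paper's: discard the tails of the integrals in the $\mathbb{H}_m(\vec p,\gamma,\delta)$ condition to reduce to the local condition \eqref{eq: condicion local}, exploit $w=\prod_{i=1}^m v_i$ to show that the remaining weight factor is bounded below by a positive constant, and then let $|B|\to 0$ and $|B|\to\infty$ to force the exponent $(\gamma-\delta-n/p)/n$ to vanish. The difference is in the cancellation step, and it is a genuine one. The paper writes $\|w\mathcal X_B\|_\infty^{-1}=\inf_B\prod_{i=1}^m v_i^{-1}$, bounds each factor $\|v_i^{-1}\mathcal X_B\|_\infty$ and $\bigl(\tfrac1{|B|}\int_B v_i^{-p_i'}\bigr)^{1/p_i'}$ below by $\inf_B v_i^{-1}$, and then invokes $\prod_i\inf_B v_i^{-1}\le\inf_B\prod_i v_i^{-1}$; as you yourself point out, replacing the averages by the pointwise infima is exactly the ``useless'' direction, since what is needed to cancel is the lower bound $\prod_{i\in\mathcal I_1}\|v_i^{-1}\mathcal X_B\|_\infty\prod_{i\in\mathcal I_2}\bigl(\tfrac1{|B|}\int_B v_i^{-p_i'}\bigr)^{1/p_i'}\ge\inf_B\prod_i v_i^{-1}$, which does not follow from comparing $\prod_i\inf_B v_i^{-1}$ with $\inf_B\prod_i v_i^{-1}$. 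Your generalized H\"older argument with $1/s=\sum_i 1/p_i'$ (allowing $s<1$, and the essential-supremum convention when $p_i'=\infty$) supplies precisely this missing inequality, i.e.\ $Q(B)\ge 1$, so your route not only reaches the same conclusion but also closes the step that the paper's write-up leaves telegraphic (indeed mis-directed as stated); the price is the use of H\"older with a sub-unitary exponent instead of the purely pointwise comparison the paper attempts. The exponent bookkeeping and the treatment of $p_i\in\{1,\infty\}$ in your reduction agree with the paper's passage from \eqref{eq: clase Hbarra(p,gamma,delta) - m} to \eqref{eq: condicion local}.
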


The theorem above proves that if $\delta=\gamma-n/p$, then  $\mathbb{H}_m({\vec{p},\gamma,\delta})\subset A_{\vec{p},\infty}$ and both classes coincide for $\delta<\tau$. When $m=1$ the result above was obtained in \cite{PRR21}.

The article is organized as follows. In \S~\ref{section: preliminares} we give some previous notation and properties of the classes of weights. In \S~\ref{section: resultados auxiliares} we study the behaviour of some operators related with $I_{\gamma,m}$ which will be useful in the proof of the main theorem, given in \S~\ref{section: prueba principal}. Finally, in \S~\ref{seccion: ejemplos} we exhibit examples of weights in the optimal range and prove the result dealing with the particular case of related weights.

\section{Preliminaries}\label{section: preliminares}

Throughout the paper $C$ will denote an absolute constant that may change in every occurrence. By $A\lesssim B$ we mean that there exists a positive constant $c$ such that $A\leq c B$.  We say that $A\approx B$ when $A\lesssim B$ and $B\lesssim A$. 

Let $m\in \mathbb{N}$. Given a set $E$, with $E^m$ we shall denote the cartesian product of $E$ $m$ times.

The multilinear fractional integral operator of order $0<\gamma<mn$ is defined by
\[I_{\gamma,m} \vec{f}(x)=\int_{(\mathbb{R}^n)^m} \frac{\prod_{i=1}^m f_i(y_i)}{(\sum_{i=1}^m|x-y_i|)^{mn-\gamma}}\,d\vec{y},\]
where $\vec{f}=(f_1,f_2,\dots, f_m)$ and $\vec{y}=(y_1,y_2,\dots, y_m)$.
It will be useful for us to consider the operator
\begin{equation}\label{eq: operador Jgamma,m}
J_{\gamma,m}\vec{f}(x)=\int_{(\mathbb{R}^n)^m} \left(\frac{1}{(\sum_{i=1}^m|x-y_i|)^{mn-\gamma}}-\frac{1-\mathcal{X}_{B(0,1)^m}(\vec{y})}{(\sum_{i=1}^m|y_i|)^{mn-\gamma}}\right)\prod_{i=1}^m f_i(y_i)\,d\vec{y}.
\end{equation}
which differs from $I_{\gamma,m}$ only by a constant term. This operator has the same Lipschitz norm as $I_{\gamma,m}$, so it will be enough to give the results for $J_{\gamma,m}$.

By a weight we understand any positive and locally integrable function. 

Given $\delta \in \mathbb{R}$ and a weight $w$ we say that a locally integrable function $f\in  \mathbb{L}_{w}(\delta)$ if there exists a positive constant $C$ such that 
\begin{equation}\label{eq: definicion clase Lipschitz norma inf}
\frac{\|w\mathcal{X}_B\|_\infty}{|B|^{1+\delta/n}}\int_B|f(x)-f_B|\,dx\leq C
\end{equation} 
for every ball $B$, where $f_B=|B|^{-1}\int_B f$. The smallest constant $C$ for which the inequality above holds will be denoted by $\|f\|_{\mathbb{L}_w(\delta)}$.

If $\delta=0$ the space $\mathbb{L}_{w}(\delta)$ coincides with a weighted version of BMO spaces introduced in \cite{Muckenhoupt-Wheeden74}. Concerning to the unweighted case, when $0<\delta<1$ these spaces are equivalent to the classical Lipschitz classes $\Lambda(\delta)$ given by the collection of functions $f$ satisfying $|f(x)-f(y)|\le C |x-y|^{\delta}$ and  they are Morrey spaces when $-n<\delta<0$. These classes of functions were also studied in \cite{Pradolini01}.

As we said in the introduction, the classes  $\mathbb{H}_m(\vec{p},\gamma,\delta)$ are given by the pairs $(w,\vec{v})$ for which the inequality
\begin{equation}\label{eq: clase Hbarra(p,gamma,delta) - m}
\sup_{B\subset \mathbb{R}^n} \frac{\|w\mathcal{X}_B\|_\infty}{|B|^{(\delta-1)/n}}\prod_{i=1}^m\left(\int_{\mathbb{R}^n} \frac{v_i^{-p_i'}(y)}{(|B|^{1/n}+|x_B-y|)^{(n-\gamma_i+1/m)p_i'}}\,dy\right)^{1/p_i'}<\infty 
\end{equation}
holds. For those index $i$ such that $p_i=1$ we understand the corresponding factor on the products above as
\begin{equation}\label{eq: factor de H para p_i=1}
\left\|\frac{v_i^{-1}}{(|B|^{1/n}+|x_B-\cdot|)^{(n-\gamma_i+1/m)}}\right\|_\infty.
\end{equation}
Let $\mathcal{I}_2=\{1\le i\le m: p_i=1\}$ and $\mathcal{I}_2=\{1,\dots, m\}\backslash \mathcal{I}_1$. Observe that  $(w,\vec{v})\in \mathbb{H}_m(\vec{p},\gamma,\delta)$ implies that the inequalities
\begin{equation}\label{eq: condicion local}
\frac{\|w\mathcal{X}_B\|_\infty}{|B|^{\delta/n-\gamma/n+1/p}}\prod_{i\in\mathcal{I}_1}\left\|v_i^{-1}\mathcal X_B\right\|_{\infty}\, \prod_{i\in\mathcal{I}_2}\left(\frac{1}{|B|}\int_{B} v_i^{-p_i'}(y)\,dy\right)^{1/p_i'}\leq C
\end{equation}
and
\begin{equation}\label{eq: condicion global}
\frac{\|w\mathcal{X}_B\|_\infty}{|B|^{(\delta-1)/n}}\prod_{i\in\mathcal{I}_1}\left\|\frac{v_i^{-1}\mathcal X_{\mathbb{R}^n\backslash B}}{|x_B-\cdot|^{(n-\gamma_i+1/m)}}\right\|_{\infty}\, \prod_{i\in\mathcal{I}_2}\left(\int_{\mathbb{R}^n\backslash B} \frac{v_i^{-p_i'}(y)}{|x_B-y|^{(n-\gamma_i+1/m)p_i'}}\,dy\right)^{1/p_i'}\leq C
\end{equation} 
holds for every ball $B$. We shall refer to these inequalities as the \textit{local} and the \textit{global} conditions, respectively.

On the other hand, under certain properties on $\vec{v}$, the corresponding local and global conditions imply \eqref{eq: clase Hbarra(p,gamma,delta) - m}. Before state and prove this result, we shall introduce some useful notation.

	Given $m\in\mathbb{N}$ we denote $S_m=\{0,1\}^m$. Given a set $B$ and $\sigma\in S_m$, $\sigma=(\sigma_1,\sigma_2,\dots,\sigma_m)$ we define 
	\[B^{\sigma_i}=\left\{
	\begin{array}{ccl}
	B,&\textrm{ if }&\sigma_i=1\\
	\mathbb{R}^n\backslash B,&\textrm{ if }&\sigma_i=0.
	\end{array}
	\right.\]
	
	With the notation $\mathbf{B}^\sigma$ we will understand the cartesian product $B^{\sigma_1}\times B^{\sigma_2}\times\dots\times B^{\sigma_m}$. In particular, if we set $\mathbf{1}=(1,1,\dots,1)$ and $\mathbf{0}=(0,0,\dots,0)$ then we have
	\[\mathbf{B}^{\mathbf{1}}=B\times B\times\dots\times B=B^m,\quad\textrm{ and }\quad \mathbf{B}^{\mathbf{0}}=(\mathbb{R}^n\backslash B)\times (\mathbb{R}^n\backslash B)\times \dots\times (\mathbb{R}^n\backslash B)=(\mathbb{R}^n\backslash B)^m.\]
We recall that a weight $w$ belongs to the \textit{reverse H\"{o}lder} class $\mathrm{RH}_s$, $1<s<\infty$, if there exists a positive constant $C$ such that the inequality
\[\left(\frac{1}{|B|}\int_B w^s\right)^{1/s}\leq \frac{C}{|B|}\int_B w\]
holds for every ball $B$ in $\mathbb{R}^n$. The smallest constant for which the inequality above holds is denoted by $[w]_{\mathrm{RH}_s}$. It is not difficult to see that $\mathrm{RH}_t\subset \mathrm{RH}_s$ whenever $1<s<t$. We say that $w\in \mathrm{RH}_{\infty}$ if 
\[\sup_B w\le  \frac{C}{|B|}\int_B w,\]
for some positive constant $C$.	It is well known that any radial power function $|\cdot|^\alpha$, with $\alpha>0$ satisfies $\mathrm{RH}_\infty$ condition.

\begin{lema}\label{lema: equivalencia con local y global}
Let $0<\gamma<mn$, $\delta\in\mathbb{R}$, $\vec{p}$ a vector of exponents and $(w,\vec{v})$ a pair of weights such that $v_i^{-1}\in\mathrm{RH}_\infty$ for $i\in\mathcal{I}_1$ and $v_i^{-p_i'}$ is doubling for $i\in\mathcal{I}_2$. Then, condition $\mathbb{H}_m(\vec{p},\gamma,\delta)$ is equivalent to \eqref{eq: condicion global}.

\end{lema}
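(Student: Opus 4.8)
The implication ``$\mathbb{H}_m(\vec{p},\gamma,\delta)\Rightarrow$ \eqref{eq: condicion global}'' is the one already noted before the statement (restrict each integral, or essential supremum, in \eqref{eq: clase Hbarra(p,gamma,delta) - m} to $\mathbb{R}^n\setminus B$ and use $|B|^{1/n}+|x_B-y|\approx|x_B-y|$ there), so the task is the converse. The plan is to fix a ball $B$ and split every factor in \eqref{eq: clase Hbarra(p,gamma,delta) - m} into a \emph{local} and a \emph{global} part, according to whether the variable lies in $B$ or in $\mathbb{R}^n\setminus B$, using the elementary estimates $|B|^{1/n}+|x_B-y|\approx|B|^{1/n}$ on $B$ and $|B|^{1/n}+|x_B-y|\approx|x_B-y|$ on $\mathbb{R}^n\setminus B$. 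For $i\in\mathcal{I}_2$ this gives
\[\left(\int_{\mathbb{R}^n}\frac{v_i^{-p_i'}(y)}{(|B|^{1/n}+|x_B-y|)^{(n-\gamma_i+1/m)p_i'}}\,dy\right)^{1/p_i'}\approx L_i(B)+G_i(B),\]
where $G_i(B)=\left(\int_{\mathbb{R}^n\setminus B}v_i^{-p_i'}(y)|x_B-y|^{-(n-\gamma_i+1/m)p_i'}\,dy\right)^{1/p_i'}$ is the $i$-th factor of \eqref{eq: condicion global} and $L_i(B)=|B|^{-(n-\gamma_i+1/m)/n}\left(\int_B v_i^{-p_i'}\right)^{1/p_i'}$ is the $i$-th factor of \eqref{eq: condicion local}; for $i\in\mathcal{I}_1$ the same splitting of \eqref{eq: factor de H para p_i=1} yields $\max\{L_i(B),G_i(B)\}\approx L_i(B)+G_i(B)$ with $L_i(B)=|B|^{-(n-\gamma_i+1/m)/n}\|v_i^{-1}\mathcal{X}_B\|_\infty$ and $G_i(B)$ the corresponding global factor. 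Thus the left-hand side of \eqref{eq: clase Hbarra(p,gamma,delta) - m}, evaluated at $B$, is comparable to $\frac{\|w\mathcal{X}_B\|_\infty}{|B|^{(\delta-1)/n}}\prod_{i=1}^m\bigl(L_i(B)+G_i(B)\bigr)$.

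Expanding the product over the $2^m$ subsets of $\{1,\dots,m\}$ --- equivalently over $\sigma\in S_m$, the $\sigma$-term being the one attached to the region $\mathbf{B}^\sigma$ --- the whole matter reduces to the pointwise comparison
\[L_i(B)\lesssim G_i(B)\qquad (1\le i\le m),\]
with a constant independent of $B$. Granting it, $\prod_{i=1}^m(L_i(B)+G_i(B))\lesssim\prod_{i=1}^m G_i(B)$, so the quantity above is controlled by $\frac{\|w\mathcal{X}_B\|_\infty}{|B|^{(\delta-1)/n}}\prod_{i=1}^m G_i(B)$, which is exactly the left-hand side of \eqref{eq: condicion global} and hence bounded by a fixed constant; taking the supremum over all balls $B$ gives $\mathbb{H}_m(\vec{p},\gamma,\delta)$.

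To prove $L_i(B)\lesssim G_i(B)$ I would restrict the integral (resp. the essential supremum) defining $G_i(B)$ to the annulus $2B\setminus B$, where $|x_B-y|\approx|B|^{1/n}$; this produces $G_i(B)\gtrsim|B|^{-(n-\gamma_i+1/m)/n}\left(\int_{2B\setminus B}v_i^{-p_i'}\right)^{1/p_i'}$ for $i\in\mathcal{I}_2$ and $G_i(B)\gtrsim|B|^{-(n-\gamma_i+1/m)/n}\|v_i^{-1}\mathcal{X}_{2B\setminus B}\|_\infty$ for $i\in\mathcal{I}_1$, so everything comes down to comparing the size of the weight on $B$ with its size on $2B\setminus B$ --- and this is the only point at which the hypotheses on $\vec{v}$ are used. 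For $i\in\mathcal{I}_2$, the doubling weight $v_i^{-p_i'}$ enjoys the reverse doubling property: there is a ball $B'\subset 2B\setminus B$ with radius comparable to that of $B$ and with $B\subset cB'$ for an absolute $c$, so iterated doubling gives $\int_B v_i^{-p_i'}\lesssim\int_{B'}v_i^{-p_i'}\le\int_{2B\setminus B}v_i^{-p_i'}$, i.e. $L_i(B)\lesssim G_i(B)$. For $i\in\mathcal{I}_1$, $v_i^{-1}\in\mathrm{RH}_\infty$ is in particular doubling, so the same reverse doubling estimate gives $\int_B v_i^{-1}\lesssim\int_{2B\setminus B}v_i^{-1}$; combined with the defining inequality $\|v_i^{-1}\mathcal{X}_B\|_\infty\lesssim|B|^{-1}\int_B v_i^{-1}$ of $\mathrm{RH}_\infty$ and the trivial bound $|B|^{-1}\int_{2B\setminus B}v_i^{-1}\lesssim\|v_i^{-1}\mathcal{X}_{2B\setminus B}\|_\infty$, this yields $\|v_i^{-1}\mathcal{X}_B\|_\infty\lesssim\|v_i^{-1}\mathcal{X}_{2B\setminus B}\|_\infty$, again $L_i(B)\lesssim G_i(B)$.

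The main obstacle is precisely this last comparison --- extracting the reverse doubling inequality from the doubling (respectively $\mathrm{RH}_\infty$) hypothesis and, for the indices in $\mathcal{I}_1$, passing correctly between essential suprema and averages of $v_i^{-1}$. Once it is in hand, the local/global splitting and the expansion of the product are routine bookkeeping.
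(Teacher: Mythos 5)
Your proposal is correct and follows essentially the same route as the paper's proof: split each factor of $\mathbb{H}_m(\vec{p},\gamma,\delta)$ into its local and global parts, expand the product, and dominate every local factor by the corresponding global one by restricting to the annulus $2B\setminus B$, using the doubling of $v_i^{-p_i'}$ for $i\in\mathcal{I}_2$ and the $\mathrm{RH}_\infty$ condition (via $\|v_i^{-1}\mathcal{X}_B\|_\infty\lesssim |B|^{-1}\int_B v_i^{-1}\lesssim |B|^{-1}\int_{2B\setminus B}v_i^{-1}\lesssim\|v_i^{-1}\mathcal{X}_{2B\setminus B}\|_\infty$) for $i\in\mathcal{I}_1$. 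The only cosmetic difference is that you make explicit the ``reverse doubling'' step $\int_B\lesssim\int_{2B\setminus B}$ (and its use for the $\mathcal{I}_1$ indices), which the paper invokes implicitly in the same chain of inequalities.
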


\begin{proof}
	We have already seen that $\mathbb{H}_m(\vec{p},\gamma,\delta)$ implies  \eqref{eq: condicion global}. Let $\theta_i=n-\gamma_i+1/m$, for every $i$. If $m_2=m-m_1$ where $m_1=\#\mathcal{I}_1$, the cardinal of $\mathcal{I}_1$, after a possible rename of the index $i\in\mathcal{I}_2$ we have  that
	\[\prod_{i\in\mathcal{I}_2}\left(\int_{\mathbb{R}^n} \frac{v_i^{-p_i'}}{(|B|^{1/n}+|x_B-\cdot|)^{\theta_ip_i'}}\right)^{1/p_i'}=\sum_{\sigma\in S_{m_2}}\prod_{i=1}^{m_0}\left( \int_{B^{\sigma_i}}\frac{v_i^{-p_i'}}{(|B|^{1/n}+|x_B-\cdot|)^{\theta_ip_i'}}\right)^{1/p_i'}.\]
	
	Fix $\sigma\in S_{m_2}$. If $\sigma_i=0$, we have that
	\begin{align*}
	\left(\int_{B^{\sigma_i}}\frac{v_i^{-p_i'}}{(|B|^{1/n}+|x_B-\cdot|)^{(n-\gamma_i+1/m)p_i'}}\right)^{1/p_i'}&=\left(\int_{\mathbb{R}^n\backslash B}\frac{v_i^{-p_i'}}{(|B|^{1/n}+|x_B-\cdot|)^{(n-\gamma_i+1/m)p_i'}}\right)^{1/p_i'}\\
	&\leq \left(\int_{\mathbb{R}^n\backslash B}\frac{v_i^{-p_i'}(y)}{|x_B-y|^{(n-\gamma_i+1/m)p_i'}}\,dy\right)^{1/p_i'}.
	\end{align*}
	
	For $\sigma_i=1$, since $v_i^{-p_i'}$ is doubling, we have that 
	\begin{align*}
	\left(\int_{B^{\sigma_i}}\frac{v_i^{-p_i'}(y)}{(|B|^{1/n}+|x_B-y|)^{(n-\gamma_i+1/m)p_i'}}\,dy\right)^{1/p_i'}&=\left(\int_B\frac{v_i^{-p_i'}(y)}{(|B|^{1/n}+|x_B-y|)^{(n-\gamma_i+1/m)p_i'}}\,dy\right)^{1/p_i'}\\
	&\leq \frac{1}{|B|^{1-\gamma_i/n+1/(mn)}}\left(\int_B v_i^{-p_i'}\right)^{1/p_i'}\\
	& \lesssim \frac{1}{|2B|^{1-\gamma_i/n+1/(mn)}}\left(\int_{2B\backslash B} v_i^{-p_i'}\right)^{1/p_i'}\\
	&\leq\left(\int_{2B\backslash B}\frac{v_i^{-p_i'}(y)}{|x_B-y|^{(n-\gamma_i+1/m)p_i'}}\right)^{1/p_i'}\\
	&\leq\left(\int_{\mathbb{R}^n\backslash B}\frac{v_i^{-p_i'}(y)}{|x_B-y|^{(n-\gamma_i+1/m)p_i'}}\right)^{1/p_i'}.
	\end{align*}
	Therefore, for every $\sigma\in S_{m_2}$ we obtain
	\[\prod_{i=1}^{m_2}\left( \int_{B^{\sigma_i}}\frac{v_i^{-p_i'}}{(|B|^{1/n}+|x_B-\cdot|)^{\theta_ip_i'}}\right)^{1/p_i'}\lesssim \prod_{i\in\mathcal{I}_2}\left( \int_{\mathbb{R}^n\backslash B}\frac{v_i^{-p_i'}}{|x_B-\cdot|^{\theta_ip_i'}}\right)^{1/p_i'}.\]
	On the other hand, when $i\in\mathcal{I}_1$ we can follow a similar argument with the integral replaced by $\|\cdot\|_\infty$. Indeed, since $v_i^{-1}\in \mathrm{RH}_\infty$ observe that
	\[\left\|v^{-1}\mathcal{X}_B\right\|_\infty\leq \frac{\left[v^{-1}\right]_{\mathrm{RH}_\infty}}{|B|}\int_B v^{-1}\leq \frac{C}{|B|}\int_{2B\backslash B} v^{-1}\leq C\left\|v^{-1}\mathcal{X}_{2B\backslash B}\right\|_\infty.\]
	This allows us to estimate as follows
	\[\prod_{i\in\mathcal{I}_1} \left\|\frac{v_i^{-1}}{(|B|^{1/n}+|x_B-\cdot|)^{n-\gamma/m+1/m}}\right\|_\infty\lesssim \prod_{i\in\mathcal{I}_1} \left\|\frac{v_i^{-1}\mathcal{X}_{\mathbb{R}^n\backslash B}}{|x_B-\cdot|^{n-\gamma/m+1/m}}\right\|_\infty.\]
	Therefore, by applying condition \eqref{eq: condicion global}  we have that
	\[\frac{\|w\mathcal{X}_B\|_\infty}{|B|^{(\delta-1)/n}}\prod_{i\in\mathcal{I}_1} \left\|\frac{v_i^{-1}}{(|B|^{1/n}+|x_B-\cdot|)^{n-\gamma/m+1/m}}\right\|_\infty\, \prod_{i\in\mathcal{I}_2}\left(\int_{\mathbb{R}^n} \frac{v_i^{-p_i'}}{(|B|^{1/n}+|x_B-\cdot|)^{\theta_ip_i'}}\right)^{1/p_i'}\leq C,\]
	as desired.
\end{proof}

\begin{coro}
	Under the hypotheses of Lemma~\ref{lema: equivalencia con local y global} we have that conditions \eqref{eq: condicion global}  implies \eqref{eq: condicion local}. 
\end{coro}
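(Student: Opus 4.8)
The plan is to obtain the implication by composing two facts already available in the excerpt. First, Lemma~\ref{lema: equivalencia con local y global} asserts that, under the standing hypotheses $v_i^{-1}\in\mathrm{RH}_\infty$ for $i\in\mathcal{I}_1$ and $v_i^{-p_i'}$ doubling for $i\in\mathcal{I}_2$, the global condition \eqref{eq: condicion global} is equivalent to membership in $\mathbb{H}_m(\vec{p},\gamma,\delta)$; in particular it implies $(w,\vec{v})\in\mathbb{H}_m(\vec{p},\gamma,\delta)$. Second, $\mathbb{H}_m(\vec{p},\gamma,\delta)$ always implies the local condition \eqref{eq: condicion local}: this is the observation recorded right after the definition \eqref{eq: clase Hbarra(p,gamma,delta) - m}, and it follows by restricting the domain of integration in \eqref{eq: clase Hbarra(p,gamma,delta) - m} (resp. the $L^\infty$ norm in \eqref{eq: factor de H para p_i=1}) to the ball $B$ and using that $|B|^{1/n}+|x_B-y|\approx|B|^{1/n}$ for $y\in B$, after which the powers of $|B|$ rearrange to the exponent in \eqref{eq: condicion local}. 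Chaining the two yields \eqref{eq: condicion global} $\Rightarrow \mathbb{H}_m(\vec{p},\gamma,\delta)\Rightarrow$ \eqref{eq: condicion local}, which is the claim.

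Alternatively, one can argue directly, mirroring the proof of Lemma~\ref{lema: equivalencia con local y global} but reversing the direction of the estimates. For $i\in\mathcal{I}_2$, since $v_i^{-p_i'}$ is doubling we have $\int_B v_i^{-p_i'}\lesssim\int_{2B\backslash B}v_i^{-p_i'}$ (a doubling measure gives the annulus $2B\backslash B$ a fixed fraction of the mass of $B$, because $B$ lies inside a fixed dilate of any ball of radius $|B|^{1/n}/2$ contained in the annulus); since moreover $|x_B-y|\approx|B|^{1/n}$ on $2B\backslash B\subset\mathbb{R}^n\backslash B$, this gives
\[
\left(\frac{1}{|B|}\int_B v_i^{-p_i'}\right)^{1/p_i'}\lesssim |B|^{(n-\gamma_i+1/m)/n-1/p_i'}\left(\int_{\mathbb{R}^n\backslash B}\frac{v_i^{-p_i'}(y)}{|x_B-y|^{(n-\gamma_i+1/m)p_i'}}\,dy\right)^{1/p_i'}.
\]
For $i\in\mathcal{I}_1$ the same manipulation with $\|\cdot\|_\infty$ in place of the integral works, the doubling step being replaced by $\|v_i^{-1}\mathcal{X}_B\|_\infty\lesssim\|v_i^{-1}\mathcal{X}_{2B\backslash B}\|_\infty$, which is exactly the displayed $\mathrm{RH}_\infty$ inequality appearing in the proof of the lemma. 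Multiplying these over all $i$ and inserting the outcome into \eqref{eq: condicion global}, the powers of $|B|$ collect, via $\sum_{i}(n-\gamma_i+1/m)=mn-\gamma+1$ and $\sum_i 1/p_i'=m-1/p$, precisely into the exponent $\delta/n-\gamma/n+1/p$ of \eqref{eq: condicion local}, producing that inequality.

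I do not anticipate a real obstacle: once Lemma~\ref{lema: equivalencia con local y global} is in hand the statement is bookkeeping. The only substantive point — and it is exactly where the hypotheses on $\vec{v}$ enter — is the transfer of mass (or of $L^\infty$ norm) from the annulus $2B\backslash B$ back onto $B$; without the doubling / $\mathrm{RH}_\infty$ assumptions this transfer fails and the implication need not hold. Matching the powers of $|B|$ is routine arithmetic and will be carried out in full in the write-up.
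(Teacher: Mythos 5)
Your first argument is exactly the paper's intended proof: the corollary is stated with no separate argument precisely because Lemma~\ref{lema: equivalencia con local y global} gives \eqref{eq: condicion global} $\Rightarrow \mathbb{H}_m(\vec{p},\gamma,\delta)$, and the observation following \eqref{eq: clase Hbarra(p,gamma,delta) - m} gives $\mathbb{H}_m(\vec{p},\gamma,\delta)\Rightarrow$ \eqref{eq: condicion local} by restricting to $B$ where $|B|^{1/n}+|x_B-\cdot|\approx|B|^{1/n}$. Your alternative direct argument is also fine, but it amounts to re-running the annulus estimates already contained in the proof of the lemma, so nothing new is needed.
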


The following lemma is a local-to-global result for the condition $\mathbb{H}_m(\vec{p},\gamma,\delta)$. It will be useful in order to give examples of weights. We shall assume that $\gamma_i= \gamma/m$, for every $i$.

\begin{lema}\label{lema: local implica global en Hbarra}
Let $0<\gamma<mn$, $\delta<\tau=(\gamma-mn)(1-1/m)+1/m$, $\vec{p}$ a vector of exponents and $(w,\vec{v})$ a pair of weights satisfying condition \eqref{eq: condicion local}. Then $(w,\vec{v})$ satisfies \eqref{eq: condicion global}.
\end{lema}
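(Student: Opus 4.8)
The plan is to deduce \eqref{eq: condicion global} from the local condition \eqref{eq: condicion local} by splitting $\mathbb{R}^n\setminus B$ into dyadic annuli and estimating the contribution of each one through \eqref{eq: condicion local} applied at the corresponding dilate of $B$. It is convenient first to rewrite the local condition: multiplying \eqref{eq: condicion local} through by $|B|^{m-1/p}$ (note $m-1/p=\sum_{i\in\mathcal I_2}1/p_i'$), it becomes equivalent to
\[
\|w\mathcal{X}_B\|_\infty\,\prod_{i\in\mathcal I_1}\|v_i^{-1}\mathcal{X}_B\|_\infty\,\prod_{i\in\mathcal I_2}\Big(\int_B v_i^{-p_i'}\Big)^{1/p_i'}\;\lesssim\;|B|^{\beta/n},\qquad \beta:=\delta-\gamma+mn,
\]
and, since $\gamma_i=\gamma/m$ for every $i$, the exponent appearing in \eqref{eq: condicion global} is $\theta:=n-\gamma/m+1/m$. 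One checks the bookkeeping identities $\tau=\gamma-mn+\theta$ and $\delta-1=\beta-m\theta$, so the hypothesis $\delta<\tau$ is precisely $\beta<\theta$, and one also has $\delta<1$.

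Next I would localize each tail factor. Writing $r=|B|^{1/n}$ and $A_k=2^{k+1}B\setminus 2^kB$, so that $|x_B-y|\approx 2^kr$ on $A_k$, subadditivity of $t\mapsto t^{1/p_i'}$ gives for $i\in\mathcal I_2$
\[
\Big(\int_{\mathbb{R}^n\setminus B}\frac{v_i^{-p_i'}(y)}{|x_B-y|^{\theta p_i'}}\,dy\Big)^{1/p_i'}\;\lesssim\;\sum_{k\ge 0}\frac{1}{(2^kr)^{\theta}}\Big(\int_{2^{k+1}B}v_i^{-p_i'}\Big)^{1/p_i'},
\]
and analogously, for $i\in\mathcal I_1$, $\big\|\,v_i^{-1}\mathcal{X}_{\mathbb{R}^n\setminus B}\,|x_B-\cdot|^{-\theta}\big\|_\infty\lesssim\sup_{k\ge 0}(2^kr)^{-\theta}\|v_i^{-1}\mathcal{X}_{2^{k+1}B}\|_\infty$. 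Plugging these into \eqref{eq: condicion global} and using $|B|^{(1-\delta)/n}\approx r^{1-\delta}$, the matter is reduced to proving
\[
\|w\mathcal{X}_B\|_\infty\,r^{1-\delta}\;\prod_{i\in\mathcal I_1}\sup_{k}\frac{\|v_i^{-1}\mathcal{X}_{2^{k+1}B}\|_\infty}{(2^kr)^{\theta}}\;\prod_{i\in\mathcal I_2}\sum_{k}\frac{\big(\int_{2^{k+1}B}v_i^{-p_i'}\big)^{1/p_i'}}{(2^kr)^{\theta}}\;\lesssim\;1 .
\]

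The heart of the argument is to control this product. I would expand the product of the $m$ sums (reading a supremum as an $\ell^\infty$ norm) as a single sum over multi-indices $\vec k=(k_1,\dots,k_m)$, and, for each $\vec k$, invoke the reformulated local condition at the balls $2^{k_i+1}B$; this is legitimate because $\int_{(\cdot)B}v_i^{-p_i'}$ and $\|w\mathcal{X}_{(\cdot)B}\|_\infty$ are nondecreasing in the dilation parameter, so that the product bound (namely, the product of the $m$ quantities entering \eqref{eq: condicion local}) at the ball $2^{j+1}B$ is $\lesssim(2^jr)^{\beta}/\|w\mathcal{X}_{2^{j+1}B}\|_\infty\le(2^jr)^{\beta}/\|w\mathcal{X}_B\|_\infty$ at every scale $j$. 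Reorganising the sum over $\vec k$ according to the scale at which each factor is evaluated and pushing the exponents through by means of $\delta-1=\beta-m\theta$ and $\sum_{i\in\mathcal I_2}1/p_i'=m-1/p$, the whole expression collapses to $\|w\mathcal{X}_B\|_\infty^{-1}\,r^{\delta-1}$ times a geometric series whose common ratio is a negative power of $2$ exactly because $\beta<\theta$; summing and cancelling the $r^{1-\delta}$ and $\|w\mathcal{X}_B\|_\infty$ factors yields the bound $\lesssim 1$.

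The delicate point — the step I expect to demand the most care — is disentangling the product of the $m$ tail pieces without spoiling the sharp exponent: if in each multi-index term one simply bounds all $m$ factors at the largest scale present and sums, one only gets convergence under the stronger condition $\delta<\gamma-mn$ (i.e. $\beta<0$). To reach the full range $\delta<\tau$ one must keep each factor at its own dyadic scale $2^{k_i+1}B$, apply \eqref{eq: condicion local} there, and sum the resulting $m$-fold geometric series in $(k_1,\dots,k_m)$; the exponent arithmetic, though routine, has to be carried out with attention, and one must verify that all implicit constants depend only on $m$, $n$, $\gamma$, $\delta$ and $\vec p$.
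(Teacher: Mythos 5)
Your reduction is fine and matches the paper's set-up: the dyadic-annuli localization of the tails, the reformulation of \eqref{eq: condicion local} with $\beta=\delta-\gamma+mn$, and the bookkeeping identities $\tau=\gamma-mn+\theta$, $\delta-1=\beta-m\theta$ (so $\delta<\tau$ is $\beta<\theta$) are all correct. The gap lies precisely in the step you single out as the heart of the argument. Writing $A_i(k)$ for the $i$-th weight factor taken over $2^{k+1}B$ (the $L^\infty$ norm if $i\in\mathcal I_1$, the integral to the power $1/p_i'$ if $i\in\mathcal I_2$), condition \eqref{eq: condicion local} controls only the \emph{product} $\|w\mathcal{X}_Q\|_\infty\prod_{l=1}^m A_l$ over a single ball $Q$; it gives no bound on an individual factor, since $A_i$ may be arbitrarily large on $2^{k_i+1}B$ as long as the remaining factors compensate. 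Hence the instruction ``keep each factor at its own dyadic scale $2^{k_i+1}B$ and apply \eqref{eq: condicion local} there'' cannot be executed factor by factor, and your intermediate observation (the common-scale bound $\prod_{l}A_l(j)\lesssim (2^jr)^{\beta}/\|w\mathcal{X}_B\|_\infty$ for every single $j$) does not by itself control the mixed-scale product $\prod_i A_i(k_i)$; the passage from the former to the latter is exactly what is missing.

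Moreover, the route you dismiss is in fact the sharp one, and it is the paper's proof: for each $\vec k$ set $k^*=\max_i k_i$, use monotonicity to replace every \emph{weight} factor $A_i(k_i)$ by $A_i(k^*)$, apply \eqref{eq: condicion local} once at $2^{k^*+1}B$ together with $\|w\mathcal{X}_{2^{k^*+1}B}\|_\infty\ge\|w\mathcal{X}_B\|_\infty$, but keep each \emph{kernel} factor $(2^{k_i}r)^{-\theta}$ at its own scale. Since $\theta=n-\gamma/m+1/m>0$, summing over the non-maximal indices $k_i\le k^*$ contributes only a bounded geometric factor, and what remains is $\sum_{k^*}2^{k^*(\beta-\theta)}$, which is finite exactly when $\beta<\theta$, i.e.\ $\delta<\tau$. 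So bounding the weight factors at the largest scale does \emph{not} degrade the range to $\delta<\gamma-mn$; that loss would occur only if one also discarded the dyadic decay of the kernel factors. With this correction your argument becomes essentially the proof in the paper, which organizes the sum over $\mathbb{N}^m$ by the sets $K_i$ on which the $i$-th coordinate is maximal.
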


\begin{proof}
	Let $\theta=n-\gamma/m+1/m$. Fix a ball $B$ and set $B_k=2^k B$, for every $k\in \mathbb{N}$. If $i\in\mathcal{I}_1$ we have that
	\begin{align*}
	\left\|\frac{v_i^{-1}\mathcal{X}_{\mathbb{R}^n\backslash B}}{|x_B-\cdot|^{\theta}}\right\|_\infty &\leq \sum_{k=1}^\infty \left\|\frac{v_i^{-1}\mathcal{X}_{B_{k+1}\backslash B_k}}{|x_B-\cdot|^{\theta}}\right\|_\infty\\
	&\leq C \sum_{k=1}^\infty |B_k|^{-\theta/n}\left\|v_i^{-1}\mathcal{X}_{B_{k+1}}\right\|_\infty.
	\end{align*}
	On the other hand, for $i\in\mathcal{I}_2$
\begin{align*}
\left(\int_{\mathbb{R}^n\backslash B} \frac{v_i^{-p_i'}(y)}{|x_B-y|^{\theta p_i'}}\,dy\right)^{1/p_i'}&\leq \left(\sum_{k=1}^\infty\int_{B_{k+1}\backslash B_k} \frac{v_i^{-p_i'}(y)}{|x_B-y|^{\theta p_i'}}\,dy\right)^{1/p_i'}\\
&\leq C\sum_{k=1}^\infty |B_k|^{-\theta/n}\left(\int_{B_{k+1}}v_i^{-p_i'}\right)^{1/p_i'}.
\end{align*}
	
	If we set $\vec{k}=(k_1,k_2.\dots,k_m)$, the left-hand side of \eqref{eq: condicion global} can be bounded by 
	\[C\sum_{\vec{k}\in\mathbb{N}^m} \prod_{i\in\mathcal{I}_1} |B_{k_i}|^{-\theta/n}\left\|v_i^{-1}\mathcal{X}_{B_{k_i+1}}\right\|_\infty\, \prod_{i\in\mathcal{I}_2} |B_{k_i}|^{-\theta/n}\left(\int_{B_{k_i+1}}v_i^{-p_i'}\right)^{1/p_i'}=C\sum_{\vec{k}\in\mathbb{N}^m}I\left(B,\vec{k}\right).\]
	Observe that $\mathbb{N}^m\subset \bigcup_{i=1}^m K_i,$
	where $K_i=\{\vec{k}=(k_1,k_2,\dots,k_m): k_i\geq k_j \textrm{ for every }j\}$. Let us estimate the sum over $K_1$, being similar for the other sets. Therefore 
		\begin{align*}
	\sum_{\vec{k}\in K_1} I\left(B,\vec{k}\right)&\leq  \sum_{k_1=1}^\infty|B_{k_1}|^{-\tfrac{\theta}{n}}\prod_{i\in\mathcal{I}_1}\left\|v_i^{-1}\mathcal{X}_{B_{k_1+1}}\right\|_\infty\,\prod_{i\in\mathcal{I}_2}\left(\int_{B_{k_1+1}}v_i^{-p_i'}\right)^{1/p_i'} \prod_{i\neq 1}\sum_{k_i=1}^{k_1}|B_{k_i}|^{-\tfrac{\theta}{n}}.
	\end{align*}
	Notice that
	\[\sum_{k_i=1}^{k_1}|B_{k_i}|^{-\theta/n}=|B|^{-\theta/n}\sum_{k_i=1}^{k_1} 2^{-k_i\theta}\lesssim |B_{k_1}|^{-\theta/n}\sum_{k_i=1}^{k_1}2^{(k_1-k_i)\theta}\lesssim |B_{k_1}|^{-\theta/n}2^{k_1\theta}.\]
	
	Thus, from the estimation above  and \eqref{eq: condicion local} we obtain that
	\begin{align*}
	\frac{\|w\mathcal{X}_B\|_\infty}{|B|^{(\delta-1)/n}}\sum_{\vec{k}\in K_1} I\left(B,\vec{k}\right)&\leq \frac{C}{|B|^{(\delta-1)/n}}\sum_{k_1=1}^\infty 2^{(m-1)k_1\theta}|B_{k_1}|^{-m\theta/n}\|w\mathcal{X}_{B_{k_1+1}}\|_\infty\\
	&\qquad\times \prod_{i\in\mathcal{I}_1}\left\|v_i^{-1}\mathcal{X}_{B_{k_1+1}}\right\|_\infty\, \prod_{i\in\mathcal{I}_2}\left(\int_{B_{k_1+1}}v_i^{-p_i'}\right)^{1/p_i'}\\
	&\leq C\sum_{k_1=1}^\infty 2^{(m-1)k_1\theta} 2^{k_1(\delta-1)},
	\end{align*}
and the last sum is finite provided $\delta<\tau$. 
\end{proof}

\section{Technical results}\label{section: resultados auxiliares}

In this section we introduce some operators involved with $I_{\gamma,m}$ and useful properties in order to prove our main results. 

Let $B=B(x_B,R)$ and $\tilde B=2B$. We can formally decompose the operator in \eqref{eq: operador Jgamma,m} as
\[J_{\gamma,m}\vec{f}(x)=a_B + I\vec{f}(x),\] where
\begin{equation}\label{eq: definicion de a_B}
	a_B=\int_{(\mathbb{R}^n)^m} \left(\frac{1-\mathcal{X}_{\tilde B^m}(\vec y)}{(\sum_{i=1}^m|x_B-y_i|)^{mn-\gamma}}-\frac{1-\mathcal{X}_{B(0,1)^m}(\vec{y})}{(\sum_{i=1}^m|y_i|)^{mn-\gamma}}\right)\prod_{i=1}^m f_i(y_i)\,d\vec{y}
\end{equation}
and
\begin{equation}\label{eq: definicion de I}
	I\vec{f}(x)=\int_{(\mathbb{R}^n)^m} \left(\frac{1}{(\sum_{i=1}^m |x-y_i|)^{mn-\gamma}}-\frac{1-\mathcal{X}_{\tilde B^m}(\vec{y})}{(\sum_{i=1}^m|x_B-y_i|)^{mn-\gamma}}\right)\prod_{i=1}^m f_i(y_i)\,d\vec{y}.
\end{equation}

The first step is to show that this operator is well-defined for $\vec{f}$ as in Theorem~\ref{teo: teo principal}. 

\begin{lema}\label{lema: finitud de J_gamma,m}
	Let $0<\gamma<mn$, $\delta\in\mathbb{R}$,  and $\vec{p}$ be a vector of exponents that verifies $p>n/\gamma$. Let $(w,\vec{v})$ be a pair of weights in $\mathbb{H}_m(\vec{p},\gamma,\delta)$ such that $v_i^{-p_{i}'}\in \mathrm{RH}_{m}$, for $ i\in \mathcal I_2$. If $\vec{f}$ satisfies $f_iv_i\in L^{p_i}$ for every $1\leq i\leq m$, then $J_{\gamma,m}\vec{f}$ is finite in almost every $x\in \mathbb{R}^n$.
\end{lema}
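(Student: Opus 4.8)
Since $\mathbb{R}^n$ is the union of countably many balls, it suffices to fix a ball $B=B(x_B,R)$, write $\tilde B=2B$, and show that $J_{\gamma,m}\vec f(x)$ is finite for a.e.\ $x\in B$. For such $x$ we use the splitting $J_{\gamma,m}\vec f(x)=a_B+I\vec f(x)$ with $a_B$ and $I\vec f$ as in \eqref{eq: definicion de a_B}--\eqref{eq: definicion de I}; the plan is to prove that $a_B$ is a \emph{finite constant} and that $I\vec f$ is \emph{finite a.e.\ on $B$} (checking, region by region on $(\mathbb{R}^n)^m$, that all the integrals involved converge absolutely, so the splitting is legitimate). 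Two consequences of the hypotheses will be used repeatedly. First, since $(w,\vec v)\in\mathbb{H}_m(\vec p,\gamma,\delta)$ and $w$ is a weight, $0<\|w\mathcal{X}_{B'}\|_\infty<\infty$ for every ball $B'$, and consequently every factor in \eqref{eq: clase Hbarra(p,gamma,delta) - m} (and in \eqref{eq: condicion local}) is finite; in particular $v_i^{-p_i'}\in L^1_{\mathrm{loc}}$ for $i\in\mathcal{I}_2$ and $\|v_i^{-1}\mathcal{X}_{B'}\|_\infty<\infty$ for $i\in\mathcal{I}_1$. Second, combining this with $f_iv_i\in L^{p_i}$ and Hölder's inequality gives $f_i\in L^1_{\mathrm{loc}}$ for every $i$. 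We also write $\gamma=\sum_{i=1}^m\gamma_i$ with $0<\gamma_i<n$ and record the elementary identity $\sum_{i=1}^m(n-\gamma_i+\tfrac1m)=mn-\gamma+1$.

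\textbf{The term $I\vec f$.} Split the integral in \eqref{eq: definicion de I} according to $\tilde B^m$ and its complement. Over $\tilde B^m$ the second kernel in \eqref{eq: definicion de I} vanishes, so this piece equals $I_{\gamma,m}(f_1\mathcal{X}_{\tilde B},\dots,f_m\mathcal{X}_{\tilde B})(x)$; the elementary inequality $\big(\sum_i|x-y_i|\big)^{mn-\gamma}\ge\prod_i|x-y_i|^{n-\gamma_i}$ (compare both sides with $\max_i|x-y_i|$) yields the pointwise bound
\[ I_{\gamma,m}(f_1\mathcal{X}_{\tilde B},\dots,f_m\mathcal{X}_{\tilde B})(x)\le\prod_{i=1}^m I_{\gamma_i}\big(|f_i|\mathcal{X}_{\tilde B}\big)(x). \]
For each $i$, $|f_i|\mathcal{X}_{\tilde B}\in L^1$ has compact support and $0<\gamma_i<n$, so $I_{\gamma_i}(|f_i|\mathcal{X}_{\tilde B})\in L^1_{\mathrm{loc}}$ and is thus finite a.e.; being a finite product of a.e.-finite functions, the local piece is finite a.e. Over the complement, let $x\in B$ and $\vec y\notin\tilde B^m$, so $|x_B-y_{i_0}|\ge 2R$ for some $i_0$ and, as a short computation shows, $R\le\sum_i|x-y_i|\approx\sum_i|x_B-y_i|$ with constants depending only on $m$. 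By the mean value theorem applied to $g(t)=t^{-(mn-\gamma)}$,
\[ \left|\frac{1}{(\sum_i|x-y_i|)^{mn-\gamma}}-\frac{1}{(\sum_i|x_B-y_i|)^{mn-\gamma}}\right|\lesssim\frac{|x-x_B|}{(\sum_i|x_B-y_i|)^{mn-\gamma+1}}\lesssim\frac{R}{(\sum_i|x_B-y_i|)^{mn-\gamma+1}}. \]
Since $\sum_i|x_B-y_i|\ge 2R$ on this region we have $R+|x_B-y_i|\le R+\sum_j|x_B-y_j|\lesssim\sum_j|x_B-y_j|$, which together with the exponent identity gives
\[ \frac{R}{(\sum_i|x_B-y_i|)^{mn-\gamma+1}}\lesssim R\prod_{i=1}^m\frac{1}{(R+|x_B-y_i|)^{n-\gamma_i+1/m}}. \]
Hence the complement part of $I\vec f(x)$ is bounded, for \emph{every} $x\in B$, by
\[ R\prod_{i=1}^m\int_{\mathbb{R}^n}\frac{|f_i(y_i)|}{(R+|x_B-y_i|)^{n-\gamma_i+1/m}}\,dy_i\le R\prod_{i=1}^m\|f_iv_i\|_{p_i}\left(\int_{\mathbb{R}^n}\frac{v_i^{-p_i'}(y_i)}{(R+|x_B-y_i|)^{(n-\gamma_i+1/m)p_i'}}\,dy_i\right)^{1/p_i'}, \]
with the usual modification \eqref{eq: factor de H para p_i=1} when $i\in\mathcal{I}_1$; this is finite by the $\mathbb{H}_m$ condition and independent of $x$. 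Therefore $I\vec f$ is finite a.e.\ on $B$.

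\textbf{The constant $a_B$.} Decompose $(\mathbb{R}^n)^m$ into the region $\{\vec y:\sum_i|y_i|>c(R+|x_B|)\}$ for a suitable constant $c$ and its complement. On the complement the integrand in \eqref{eq: definicion de a_B} is supported in a bounded box, and there both denominators $\sum_i|y_i|$ and $\sum_i|x_B-y_i|$ stay bounded below (the first term is nonzero only when $\vec y\notin\tilde B^m$, where $\sum_i|x_B-y_i|\ge 2R$; the second only when $\vec y\notin B(0,1)^m$, where $\sum_i|y_i|\ge1$); hence the integrand is bounded and the contribution is controlled by a constant times $\prod_i\int_{\text{box}}|f_i|<\infty$. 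On the remaining region both kernels of \eqref{eq: definicion de a_B} are present with $\sum_i|y_i|\approx\sum_i|x_B-y_i|\gtrsim R$, so exactly as above
\[ \left|\frac{1}{(\sum_i|x_B-y_i|)^{mn-\gamma}}-\frac{1}{(\sum_i|y_i|)^{mn-\gamma}}\right|\lesssim\frac{|x_B|}{(\sum_i|y_i|)^{mn-\gamma+1}}\lesssim|x_B|\prod_{i=1}^m\frac{1}{(R+|x_B-y_i|)^{n-\gamma_i+1/m}}, \]
and a further application of Hölder's inequality and of the $\mathbb{H}_m$ condition (as in the previous step) shows this contribution is finite. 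Thus $a_B$ is a finite constant, and $J_{\gamma,m}\vec f(x)=a_B+I\vec f(x)$ is finite for a.e.\ $x\in B$; letting $B$ range over a countable covering of $\mathbb{R}^n$ completes the proof.

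\textbf{Main obstacle.} The crux is the pair of elementary kernel inequalities $\big(\sum_i|x-y_i|\big)^{mn-\gamma}\ge\prod_i|x-y_i|^{n-\gamma_i}$ for the local part and $\big(R+\sum_i|x_B-y_i|\big)^{mn-\gamma+1}\gtrsim\prod_i(R+|x_B-y_i|)^{n-\gamma_i+1/m}$ for the part away from $B$, whose exponents are exactly those appearing in the definition of $\mathbb{H}_m(\vec p,\gamma,\delta)$, together with the mean value estimate that regularizes the kernel at infinity and the observation that a finite product of a.e.-finite functions is a.e.\ finite. The remaining difficulty is purely bookkeeping: the case analysis showing that $a_B$ is a finite constant.
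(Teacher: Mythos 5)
Your proof is correct for the lemma as stated, but on the crucial local term it takes a genuinely different and more elementary route than the paper, and it proves strictly less than the paper's argument does. For the part of $I\vec f$ supported in $\tilde B^m$ you dominate $I_{\gamma,m}(\vec f\mathcal{X}_{\tilde B^m})$ pointwise by $\prod_{i=1}^m I_{\gamma_i}\bigl(|f_i|\mathcal{X}_{\tilde B}\bigr)$ and use that each linear fractional integral of a compactly supported $L^1$ function is locally integrable, hence finite a.e.; this yields the a.e.\ finiteness without ever using $p>n/\gamma$, the reverse H\"older hypothesis $v_i^{-p_i'}\in\mathrm{RH}_m$, or the boundedness $I_{\gamma_0,m_2}:\prod_j L^{r_j}\to L^q$ from \cite{Moen09}, all of which the paper invokes exactly at this point. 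The reason the paper works harder is that its proof of the lemma actually establishes the quantitative bound $\int_B|I\vec f|\le C|B|^{1+\delta/n}\|w\mathcal{X}_B\|_\infty^{-1}\prod_i\|f_iv_i\|_{p_i}$, which is what the remark following the lemma refers to and what the proof of Theorem~\ref{teo: teo principal} then cites; your treatment of $a_B$ and of the part of $I\vec f$ away from $\tilde B^m$ reproduces, in essence, the paper's estimates of $a_B^2$ and $I_2\vec f$ (mean value theorem plus the kernel inequality matching the exponents $n-\gamma_i+1/m$ and the $\mathbb{H}_m$ condition), but for the local piece your argument delivers only a.e.\ finiteness, so it could not replace the paper's proof without redoing that local $L^1(B)$ estimate. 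Two small points: the finiteness of $\|w\mathcal{X}_{B'}\|_\infty$ is not automatic for a weight, it follows from the $\mathbb{H}_m$ inequality once one notes that all the $v_i$-factors are strictly positive (positivity of $\|w\mathcal{X}_{B'}\|_\infty$ alone already suffices to conclude the $v_i$-factors are finite and hence $f_i\in L^1_{\mathrm{loc}}$); and in the estimate of $a_B$ the far region $\{\sum_i|y_i|>c(R+|x_B|)\}$ need not avoid $B(0,1)^m$ when $R+|x_B|$ is small, so one should either take $c\ge\max\{2m,\,m/(R+|x_B|)\}$ or dispose of the (trivial, bounded-kernel) leftover piece separately — a corner case the paper's own write-up glosses over in the same way.
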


\begin{proof}
		We shall estimate $a_B$ and $I\vec{f}$ separately. Fix $B=B(x_B, R)$ and consider $B_0=B(0, R_0)$, where $R_0=2(|x_B|+R)$. Then, if $\vec{y}\not\in B_0^m$ then the expression between brackets behaves as  
		\[\left(\sum_{i=1}^m|x_B-y_i|\right)^{-mn+\gamma-1}.\]
		We can write
		\begin{align*}
		a_B&=\int_{(\mathbb{R}^n)^m} \left(\frac{1-\mathcal{X}_{\tilde B^m}(\vec y)}{(\sum_{i=1}^m |x_B-y_i|)^{mn-\gamma}}-\frac{1-\mathcal{X}_{B(0,1)^m}(\vec{y})}{(\sum_{i=1}^m|y_i|)^{mn-\gamma}}\right)\left(\prod_{i=1}^mf_i(y_i)\right)\,d\vec{y}\\
		&=\int_{B_0^m}+\int_{(\mathbb{R}^n)^m\backslash B_0^m}\\
		&=a_B^1+a_B^2.
		\end{align*}
		
		We split the estimate of $a_B^1$ into four possible cases.
		\begin{enumerate}
			\item $\mathcal{X}_{\tilde B^m}(\vec{y})=\mathcal{X}_{ B(0,1)^m}(\vec{y})=0$. In this case we have that $y_{i_0}\not\in \tilde B$, for at least one $i_0\in \{1,\dots,m\}$. This yields
			\[\sum_{i=1}^m|x_B-y_i|\geq |x_B-y_{i_0}|>C |B|^{1/n},\]
			since $B\subset B_0$. On the other hand,  $\vec{y}\not\in B(0,1)^m$ implies that $|y_{j_0}|\geq 1$ for some $j_0$. Thus,
			\[\left|\frac{1-\mathcal{X}_{\tilde B^m}(\vec y)}{(\sum_{i=1}^m|x_B-y_i|)^{mn-\gamma}}-\frac{1-\mathcal{X}_{B(0,1)^m}(\vec{y})}{(\sum_{i=1}^m|y_i|)^{mn-\gamma}}\right|\leq 1+\frac{C}{|B|^{m-\gamma/n}}.\]
			\item $\mathcal{X}_{\tilde B^m}(\vec{y})=0$, $\mathcal{X}_{ B(0,1)^m}(\vec{y})=1$. In this case we have that
			\begin{align*}
			\left|\frac{1-\mathcal{X}_{\tilde B^m}(\vec y)}{(\sum_{i=1}^m|x_B-y_i|)^{mn-\gamma}}-\frac{1-\mathcal{X}_{B(0,1)^m}(\vec{y})}{(\sum_{i=1}^m|y_i|)^{mn-\gamma}}\right|&=\frac{1}{(\sum_{i=1}^m|x_B-y_i|)^{mn-\gamma}}\\
			&\leq 1+\frac{C}{|B|^{m-\gamma/n}}.
			\end{align*}
			\item $\mathcal{X}_{\tilde B^m}(\vec{y})=1$, $\mathcal{X}_{ B(0,1)^m}(\vec{y})=0$. We obtain that
			\begin{align*}
			\left|\frac{1-\mathcal{X}_{\tilde B^m}(\vec y)}{(\sum_{i=1}^m|x_B-y_i|)^{mn-\gamma}}-\frac{1-\mathcal{X}_{B(0,1)^m}(\vec{y})}{(\sum_{i=1}^m|y_i|)^{mn-\gamma}}\right|&=\frac{1}{(\sum_{i=1}^m|y_i|)^{mn-\gamma}}\\
			&\leq 1+\frac{C}{|B|^{m-\gamma/n}}.
			\end{align*}
			\item $\mathcal{X}_{\tilde B^m}(\vec{y})=\mathcal{X}_{ B(0,1)^m}(\vec{y})=1$. This is the simplest case since the expression is zero and we trivially obtain the desired bound.
		\end{enumerate}
		
		Recall that $\mathcal{I}_1=\{i: p_i=1\}$ and $\mathcal{I}_2=\{i: p_i>1\}$. With the previous estimate we have that
		\begin{align*}
		|a_B^1|&\leq \left(1+\frac{C}{|B|^{m-\gamma/n}}\right)\left(\int_{B_0^m}\prod_{i=1}^mf_i(y_i)\,d\vec{y}\right)\\
		&=\left(1+\frac{C}{|B|^{m-\gamma/n}}\right)\prod_{i=1}^m\left(\int_{B_0}f_i(y_i)\,dy_i\right)\\
		&\leq \left(1+\frac{C}{|B|^{m-\gamma/n}}\right)
		\prod_{i=1}^m\|f_iv_i\|_{p_i}\prod_{i\in \mathcal{I}_1} \left\|v_i^{-1}\mathcal{X}_{B_0}\right\|_\infty\prod_{i\in \mathcal{I}_2}\left(\int_{B_0}v_i^{-p_i'}\right)^{1/p_i'}\\ 
		&<\infty.
		\end{align*}
		
		We now turn our attention to the estimate of $a_B^2$. By noticing that
		\[(B_0^m)^c=\bigcup_{\sigma\in S_m, \sigma\neq \mathbf{1}} \mathbf{B_0}^{\sigma}\]
		we obtain
		\[a_B^2=\sum_{\sigma\in S_m, \sigma\neq \mathbf{1}}\int_{\mathbf{B_0}^\sigma}\frac{\prod_{i=1}^mf_i(y_i)}{(\sum_{i=1}^m|x_B-y_i|)^{mn-\gamma+1}}\,d\vec{y}.\] 
		Let us estimate a term of this sum for a fixed $\sigma\in S_m$, $\sigma\neq \mathbf{1}$. If we set $\theta_i=n-\gamma_i+1/m$, for $1\leq i\leq m$, we have that
		\begin{equation}\label{eq: lema: finitud de Jgamma,m - eq1}
		\int_{\mathbf{B_0}^\sigma}\frac{\prod_{i=1}^mf_i(y_i)}{(\sum_{i=1}^m|x_B-y_i|)^{mn-\gamma+1}}\,d\vec{y}\leq C\left(\prod_{i: \sigma_i=1}\int_{B_0}\frac{|f_i|}{|B_0|^{\theta_i/n}}\right)\left(\prod_{i: \sigma_i=0}\int_{B_0^c}\frac{|f_i(y_i)|}{|x_B-y_i|^{\theta_i}}\,dy_i\right).
		\end{equation}
		We shall estimate each factor on the right hand side separately. Observe that, for every $i\in\mathcal{I}_1$ such that $\sigma_i=1$, we have
		\[\left\|v_i^{-1}\mathcal{X}_{B_0}\right\|_\infty\leq C |B_0|^{\theta_i/n}\left\|\frac{v_i^{-1}}{(|B_0|^{1/n}+|x_{B_0}-\cdot|)^{\theta_i}}\right\|_\infty.\]
		On the other hand, if $\sigma_i=1$ and $i\in\mathcal{I}_2$ we get
		\begin{align*}
		\frac{1}{|B_0|^{\theta_i/n}}\left(\int_{B_0}v_i^{-p_i'}\right)^{1/p_i'}
		&\leq C\left(\int_{B_0}\frac{v_i^{-p_i'}(y_i)}{(|B_0|^{1/n}+|x_{B_0}-y_i|)^{\theta_ip_i'}}\,dy_i\right)^{1/p_i'}\\
		&\leq C\left(\int_{\mathbb{R}^n}\frac{v_i^{-p_i'}(y_i)}{(|B_0|^{1/n}+|x_{B_0}-y_i|)^{\theta_ip_i'}}\,dy_i\right)^{1/p_i'}.
		\end{align*}

		By combining these two estimates we get
		\begin{align*}
		\prod_{i: \sigma_i=1}\int_{B_0}\frac{|f_i(y_i)|}{|B_0|^{\theta_i/n}}\,dy_i&\leq\left(\prod_{i\in\mathcal{I}_1: \sigma_i=1}\|f_iv_i\|_1 \frac{\left\|v_i^{-1}\mathcal{X}_{B_0}\right\|_\infty}{|B_0|^{\theta_i/n}}\right)\left(\prod_{i\in\mathcal{I}_2: \sigma_i=1} \frac{\|f_iv_i\|_{p_i}}{|B_0|^{\theta_i/n}}\left(\int_{B_0}v_i^{-p_i'}\right)^{1/p_i'}\right)\\
		&\leq C\left(\prod_{i: \sigma_i=1}\|f_iv_i\|_{p_i}\right)\prod_{i\in \mathcal{I}_1: \sigma_i=1}\left\|\frac{v_i^{-1}}{(|B_0|^{1/n}+|x_{B_0}-\cdot|)^{\theta_i}}\right\|_\infty\\
		&\quad \times\prod_{i\in\mathcal{I}_2: \sigma_i=1}\left(\int_{\mathbb{R}^n}\frac{v_i^{-p_i'}(y_i)}{(|B_0|^{1/n}+|x_{B_0}-y_i|)^{\theta_ip_i'}}\,dy_i\right)^{1/p_i'}.
		\end{align*}
		We now turn our attention to the second factor on the right hand side of \eqref{eq: lema: finitud de Jgamma,m - eq1}. If $z\not\in B_0$ then
		\[|z|\leq |z-x_B|+|x_B|\leq |z-x_B|+\frac{R_0}{2}<|z-x_B|+\frac{|z|}{2},\]
		which implies that $|z-x_B|>|z|/2$. Therefore, if $i\in\mathcal{I}_1$ and $\sigma_i=0$ we get
		\[\left\|\frac{v_i^{-1}\mathcal{X}_{B_0^c}}{|x_B-\cdot|^{\theta_i}}\right\|_\infty\leq C \left\|\frac{v_i^{-1}\mathcal{X}_{B_0^c}}{(|B_0|^{1/n}+|x_{B_0}-\cdot|)^{\theta_i}}\right\|_\infty\leq C \left\|\frac{v_i^{-1}}{(|B_0|^{1/n}+|x_{B_0}-\cdot|)^{\theta_i}}\right\|_\infty.\]
		Notice also that, if $i\in\mathcal{I}_2$ and $\sigma_i=0$, we have 
		\begin{align*}
		\left(\int_{B_0^c} \frac{v_i^{-p_i'}(y_i)}{|x_B-y_i|^{\theta_ip_i'}}\,dy_i\right)^{1/p_i'}&\leq C \left(\int_{B_0^c} \frac{v_i^{-p_i'}(y_i)}{(|B_0|^{1/n}+|x_{B_0}-y_i|)^{\theta_ip_i'}}\,dy_i\right)^{1/p_i'}\\
		&\leq C\left(\int_{\mathbb{R}^n} \frac{v_i^{-p_i'}(y_i)}{(|B_0|^{1/n}+|x_{B_0}-y_i|)^{\theta_ip_i'}}\,dy_i\right)^{1/p_i'}.
		\end{align*}	
		Thus we can proceed as follows
		\begin{align*}
		\prod_{i: \sigma_i=0}\int_{B_0^c}\frac{|f_i(y_i)|}{|x_B-y_i|^{\theta_i}}\,dy_i&\leq \prod_{i\in \mathcal{I}_1: \sigma_i=0}\|f_iv_i\|_{1}\left\|\frac{v_i^{-1}\mathcal{X}_{B_0^c}}{|x_B-\cdot|^{\theta_i}}\right\|_\infty\\
		&\quad\times \prod_{i\in\mathcal{I}_2: \sigma_i=0}\|f_iv_i\|_{p_i}\left(\int_{B_0^c}\frac{v_i^{-p_i'}}{|x_B-y_i|^{\theta_ip_i'}}\right)^{1/p_i'}\\
		&\leq C\left(\prod_{i: \sigma_i=0}\|f_iv_i\|_{p_i}\right)\prod_{i\in \mathcal{I}_1: \sigma_i=0}\left\|\frac{v_i^{-1}}{(|B_0|^{1/n}+|x_{B_0}-\cdot|)^{\theta_i}}\right\|_\infty\\
		&\quad \times\prod_{i\in\mathcal{I}_2: \sigma_i=0}\left(\int_{\mathbb{R}^n}\frac{v_i^{-p_i'}(y_i)}{(|B_0|^{1/n}+|x_{B_0}-y_i|)^{\theta_ip_i'}}\,dy_i\right)^{1/p_i'}.
		\end{align*}
		By using these estimates in \eqref{eq: lema: finitud de Jgamma,m - eq1} and applying condition \eqref{eq: clase Hbarra(p,gamma,delta) - m} we obtain that 
		\[	\int_{\mathbf{B_0}^\sigma}\frac{\prod_{i=1}^mf_i(y_i)}{(\sum_{i=1}^m|x_B-y_i|)^{mn-\gamma+1}}\,d\vec{y}\leq C \frac{|B_0|^{(\delta-1)/n}}{\|w\mathcal{X}_{B_0}\|_\infty}\prod_{i=1}^m \|f_iv_i\|_{p_i},\]
		and therefore
		\[a_B^2\leq C\frac{|B_0|^{(\delta-1)/n}}{\|w\mathcal{X}_{B_0}\|_\infty}\prod_{i=1}^m \|f_iv_i\|_{p_i}.\]
		We proceed with the estimation of $I\vec{f}$. We write $I\vec{f}(x)=I_1\vec{f}(x)+I_2\vec{f}(x)$, where
		\[I_1\vec{f}(x)=\int_{\tilde B^m}\frac{\prod_{i=1}^mf_i(y_i)}{(\sum_{i=1}^m|x-y_i|)^{mn-\gamma}}\,d\vec{y}\]
		and
		\[I_2\vec{f}(x)=\int_{(\tilde B^m)^c}\left(\prod_{i=1}^mf_i(y_i)\right)\left(\frac{1}{(\sum_{i=1}^m|x-y_i|)^{mn-\gamma}}-\frac{1}{(\sum_{i=1}^m|x_B-y_i|)^{mn-\gamma}}\right)\,d\vec{y}.\]
		Let us first estimate $I_1$. We shall split the set $\mathcal I_2$ into $\mathcal I_2^1$ and $\mathcal I_2^2$ where
		\[\mathcal{I}_2^1=\{i: 1<p_i<\infty\}\quad \textrm{ and } \quad\mathcal{I}_2^2=\{i: p_i=\infty\}.\]
		Let $m_2^j=\#\mathcal{I}_2^j$, for $j=1,2$. Then $m=m_1+m_2=m_1+m_2^1+m_2^2$. Observe that
		\begin{align*}
		|I_1\vec{f}(x)|&\leq \int_{\tilde B^m}\frac{\prod_{i=1}^m|f_i(y_i)|}{(\sum_{i=1}^m|x-y_i|)^{mn-\gamma}}\,d\vec{y}\\
		&\leq \left(\prod_{i\in\mathcal{I}_2^2}\|f_iv_i\|_\infty\right)\int_{\tilde B^m}\frac{\prod_{i\in \mathcal{I}_1\cup\mathcal{I}_2^1}|f_i(y_i)|\prod_{i\in \mathcal{I}_2^2}v_i^{-1}(y_i)}{(\sum_{i=1}^m|x-y_i|)^{mn-\gamma}}\,d\vec{y}\\
		&\leq \left(\prod_{i\in\mathcal{I}_2^2}\|f_iv_i\|_\infty\right)\left(\prod_{i\in \mathcal{I}_1}\|f_i\mathcal{X}_{\tilde B}\|_1\right) \int_{\tilde B^{m_2}}\frac{\prod_{i\in \mathcal{I}_2^1}|f_i(y_i)|\prod_{i\in \mathcal{I}_2^2}v_i^{-1}(y_i)}{(\sum_{i\in \mathcal{I}_2}|x-y_i|)^{mn-\gamma}}\,d\vec{y}\\
		&=\left(\prod_{i\in\mathcal{I}_2^2}\|f_iv_i\|_\infty\right)\left(\prod_{i\in \mathcal{I}_1}\|f_i\mathcal{X}_{\tilde B}\|_1\right)I(x,B).
		\end{align*}
		Since $p>n/\gamma$ we have that
		\[\gamma>n/p=n\sum_{i=1}^m\frac{1}{p_i}=m_1n+\frac{n}{p^*},\]
		where $1/p^*=\sum_{i\in\mathcal{I}_2}1/p_i$. This allows us to split $\gamma=\gamma^1+\gamma^2$, where $\gamma^1>m_1n$ and $\gamma^2>n/p^*$. Therefore
		\[mn-\gamma=m_2n-\gamma^2+m_1n-\gamma^1.\]
		Let us sort the sets $\mathcal{I}_2^1$ and $\mathcal{I}_2^2$ increasingly, so
		\[\mathcal{I}_2^1=\left\{i_1,i_2,\dots,i_{m_2^1}\right\} \quad \textrm{ and } \quad \mathcal{I}_2^2=\left\{i_{m_2^1+1},i_{m_2^1+2},\dots,i_{m_2}\right\}.\]
		We now define $\vec{g}=(g_1,\dots,g_{m_2})$, where
		\[g_j=\left\{\begin{array}{ccl}
		|f_{i_j}|&\textrm{ if }&1\leq j\leq m_2^1;\\
		v_{i_j}^{-1}&\textrm{ if }&m_2^1+1\leq j\leq m_2.
		\end{array}
		\right.
		\]
		Then we can estimate as follows
		\begin{align*}
		I(x,B)&\leq C\int_{\tilde B^{m_2}}\frac{\prod_{i\in \mathcal{I}_2^1}|f_i(y_i)|\prod_{i\in \mathcal{I}_2^2}v_i^{-1}(y_i){(\sum_{i\in \mathcal{I}_2}|x-y_i|)^{{\gamma^1-nm_1}}}}{(\sum_{i\in \mathcal{I}_2}|x-y_i|)^{m_2n-\gamma^2}}\,d\vec{y}\\
		&\leq C|\tilde B|^{\gamma^1/n-m_1}\int_{\tilde B^{m_2}}\frac{\prod_{i\in \mathcal{I}_2^1}|f_i(y_i)|\prod_{i\in \mathcal{I}_2^2}v_i^{-1}(y_i)}{(\sum_{i\in \mathcal{I}_2}|x-y_i|)^{m_2n-\gamma^2}}\,d\vec{y}\\
		&=C|\tilde B|^{\gamma^1/n-m_1}\int_{\tilde B^{m_2}}\frac{\prod_{j=1}^{m_2}g_j(y_{i_j})}{(\sum_{j=1}^{m_2}|x-y_{i_j}|)^{m_2n-\gamma^2}}\,d\vec{y}\\
		&\leq C|\tilde B|^{\gamma^1/n-m_1}I_{\gamma^2,m_2}(\vec{g}\mathcal{X}_{\tilde B^{m_2}})(x).
		\end{align*}
		Next we define the vector of exponents $\vec{r}=(r_1,\dots,r_{m_2})$ in the following way
		\[r_j=\left\{\begin{array}{ccr}
		m_2p_{i_j}/(m_2-1+p_{i_j})&\textrm{ si }&1\leq j\leq m_2^1;\\
		m_2&\textrm{ si }&m_2^1+1\leq j\leq m_2.
		\end{array}
		\right.
		\]
		This definition yields
		\begin{align*}
		\frac{1}{r}&=\sum_{j=1}^{m_2}\frac{1}{r_j}=\sum_{j=1}^{m_2^1}\left(\frac{1}{m_2}+\frac{m_2-1}{m_2p_{i_j}}\right)+\sum_{j=m_2^1+1}^{m_2}\frac{1}{m_2}=\frac{m_2^1}{m_2}+\frac{m_2-1}{m_2{p^*}}+\frac{m_2^2}{m_2}=1+\frac{m_2-1}{m_2p^*}.
		\end{align*}
		
		Observe that $1/r>1/p^*$. We also have $n/p^*<\gamma^2$. Then there exists an auxiliary number $\gamma_0$ such that $n/p^*<\gamma_0<n/r$. Indeed, if $\gamma^2<n/r$ we can directly pick $\gamma_0=\gamma^2$. Otherwise $\gamma_0<\gamma^2$. Let us first assume that $m_2\geq 2$. We set
		\[\frac{1}{q}=\frac{1}{r}-\frac{\gamma_0}{n}.\]
		Then $0<1/q<1$ since
		\[\frac{1}{r}-1=\left(1-\frac{1}{m_2}\right)\frac{1}{p^*}<\frac{1}{p^*}<\frac{\gamma_0}{n}.\]
				By using the fact that $I_{\gamma_0,m_2}: \prod_{j=1}^{m_2} L^{r_j}\to L^q$ (see \cite{Moen09}) we obtain 
		\begin{align*}
		\int_B I(x,B)\,dx&\leq C|\tilde B|^{\gamma^1/n-m_1+(\gamma^2-\gamma_0)/n}\left(\int_B|I_{\gamma_0,m_2}(\vec{g}\mathcal{X}_{\tilde B^{m_2}})(x)|^q\,dx\right)^{1/q}|B|^{1/q'}\\
		&\leq C|\tilde B|^{(\gamma-\gamma_0)/n-m_1+1/q'}\left(\int_{\mathbb{R}^n}|I_{\gamma_0,m_2}(\vec{g}\mathcal{X}_{\tilde B^{m_2}})(x)|^q\,dx\right)^{1/q}\\
		&\leq C|\tilde B|^{(\gamma-\gamma_0)/n-m_1+1/q'}\prod_{j=1}^{m_2}\|g_j\mathcal{X}_{\tilde B}\|_{r_j}.
		\end{align*}
		Observe that $r_j<p_{i_j}$ for every $1\leq j\leq m_2^1$. By applying H\"{o}lder inequality we have
		\begin{align*}
		\prod_{j=1}^{m_2}\|g_j\mathcal{X}_{\tilde B}\|_{r_j}&=\prod_{i\in\mathcal{I}_2^1}\left(\int_{\tilde B}|f_i|^{r_i}v_i^{r_i}v_i^{-r_i}\right)^{1/r_i}\prod_{i\in \mathcal{I}_2^2}\left(\int_{\tilde B}v_i^{-m_2}\right)^{1/m_2}\\
		&\leq \prod_{i\in\mathcal{I}_2^1}\|f_iv_i\|_{p_i}\left(\int_{\tilde B} v_i^{-m_2p_i'}\right)^{1/(m_2p_i')}\prod_{i\in \mathcal{I}_2^2}\left(\int_{\tilde B}v_i^{-m_2}\right)^{1/m_2}\\
		&\leq |\tilde B|^{m_2^1/m_2-1/(m_2p^*)+m_2^2/m_2}\prod_{i\in\mathcal{I}_2^1}\left[v_i^{-p_i'}\right]_{\mathrm{RH}_{m_2}}\|f_iv_i\|_{p_i}\left(\frac{1}{|\tilde B|}\int_{\tilde B} v_i^{-p_i'}\right)^{1/p_i'}\\
		&\quad \times \prod_{i\in \mathcal{I}_2^2}\left[v_i^{-1}\right]_{\mathrm{RH}_{m_2}}\left(\frac{1}{|\tilde B|}\int_{\tilde B}v_i^{-1}\right)\\
		&=C|\tilde B|^{1-1/(m_2p^*)}\prod_{i\in\mathcal{I}_2^1}\|f_iv_i\|_{p_i}\left(\frac{1}{|\tilde B|}\int_{\tilde B} v_i^{-p_i'}\right)^{1/p_i'}\prod_{i\in \mathcal{I}_2^2}\left(\frac{1}{|\tilde B|}\int_{\tilde B}v_i^{-1}\right).
		\end{align*}
		By combining all these estimates with condition \eqref{eq: condicion local}, we finally get that
		\begin{align*}
		\int_B |I_1\vec{f}(x)|\,dx&\leq \left(\prod_{i\in\mathcal{I}_2^2}\|f_iv_i\|_\infty\right)\left(\prod_{i\in \mathcal{I}_1}\|f_i\mathcal{X}_{\tilde B}\|_1\right)\int_B I(x,B)\,dx\\
		&\leq C\left(\prod_{i\in\mathcal{I}_2^2}\|f_iv_i\|_\infty\right)\left(\prod_{i\in \mathcal{I}_1}\|f_i\mathcal{X}_{\tilde B}\|_1\right)|\tilde B|^{(\gamma-\gamma_0)/n-m_1+1/q'+1-1/(m_2p^*)}\\
		&\quad \times \prod_{i\in\mathcal{I}_2^1}\|f_iv_i\|_{p_i}\left(\frac{1}{|\tilde B|}\int_{\tilde B} v_i^{-p_i'}\right)^{1/p_i'}\prod_{i\in \mathcal{I}_2^2}\left(\frac{1}{|\tilde B|}\int_{\tilde B}v_i^{-1}\right)\\
		&\leq C\left(\prod_{i=1}^m \|f_iv_i\|_{p_i}\right)\prod_{i\in \mathcal{I}_2}\left(\frac{1}{|\tilde B|}\int_{\tilde B} v_i^{-p_i'}\right)^{1/p_i'}\prod_{i\in\mathcal{I}_1} \left\|v_i^{-1}\mathcal{X}_{\tilde B}\right\|_\infty \\
		&\quad \times |\tilde B|^{(\gamma-\gamma_0)/n-m_1+1/q'+1-1/(m_2p^*)}\\
		&\leq C \|w\mathcal{X}_{\tilde B}\|_\infty^{-1}\, |\tilde B|^{\delta/n-\gamma/n+1/p+(\gamma-\gamma_0)/n-m_1+1/q'+1-1/(m_2p^*)}\\
		&\leq C \|w\mathcal{X}_{B}\|_\infty^{-1}|B|^{1+\delta/n}.
		\end{align*}
		Therefore, we can obtain the desired bound for $I_1\vec{f}$ provided $m_2\geq 2$. We now consider the case $0\leq m_2<2$. There are only three possible cases:  
		\begin{enumerate}
			\item $m_2=0$. In this case we have $m_2^1=m_2^2=0$ and this implies $\vec{p}=(1,1,\dots,1)$. This situation is not possible, because $p>n/\gamma$.
			\item $m_2^1=0$ and $m_2^2=1$. In this case $1/p=m-1$. The condition $p>n/\gamma$ implies $\gamma>(m-1)n$. Let $i_0$ be the index such that $p_{i_0}=\infty$. By using Fubini's theorem, we can proceed in the following way
			\[\int_B \int_{\tilde B^m}\frac{\prod_{i=1}^m|f_i(y_i)|}{(\sum_{i=1}^m|x-y_i|)^{mn-\gamma}}\,d\vec{y}\,dx=\int_{\tilde B^m}\prod_{i=1}^m|f_i(y_i)|\left(\int_B \left(\sum_{i=1}^m|x-y_i|\right)^{\gamma-mn}\,dx\right)\,d\vec{y}.\]
			Since
			\begin{align*}
			\int_B \left(\sum_{i=1}^m|x-y_i|\right)^{\gamma-mn}\,dx&\leq C\int_0^{4R}\rho^{\gamma-mn}\rho^{n-1}\,d\rho\\
			&\leq C |B|^{\gamma/n-m+1},
			\end{align*}
			by  \eqref{eq: condicion local}, we get
			\begin{align*}
			\int_B |I_1\vec{f}(x)|\,dx&\leq C|B|^{\gamma/n-m+2}\left(\prod_{i=1}^m\|f_iv_i\|_{p_i}\right)\left(\prod_{i\in \mathcal{I}_1}\left\|v_i^{-1}\mathcal{X}_{\tilde B}\right\|_\infty\right) \left(\frac{1}{|\tilde B|}\int_{\tilde B}v_{i_0}^{-1}\right)\\
			&\leq C\left(\prod_{i=1}^m\|f_iv_i\|_{p_i}\right)\frac{|\tilde B|^{\gamma/n-m+2+\delta/n-\gamma/n+1/p}}{\|w\mathcal{X}_{\tilde B}\|_\infty}\\
			&\leq C\left(\prod_{i=1}^m\|f_iv_i\|_{p_i}\right)\frac{|B|^{1+\delta/n}}{\|w\mathcal{X}_{ B}\|_\infty}.
			\end{align*}
			\item $m_2^1=1$ and $m_2^2=0$. If $i_0$ denotes the index for which $1<p_{i_0}<\infty$, the condition $p>n/\gamma$ implies that
			\[\frac{\gamma}{n}>\frac{1}{p}=m-1+\frac{1}{p_{i_0}},\]
			and thus $\gamma>(m-1)n$. We repeat the estimate given in the previous case. Then
			\begin{align*}
			\int_B |I_1\vec{f}(x)|\,dx&\leq C|B|^{\gamma/n-m+1+1/p_{i_0}'}\left(\prod_{i=1}^m\|f_iv_i\|_{p_i}\right)\left(\prod_{i\in \mathcal{I}_1}\left\|v_i^{-1}\mathcal{X}_{\tilde B}\right\|_\infty\right)\left(\frac{1}{|\tilde B|}\int_{\tilde B}v_{i_0}^{-p_{i_0}'}\right)^{1/p_{i_0}'}\\
			&\leq C\left(\prod_{i=1}^m\|f_iv_i\|_{p_i}\right)\frac{|\tilde B|^{\gamma/n-m+1+1/p_{i_0}'+\delta/n-\gamma/n+1/p}}{\|w\mathcal{X}_{\tilde B}\|_\infty}\\
			&\leq C\left(\prod_{i=1}^m\|f_iv_i\|_{p_i}\right)\frac{|B|^{1+\delta/n}}{\|w\mathcal{X}_{ B}\|_\infty}.
			\end{align*}
		\end{enumerate}

	This completes the estimate for $I_1\vec{f}$.
	For $I_2\vec f$, by the mean value theorem, we can write
		\[|I_2\vec{f}(x)|\leq |B|^{1/n}\sum_{\sigma\in S_m,\sigma\neq \mathbf{1}} \int_{\mathbf{\tilde B}^\sigma} \frac{\prod_{i=1}^m|f_i(y_i)|}{(\sum_{i=1}^m|x_B-y_i|)^{mn-\gamma+1}}\,d\vec{y}.\]
		Notice that this expression is similar to $a_B^2$, with $B_0$ replaced with $\tilde B$. Therefore, we can proceed in a similar way to obtain
		\[\int_B |I_2\vec{f}(x)|\,dx\leq C\frac{|B|^{1+\delta/n}}{\|w\mathcal{X}_B\|_\infty}\prod_{i=1}^m\|f_iv_i\|_{p_i}.\]
		This completes the proof of the lemma.
\end{proof}

\begin{obs}
The corresponding estimate obtained for $I\vec{f}$ will be used for the proof of Theorem~\ref{teo: teo principal}. 
\end{obs}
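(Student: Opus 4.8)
The plan is to record precisely how the bound for $I\vec f$ obtained inside the proof of Lemma~\ref{lema: finitud de J_gamma,m} is deployed in the implication \eqref{item: teo principal item 2}$\Rightarrow$\eqref{item: teo principal item 1} of Theorem~\ref{teo: teo principal}. Since $J_{\gamma,m}$ and $I_{\gamma,m}$ differ only by a constant and therefore have the same $\mathbb{L}_w(\delta)$ seminorm, it is enough to estimate $\|J_{\gamma,m}\vec f\|_{\mathbb{L}_w(\delta)}$ for $\vec f$ with $f_iv_i\in L^{p_i}$. Fix a ball $B$ and use the splitting $J_{\gamma,m}\vec f(x)=a_B+I\vec f(x)$ from \S~\ref{section: resultados auxiliares}, where $a_B$ is independent of $x$ and, by the first part of the cited lemma, finite; this both legitimizes the decomposition and guarantees that $J_{\gamma,m}\vec f$ is a.e.\ finite, so that the oscillation of $J_{\gamma,m}\vec f$ over $B$ is meaningful.

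From the elementary inequality $\int_B|g-g_B|\le 2\int_B|g-c|$, valid for every constant $c$, taking $c=a_B$ gives
\[\int_B\bigl|J_{\gamma,m}\vec f(x)-(J_{\gamma,m}\vec f)_B\bigr|\,dx\le 2\int_B|I\vec f(x)|\,dx\le 2\int_B|I_1\vec f(x)|\,dx+2\int_B|I_2\vec f(x)|\,dx.\]
Under hypothesis \eqref{item: teo principal item 2}, i.e.\ conditions \eqref{eq: condicion local} and \eqref{eq: clase Hbarra(p,gamma,delta) - m}, the two integrals on the right were shown in Lemma~\ref{lema: finitud de J_gamma,m} to be at most $C\,\|w\mathcal X_B\|_\infty^{-1}|B|^{1+\delta/n}\prod_{i=1}^m\|f_iv_i\|_{p_i}$. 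Multiplying by $\|w\mathcal X_B\|_\infty|B|^{-1-\delta/n}$ and taking the supremum over all balls yields $\|I_{\gamma,m}\vec f\|_{\mathbb{L}_w(\delta)}=\|J_{\gamma,m}\vec f\|_{\mathbb{L}_w(\delta)}\le C\prod_{i=1}^m\|f_iv_i\|_{p_i}$, which is exactly \eqref{item: teo principal item 1}. In this sense the remark merely records that the sufficiency half of the characterization costs nothing beyond the a.e.\ finiteness argument already carried out.

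The reverse implication \eqref{item: teo principal item 1}$\Rightarrow$\eqref{item: teo principal item 2} is established separately, by testing: for a fixed ball $B$ one takes each $f_i$ comparable to $v_i^{-p_i'}\mathcal X_B$ plus $v_i^{-p_i'}(|B|^{1/n}+|x_B-\cdot|)^{-(n-\gamma_i+1/m)p_i'}\mathcal X_{\mathbb{R}^n\setminus B}$, so that $\prod_{i}\|f_iv_i\|_{p_i}$ reproduces the product of integrals in $\mathbb{H}_m(\vec p,\gamma,\delta)$, while a lower bound for $\int_B|I_{\gamma,m}\vec f-(I_{\gamma,m}\vec f)_B|$ recovers the factor $\|w\mathcal X_B\|_\infty|B|^{-(\delta-1)/n}$; the reverse Hölder hypothesis $v_i^{-p_i'}\in\mathrm{RH}_m$ together with Lemma~\ref{lema: equivalencia con local y global} lets one pass between \eqref{eq: clase Hbarra(p,gamma,delta) - m} and its local–global reformulation. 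The main obstacle in that direction is the mixed terms $\sigma\in S_m\setminus\{\mathbf 1,\mathbf 0\}$, where one must arrange the test functions so that the oscillation of the operator (numerator) and the Lebesgue norms (denominator) are simultaneously sharp; the purely local case $\sigma=\mathbf 1$ and the purely global case $\sigma=\mathbf 0$ are comparatively routine and correspond to \eqref{eq: condicion local} and \eqref{eq: condicion global}, respectively.
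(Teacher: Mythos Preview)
Your explanation of how the estimate for $I\vec f$ feeds into the implication \eqref{item: teo principal item 2}$\Rightarrow$\eqref{item: teo principal item 1} is correct and matches the paper: the decomposition $J_{\gamma,m}\vec f=a_B+I\vec f$, the inequality $\int_B|g-g_B|\le 2\int_B|g-a_B|$, and the bound $\int_B|I\vec f|\le C\|w\mathcal X_B\|_\infty^{-1}|B|^{1+\delta/n}\prod_i\|f_iv_i\|_{p_i}$ from Lemma~\ref{lema: finitud de J_gamma,m} are exactly the ingredients the paper uses. That is the full content of the remark.

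Your third paragraph, however, goes beyond the remark and mischaracterizes the paper's argument for \eqref{item: teo principal item 1}$\Rightarrow$\eqref{item: teo principal item 2}. The paper does not build test functions piecewise on $B$ and $\mathbb{R}^n\setminus B$ and then wrestle with mixed terms $\sigma\in S_m\setminus\{\mathbf 1,\mathbf 0\}$. Instead it rewrites the oscillation as a double integral $|B|^{-2}\int_B\int_B|J_{\gamma,m}\vec f(x)-J_{\gamma,m}\vec f(z)|$, restricts $(x,z)$ to the geometric sets $C_1\times C_2$ of Lemma~\ref{lema: diferencia de nucleos positiva}, and uses that lemma to obtain a pointwise lower bound on the kernel difference over the full quadrant $A^m$. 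This produces inequality~\eqref{eq: teo principal - eq2} directly, with the integrals over all of $\mathbb{R}^n$, so no local/global splitting or $S_m$ decomposition is needed. Also, the reverse H\"older hypothesis $v_i^{-p_i'}\in\mathrm{RH}_m$ is used only in the forward direction (inside Lemma~\ref{lema: finitud de J_gamma,m}); it plays no role in the necessity argument, and Lemma~\ref{lema: equivalencia con local y global} is not invoked there either.
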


Next we are going to set some geometrical facts that will be useful later. These results were set and proved in \cite{Pradolini01}. For a fixed ball $B=B(x_B,R)$ we define the sets 
\[A=\{x_B+h: h=(h_1,h_2,\dots,h_n): h_i\geq 0 \textrm{ for }1\leq i\leq n\},\]
\[C_1=B\left(x_B-\frac{R}{12\sqrt{n}}u,\frac{R}{12\sqrt{n}}\right)\cap\left\{x_B-\frac{R}{12\sqrt{n}}u+h: h_i\leq 0 \textrm{ for every }i\right\},\]
and
\[C_2=B\left(x_B-\frac{R}{3\sqrt{n}}u,\frac{2R}{3}\right)\cap\left\{x_B-\frac{R}{3\sqrt{n}}u+h: h_i\leq 0 \textrm{ for every }i\right\},\]
where $u=(1,1,\dots,1)$.  The following figure shows a sketch of these sets.
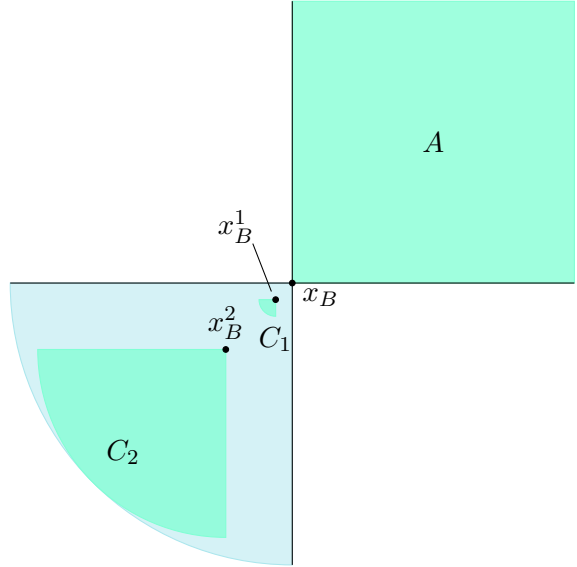
\begin{figure}[h!]
\begin{center}
	\begin{tikzpicture}[scale=0.75]
	\draw[color=aquamarine, fill=aquamarine, fill opacity=0.75] (0,0)--(5,0)--(5,5)--(0,5)--cycle;
	\draw[color=blizzardblue, fill=blizzardblue, fill opacity=0.5] (0,0)--(-5,0) arc(180:270:5)--cycle;
	\node at (2.5,2.5) {$A$};
	\draw(0,-5)--(0,5);
	\draw(-5,0)--(5,0);
	\draw[fill=black] (0,0) circle (0.05cm);
	\draw[color=aquamarine, fill=aquamarine, fill opacity=0.75](-0.2946,-0.2946)--(-0.5892,-0.2946) 
	arc(180:270:0.2946cm)--cycle;
	\draw[fill=black] (-0.2946,-0.2946) circle (0.05cm);
	\draw[color=aquamarine, fill=aquamarine, fill opacity=0.75](-1.1785,-1.1785)--(-4.5118,-1.1785) arc(180:270:3.3333cm)--cycle;
	\draw[fill=black] (-1.1785,-1.1785) circle (0.05cm);
	\node [right] at (0,-0.25) {$x_B$};
	\node [below] at (-0.3,-0.6) {$C_1$};
	\node at (-3,-3) {$C_2$};
	\node[above] at (-1.1785,-1.1785) {$x_B^2$};
	\node at (-1,1) {$x_B^1$};
	\draw(-0.7,0.7)--(-0.37,-0.15);
	\end{tikzpicture}	
\end{center}
\caption{The sets $A$, $C_1$ and $C_2$, where $x_B^1=x_B-R/(12\sqrt{n})u$ and $x_B^2=x_B-R/(4\sqrt{n})u$.}
\label{fig: conjuntos A, C1 y C2}	
\end{figure}

\begin{obs}\label{obs: medida de conjuntos C como B}
It is not difficult to see that $|C_i|\approx |B|$, for $i=1,2$.
\end{obs}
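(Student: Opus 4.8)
The plan is to recognize each $C_i$ as, up to a rigid motion of $\mathbb{R}^n$, precisely a $2^{-n}$-th part of a Euclidean ball whose radius is a fixed dimensional multiple of $R$, and then to compare volumes explicitly.

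First I would record the elementary fact that for any $c\in\mathbb{R}^n$ and any $\rho>0$,
\[
\bigl|\,B(c,\rho)\cap\{c+h:\ h_j\le 0\ \text{for all }j\}\,\bigr|=2^{-n}\,|B(c,\rho)|=2^{-n}\omega_n\rho^{n},
\]
where $\omega_n=|B(0,1)|$. Indeed, translating by $-c$ reduces this to the intersection of $B(0,\rho)$ with the closed negative orthant; the $2^{n}$ coordinatewise reflections $h\mapsto(\pm h_1,\dots,\pm h_n)$ are measure-preserving bijections of $B(0,\rho)$ that permute the $2^{n}$ orthant-pieces, and these pieces overlap pairwise only on a finite union of coordinate hyperplanes, hence on a null set. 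So each piece has measure $2^{-n}|B(0,\rho)|$.

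Both $C_1$ and $C_2$ are of exactly this form: writing $c_1=x_B-\tfrac{R}{12\sqrt n}u$ and $c_2=x_B-\tfrac{R}{3\sqrt n}u$, the set $C_1$ is the intersection of $B\!\left(c_1,\tfrac{R}{12\sqrt n}\right)$ with the negative orthant anchored at its own center $c_1$, and $C_2$ is the intersection of $B\!\left(c_2,\tfrac{2R}{3}\right)$ with the negative orthant anchored at its center $c_2$. Applying the displayed fact yields
\[
|C_1|=\frac{\omega_n}{2^{n}}\!\left(\frac{R}{12\sqrt n}\right)^{\!n}=\frac{|B|}{2^{n}(12\sqrt n)^{n}},\qquad
|C_2|=\frac{\omega_n}{2^{n}}\!\left(\frac{2R}{3}\right)^{\!n}=\frac{|B|}{3^{n}},
\]
using $|B|=\omega_n R^{n}$. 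The proportionality constants depend only on $n$, whence $|C_i|\approx|B|$ for $i=1,2$.

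There is essentially no obstacle here; the only point deserving a word of justification is the exact factor $2^{-n}$, supplied by the symmetry argument above. Should one wish to avoid the exact constant, the bound $|C_i|\lesssim|B|$ is immediate from $C_i\subset B(c_i,\rho_i)$ with $\rho_i\lesssim_n R$, while $|C_i|\gtrsim|B|$ follows by noting that $C_i$ contains the ball of radius $\tfrac{\rho_i}{2\sqrt n}\gtrsim_n R$ centered at $c_i-\tfrac{\rho_i}{2\sqrt n}u$, which lies well inside both the defining ball and the defining orthant.
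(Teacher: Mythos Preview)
Your argument is correct: each $C_i$ is, after translation by $-c_i$, exactly the intersection of a ball of radius $\rho_i$ (with $\rho_i$ a fixed dimensional multiple of $R$) with the closed negative orthant, and the reflection-symmetry argument cleanly yields the factor $2^{-n}$ and hence the explicit formulas $|C_1|=|B|/(2^{n}(12\sqrt n)^{n})$ and $|C_2|=|B|/3^{n}$. The paper does not supply a proof of this remark at all---it is simply asserted as ``not difficult to see''---so there is no approach to compare against; your write-up is precisely the sort of verification the reader is implicitly asked to perform, and the alternative inclusion argument you sketch at the end would also suffice if one only wanted the equivalence $|C_i|\approx|B|$ without exact constants.
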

The next lemma deals with the sets defined above and will be useful in the proof of our main result. The proof is similar to the corresponding result given in \cite{Pradolini01} for the case $m=1$ and we omit it. 

%

\begin{lema}\label{lema: diferencia de nucleos positiva}
	There exists a positive constant $C=C(n)$ such that the inequality
	\[\frac{1}{(\sum_{j=1}^m|x-y_j|)^{mn-\gamma}}-\frac{1}{(\sum_{j=1}^m|z-y_j|)^{mn-\gamma}}\geq C\frac{|B|^{1/n}}{(|B|^{1/n}+\sum_{j=1}^m|x_B-y_j|)^{mn-\gamma+1}}\]
	holds for every $x\in C_1$, $z\in C_2$, and $y_j\in A$ for $1\leq j\leq m$.
\end{lema}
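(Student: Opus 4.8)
The statement to prove is Lemma~\ref{lema: diferencia de nucleos positiva}: for $x\in C_1$, $z\in C_2$, and $y_j\in A$ for $1\le j\le m$, the difference of the two kernels is bounded below by a positive multiple of $|B|^{1/n}(|B|^{1/n}+\sum_j|x_B-y_j|)^{-(mn-\gamma+1)}$. The natural approach is to reduce the $m$-fold sum to a one-variable situation by controlling $\sum_j|x-y_j|$ and $\sum_j|z-y_j|$ simultaneously, using the geometry built into $A$, $C_1$, $C_2$, and then apply the elementary mean value / monotonicity estimate for the scalar function $t\mapsto t^{-(mn-\gamma)}$.

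First I would record the two geometric facts that make everything work. Writing $S_x=\sum_{j=1}^m|x-y_j|$ and $S_z=\sum_{j=1}^m|z-y_j|$, the cone structure forces a gap: since each $y_j\in A$ lies in the ``positive octant'' relative to $x_B$, while $C_1$ and $C_2$ sit on the opposite side (every coordinate of the displacement from $x_B$ is $\le 0$), one shows $S_z\ge S_x + c\,R$ for some dimensional constant $c>0$; intuitively, moving from a point of $C_1$ (which is within $O(R/\sqrt n)$ of $x_B$) to a point of $C_2$ (which is a definite distance $\gtrsim R$ further into the negative octant) increases the distance to every $y_j\in A$ by a comparable amount, and summing over the $m$ factors this only helps. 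Simultaneously I need an upper bound $S_x\le C\big(|B|^{1/n}+\sum_j|x_B-y_j|\big)$, which is immediate from $|x-y_j|\le |x-x_B|+|x_B-y_j|\le R/(6\sqrt n)+|x_B-y_j|$ and summing; similarly $R\approx |B|^{1/n}\lesssim |B|^{1/n}+\sum_j|x_B-y_j|$.

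With these in hand, set $a=S_x$ and $b=S_z$, so $0<a$ and $b\ge a+cR$. The difference of kernels is
\[
\frac{1}{a^{mn-\gamma}}-\frac{1}{b^{mn-\gamma}}.
\]
Since $mn-\gamma>0$, the function $\varphi(t)=t^{-(mn-\gamma)}$ is decreasing with $\varphi'(t)=-(mn-\gamma)t^{-(mn-\gamma+1)}$, so by the mean value theorem there is $\xi\in(a,b)$ with
\[
\varphi(a)-\varphi(b)=(mn-\gamma)\,\frac{b-a}{\xi^{mn-\gamma+1}}\ge (mn-\gamma)\,\frac{cR}{b^{mn-\gamma+1}}.
\]
Now I split according to whether $b\le 2a$ or $b>2a$. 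In the first range $b\le 2a\le 2\big(R/(6\sqrt n)+\sum_j|x_B-y_j|\big)\lesssim |B|^{1/n}+\sum_j|x_B-y_j|$, and since $R\approx|B|^{1/n}$ we get exactly the claimed lower bound. In the second range $b>2a$, so $a^{-(mn-\gamma)}-b^{-(mn-\gamma)}\ge a^{-(mn-\gamma)}(1-2^{-(mn-\gamma)})$, which is $\gtrsim a^{-(mn-\gamma)}$; here I use that $a=S_x\ge \frac{m}{2}R$ or so (each $|x-y_j|$ is bounded below in terms of $R$ by the cone gap already between $C_1$ and $A$) together with $a\approx R\approx|B|^{1/n}$ when all $|x_B-y_j|\lesssim R$, and otherwise absorb the extra factor of $R/(|B|^{1/n}+\sum|x_B-y_j|)\le 1$. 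Collecting the two cases yields the stated inequality with $C=C(n)$.

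\textbf{Main obstacle.} The routine analytic part (the mean value estimate and the case split on $b$ versus $a$) is straightforward; the real content is the geometric claim $S_z\ge S_x+cR$ with a \emph{dimensional} constant, i.e.\ verifying that for \emph{every} choice of $y_j$ in the octant $A$, moving the base point from anywhere in the small cone $C_1$ to anywhere in the larger cone $C_2$ strictly increases each summand $|{\cdot}-y_j|$ by an amount comparable to $R$. This is a careful but elementary computation with inner products: writing $x=x_B+h^x$, $z=x_B+h^z$ with all coordinates of $h^x,h^z$ nonpositive and $|h^z|\gtrsim R$ in the direction $u=(1,\dots,1)$, and $y_j=x_B+k_j$ with all coordinates of $k_j$ nonnegative, one estimates $|z-y_j|^2-|x-y_j|^2=|h^z|^2-|h^x|^2-2\langle h^z-h^x,k_j\rangle$ and uses $\langle h^z,k_j\rangle\le 0$ to conclude. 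Fortunately, as the excerpt notes, this is exactly the $m$-term analogue of the argument in \cite{Pradolini01} for $m=1$ — the sum over $j$ only makes the inequality more robust, since each term is individually controlled — so I would carry out the $m=1$ geometric estimate in detail and then simply sum over $j=1,\dots,m$.
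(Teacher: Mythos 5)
Your strategy is the intended one: the paper omits the proof of this lemma, deferring to the $m=1$ computation in \cite{Pradolini01}, and your plan (a per-term gap $|z-y_j|\ge|x-y_j|+c(n)R$ forced by the geometry of $A$, $C_1$, $C_2$, followed by the mean value theorem for $t\mapsto t^{-(mn-\gamma)}$ and the bound $\sum_j|z-y_j|\lesssim |B|^{1/n}+\sum_j|x_B-y_j|$) is exactly that argument summed over $j$, and it does prove the lemma. Two remarks. First, your case split $b\le 2a$ versus $b>2a$ is superfluous: every $z\in C_2$ satisfies $|z-x_B|\le R$, so directly $S_z\le mR+\sum_j|x_B-y_j|\lesssim |B|^{1/n}+\sum_j|x_B-y_j|$, and the mean value estimate evaluated at the endpoint $S_z$ already gives the stated bound. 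Second, and this is the only step where your sketch as literally written would not close: in the identity $|z-y_j|^2-|x-y_j|^2=|h^z|^2-|h^x|^2-2\langle h^z-h^x,k_j\rangle$, using only the sign information $\langle h^z,k_j\rangle\le 0$ produces a lower bound of order $R^2$ for the difference of squares, which for $|x_B-y_j|\gg R$ yields $|z-y_j|-|x-y_j|\gtrsim R^2/|x_B-y_j|$ and not the needed gap $\gtrsim R$ (worse, keeping the term $2\langle h^x,k_j\rangle$ has the unfavorable sign). What you must use is the quantitative coordinatewise separation between $C_1$ and $C_2$: translating so that $x_B=0$, every coordinate of $x\in C_1$ lies in $[-R/(6\sqrt n),\,-R/(12\sqrt n)]$ while every coordinate of $z\in C_2$ is at most $-R/(3\sqrt n)$, so for $y_j\in A$ (all coordinates nonnegative) one has $y_{j,i}-z_i\ge (y_{j,i}-x_i)+R/(6\sqrt n)$ with $y_{j,i}-x_i>0$, hence
\begin{equation*}
|z-y_j|^2\;\ge\; |x-y_j|^2+\frac{R}{3\sqrt n}\sum_{i=1}^n\bigl(y_{j,i}-x_i\bigr)+\frac{R^2}{36}\;\ge\; |x-y_j|^2+\frac{R}{3\sqrt n}\,|x-y_j|+\frac{R^2}{36}\;\ge\;\Bigl(|x-y_j|+\frac{R}{6\sqrt n}\Bigr)^2,
\end{equation*}
where the middle step uses $\sum_i(y_{j,i}-x_i)\ge|x-y_j|$. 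This gives the per-term gap with $c=1/(6\sqrt n)$; it is the cross term of size $R\,|x-y_j|$, not the $R^2$ term, that handles distant $y_j$. With this correction your argument is complete, and summing over $j$ and applying your mean value step yields the lemma.
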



\section{Proof of the main results}\label{section: prueba principal}

We devote this section to prove the results contained in Section~\ref{seccion: introduccion}.

\begin{proof}[Proof of Theorem~\ref{teo: teo principal}]
	We shall first prove that $(\ref{item: teo principal item 2})$ implies $(\ref{item: teo principal item 1})$. We shall deal with the operator $J_{\gamma,m}$ since it differs from $I_{\gamma,m}$ by a constant term. We want to prove that for every ball $B$
	\begin{equation}\label{eq: teo principal - eq1}
	\frac{\|w\mathcal{X}_B\|_\infty}{|B|^{1+\delta/n}}\int_B |J_{\gamma,m}\vec{f}(x)-(J_{\gamma,m}\vec{f})_B|\,dx\leq C\prod_{i=1}^m\|f_iv_i\|_{p_i},
	\end{equation}
	with $C$ independent of $B$. Fix a ball $B=B(x_B,R)$ and recall that $J_{\gamma,m}\vec{f}(x)=a_B+I\vec{f}(x)$. In  Lemma~\ref{lema: finitud de J_gamma,m} we proved that
	\begin{equation*}
	\int_B|I\vec{f}(x)|\,dx\leq C\frac{|B|^{1+\delta/n}}{\|w\mathcal{X}_B\|_\infty}\prod_{i=1}^m\|f_iv_i\|_{p_i},
	\end{equation*}
	which implies that
	\begin{equation}\label{eq: teo principal - estimacion de If}
	\int_B|J_{\gamma,m}\vec{f}(x)-a_B|\,dx\leq C\frac{|B|^{1+\delta/n}}{\|w\mathcal{X}_B\|_\infty}\prod_{i=1}^m\|f_iv_i\|_{p_i}.
	\end{equation}
	On the other hand, observe that
	\begin{align*}
	\int_B |J_{\gamma,m}\vec f (x)-(J_{\gamma,m}\vec f)_B|\,dx&\leq \int_B|J_{\gamma,m}\vec{f}(x)-a_B|\,dx+\int_B|(J_{\gamma,m}\vec f)_B-a_B|\,dx\\
	&\leq \int_B|J_{\gamma,m}\vec{f}(x)-a_B|\,dx+\int_B\frac{1}{|B|}\int_B|J_{\gamma,m}\vec f(y)-a_B|\,dy\,dx\\
	&\leq 2\int_B|J_{\gamma,m}\vec{f}(x)-a_B|\,dx.
	\end{align*}
	By combining this estimate with \eqref{eq: teo principal - estimacion de If} we obtain the desired inequality.
	
	We now prove that $(\ref{item: teo principal item 1})$ implies $(\ref{item: teo principal item 2})$. Assume that the component functions $f_i$ of $\vec{f}$ are nonnegative. We have that \eqref{eq: teo principal - eq1} holds for every ball $B=B(x_B,R)$. Also observe that
	\[\frac{1}{|B|}\int_B|g(x)-g_B|\,dx\approx \frac{1}{|B|^2}\int_B\int_B|g(x)-g(z)|\,dx\,dz,\]
	and therefore the left hand side of \eqref{eq: teo principal - eq1} is equivalent to
	\[\frac{\|w\mathcal{X}_B\|_\infty}{|B|^{2+\delta/n}}\int_B\int_B |J_{\gamma,m}\vec{f}(x)-J_{\gamma,m}\vec{f}(z)|\,dx\,dz=:I.\]
	Observe that, when $y_i\in B$ for every $i$ we have  
	\[|B|^{1/n}+|x_B-y_j|\geq \frac{1}{m}\left(|B|^{1/n}+\sum_{i=1}^m|x_B-y_i|\right),\]
	for every $1\leq j\leq m$.
	By combining Lemma~\ref{lema: diferencia de nucleos positiva} and Remark~\ref{obs: medida de conjuntos C como B} with the inequality above we can estimate $I$ as follows
	\begin{align*}
	I&\geq \frac{\|w\mathcal{X}_B\|_\infty}{|B|^{2+\delta/n}}\int_{C_2}\int_{C_1} \int_{A^m} \frac{|B|^{1/n}\prod_{i=1}^m f_i(y_i)}{(|B|^{1/n}+\sum_{i=1}^m|x_B-y_i|)^{mn-\gamma+1}}\,d\vec{y}\,dx\,dz\\
	&\geq C\frac{\|w\mathcal{X}_B\|_\infty}{|B|^{(\delta-1)/n}}\prod_{i=1}^m\left(\int_A \frac{f_i(y_i)}{(|B|^{1/n}+|x_B-y_i|)^{n-\gamma_i+1/m}}\,dy_i\right).
	\end{align*}
	Since the set $A$ is a quadrant from $x_B$, a similar estimation can be obtained for the other quadrants from $x_B$. Thus, we get
	\[I\geq C\frac{\|w\mathcal{X}_B\|_\infty}{|B|^{(\delta-1)/n}}\prod_{i=1}^m\left(\int_{\mathbb{R}^n} \frac{f_i(y)}{(|B|^{1/n}+|x_B-y|)^{n-\gamma_i+1/m}}\,dy\right),\]
	which implies that
	\begin{equation}\label{eq: teo principal - eq2}
	\frac{\|w\mathcal{X}_B\|_\infty}{|B|^{(\delta-1)/n}}\prod_{i=1}^m\left(\int_{\mathbb{R}^n} \frac{f_i(y)}{(|B|^{1/n}+|x_B-y|)^{n-\gamma_i+1/m}}\,dy\right)\leq C\prod_{i=1}^m\|f_iv_i\|_{p_i}.
	\end{equation}
	For every $i\in \mathcal{I}_1$ and $k\in\mathbb{N}$ we define $V_k^i=\{x: v_i^{-1}(x)\leq k\}$ and the functionals
	\[F_i^k(g)=\int_{\mathbb{R}^n}\frac{g(y)v_i^{-1}(y)\mathcal{X}_{V_k^i}(y)}{(|B|^{1/n}+|x_B-y|)^{n-\gamma_i+1/m}}\,dy.\] 
	Therefore $F_i^k$ is a functional in $(L^1)^*=L^{\infty}$. Indeed, if $g\in L^1$
	\[|F_i^k(g)|\leq \|g\|_{L^1} \left\|\frac{v_i^{-1}\mathcal{X}_{V_k^i}}{(|B|^{1/n}+|x_B-\cdot|)^{n-\gamma_i+1/m}}\right\|_\infty<\infty,\]
	and we also get
	\[\frac{|F_i^k(f_iv_i)|}{\|f_iv_i\|_{L^1}}\leq \left\|\frac{v_i^{-1}\mathcal{X}_{V_k^i}}{(|B|^{1/n}+|x_B-\cdot|)^{n-\gamma_i+1/m}}\right\|_\infty,\]
	for every $i\in\mathcal{I}_1$.
	
	If $i\in \mathcal{I}_2$ then we set $A_k=A\cap B(0,k)$ and consider 
	\[f_i^k (y)=\frac{v_i^{-p_i'}(y)}{(|B|^{1/n}+|x_B-y|)^{(n-\gamma_i+1/m)/(p_i-1)}}\mathcal{X}_{A_k}(y)\mathcal{X}_{V_k^i}(y).\]
	
	Let us choose $\vec f=(f_1,\dots,f_m)$, where $f_i$ is such that $f_iv_i\in L^1$ for $p_i=1$ and $f_i=f_i^k$ for $p_i>1$, for $k$ fixed. Therefore, the left hand side of \eqref{eq: teo principal - eq2} can be written as follows 
	\[\frac{\|w\mathcal{X}_B\|_\infty}{|B|^{(\delta-1)/n}}\prod_{i\in \mathcal{I}_1}F_i^k(f_iv_i)\prod_{i\in \mathcal{I}_2}\left(\int_{A_k\cap V_k^i} \frac{v_i^{-p_i'}(y)}{(|B|^{1/n}+|x_B-y|)^{(n-\gamma_i+1/m)p_i'}}\,dy\right)\]
	and it is bounded by
	\[ C\prod_{i\in\mathcal{I}_1}\|f_iv_i\|_{L^1}\prod_{i\in\mathcal{I}_2}\left(\int_{A_k\cap V_k^i} \frac{v_i^{-p_i'}(y)}{(|B|^{1/n}+|x_B-y|)^{(n-\gamma_i+1/m)p_i'}}\,dy\right)^{1/p_i}.\]
	This yields
	\[\frac{\|w\mathcal{X}_B\|_\infty}{|B|^{(\delta-1)/n}}\prod_{i\in\mathcal{I}_1}\frac{|F_i^k(f_iv_i)|}{\|f_iv_i\|_{L^{1}}}\prod_{i\in\mathcal{I}_2}\left(\int_{A_k\cap V_k^i} \frac{v_i^{-p_i'}(y)}{(|B|^{1/n}+|x_B-y|)^{(n-\gamma_i+1/m)p_i'}}\,dy\right)^{1/p_i'}\leq C,\]
	for every nonnegative $f_i$ such that $f_iv_i\in L^1$, $i\in\mathcal{I}_1$ and for every $k\in\mathbb{N}$. By taking the supremum over these $f_i$ we get
	\[\frac{\|w\mathcal{X}_B\|_\infty}{|B|^{(\delta-1)/n}}\prod_{i\in\mathcal{I}_1}\left\|\frac{v_i^{-1}}{(|B|^{1/n}+|x_B-\cdot|)^{n-\gamma_i+1/m}}\right\|_\infty\prod_{i\in\mathcal{I}_2}\left(\int \frac{v_i^{-p_i'}\mathcal{X}_{A_k\cap V_k^i}}{(|B|^{1/n}+|x_B-\cdot|)^{(n-\gamma_i+1/m)p_i'}}\right)^{\tfrac{1}{p_i'}}\leq C.\]
	By taking limit for $k\to\infty$, the left hand side converges to
	\[\frac{\|w\mathcal{X}_B\|_\infty}{|B|^{(\delta-1)/n}}\prod_{i\in\mathcal{I}_1}\left\|\frac{v_i^{-1}}{(|B|^{1/n}+|x_B-\cdot|)^{n-\gamma_i+1/m}}\right\|_\infty\prod_{i\in\mathcal{I}_2}\left(\int_{\mathbb{R}^n} \frac{v_i^{-p_i'}(y)}{(|B|^{1/n}+|x_B-y|)^{(n-\gamma_i+1/m)p_i'}}\,dy\right)^{1/p_i'},\]
	which is precisely the condition $\mathbb{H}_m(\vec{p},\gamma,\delta)$. This completes the proof.\qedhere

\end{proof}

We now proceed to prove Theorem~\ref{teo: no-ejemplos Hbb}.

\begin{proof}[Proof of Theorem~\ref{teo: no-ejemplos Hbb}]
	Let us begin with item~\eqref{item: teo no-ejemplos Hbb - item a}.  We shall first assume that 
	$\delta >1$.  If $(w,\vec{v}) \in~\mathbb{H}_m(\vec{p},\gamma,\delta)$, we choose $B=B(x_B, R)$ where $x_B$ is a Lebesgue point of $w^{-1}$. From \eqref{eq: clase Hbarra(p,gamma,delta) - m} we obtain 
	\begin{align*}
	\prod_{i\in\mathcal{I}_1}\left\|\frac{v_i^{-1}}{(|B|^{1/n}+|x_B-\cdot|)^{n-\gamma_i+1/m}}\right\|_\infty\,\prod_{i\in\mathcal{I}_2}\left(\int_{\mathbb{R}^n} \frac{v_i^{-p_i'}}{(|B|^{1/n}+|x_B-\cdot|)^{(n-\gamma_i+1/m)p_i'}}\right)^{\tfrac{1}{p_i'}}&\lesssim \frac{|B|^{(\delta-1)/n}}{\|w\mathcal{X}_B\|_\infty}\\
	&\lesssim \frac{w^{-1}(B)}{|B|R^{1-\delta}},
	\end{align*}
	for every $R>0$. By letting $R\to 0$ and applying the monotone convergence theorem, we conclude that at least one limit factor in the product should be zero. That is, there exists $1\leq i \leq m$ such that $v_i = \infty$ almost everywhere. 
	
	On the other hand, if $\delta > \gamma - n/p$ and $(w,\vec{v})$ belongs to  $\mathbb{H}_m(\vec{p},\gamma,\delta)$, we pick a ball $B=B(x_B, R)$, where $x_B$ is a Lebesgue point of $w^{-1}$ and every $v_i^{-1}$. Then we have
	\begin{align*}
	\prod_{i=1}^m \frac{1}{|B|}\int_B v_i^{-1}&\leq \prod_{i\in\mathcal{I}_1}\left\|v_i^{-1}\mathcal{X}_B\right\|_\infty\prod_{i\in\mathcal{I}_2}\left( \frac{1}{|B|}\int_B v_i^{-p'_i} \right)^{1/p'_i }\\
	&\leq C \frac{|B|^{\frac{\delta}{n}-\frac{\gamma}{n}+\frac{1}{p}  } }{||w\mathcal{X}_B||_{\infty}}\\
	&\leq C \frac{w^{-1}(B)}{|B|} R^{\delta - \gamma + n/p}
	\end{align*}	
	for every $R>0$. By letting $R\to 0$ we get
	\begin{equation*}
	\prod_{i=1 }^{m} v_{i}^{-1}(x_B)=0, 
	\end{equation*}	
	which yields that $\prod_{i=1 }^{m} v_{i}^{-1}$ is zero almost everywhere. This implies that the set $M=\bigcap_{i=1}^m \{v_i^{-1} >0 \}$ has null measure. Since $v_i(y)>0$ for almost every $y$ and every $i$, there exists $j$ such that $v_j = \infty$ almost everywhere.
	
	We turn now our attention to \eqref{item: teo no-ejemplos Hbb - item b}. Suppose $\delta= \gamma - n/p =1$. We shall prove that if $(w,\vec{v})\in~\mathbb{H}_m(\vec{p}, \gamma, 1)$, there exists $j$ such that $v_j = \infty$ in almost $\mathbb{R}^n$. We define
	\begin{equation*}
	\alpha = \sum_{i=1 }^{m }\frac{1}{p'_i} = m - \frac{1}{p}.
	\end{equation*}
	By applying Hölder inequality we obtain that
	\begin{equation*}
	\left(\int_{\mathbb{R}^n } \frac{(\prod_{i\in\mathcal{I}_2} v_i^{-1 })^{1/\alpha}}{(|B|^{1/n} + |x_B -y|)^{\sum_{i\in\mathcal{I}_2}(n-\gamma_i +1/m)/\alpha }} \right)^{\alpha} 
	\leq C \prod_{i\in\mathcal{I}_2} 
	\left(
	\int_{\mathbb{R}^n } \frac{ v_i^{-p'_i }}{(|B|^{1/n} + |x_B -\cdot|)^{(n-\gamma_i + 1/m)p'_i} } \right)^{\tfrac{1}{p_i'}}
	\end{equation*}
	and since $(w,\vec{v})\in \mathbb{H}_{m}(\vec{p}, \gamma, 1)$ this implies that
	\[\prod_{i\in \mathcal{I}_1}\left\|\frac{v_i^{-1}}{(|B|^{1/n}+|x_B-\cdot|)^{n-\gamma_i+1/m}}\right\|_\infty\left(\int_{\mathbb{R}^n } \frac{(\prod_{i\in\mathcal{I}_2} v_i^{-1 })^{1/\alpha}}{(|B|^{1/n} + |x_B -y|)^{\sum_{i\in\mathcal{I}_2}(n-\gamma_i +1/m)/\alpha }} \right)^{\alpha}\lesssim \frac{w^{-1}(B)}{|B|},\]
	and furthermore
	\[\left(\int_{\mathbb{R}^n } \frac{(\prod_{i=1}^m v_i^{-1 })^{1/\alpha}}{(|B|^{1/n} + |x_B -y|)^{(mn-\gamma +1)/\alpha }}\right) ^{\alpha}\lesssim \frac{w^{-1}(B)}{|B|}\]
	for every ball $B=B(x_B,R)$.
	
	We now use an adaptation of an argument of \cite{Pradolini01}. If the set $E=\{x: ~ \prod_{i=1 }^m v_i^{-1 }(x)>~0\}$ has positive measure we write $E=\bigcup_{k\geq 1 }E_k$, where $E_k= \{x: ~ \prod_{i=1 }^m v_i^{-1 }(x)> 1/k\}$. Then there exists  $k_0$ verifying $|E_{k_0 }| >0$. Let $x_B$ be a Lebesgue point of $w^{-1}$ which also is a density point in $E_{k_0 }$ such that $w^{-1}(x_B)<\infty$. By letting $R\to 0$ and observing that $(mn-\gamma +1)/\alpha = n$ we obtain
	\begin{equation*}
	\left(\int_{\mathbb{R}^n } \frac{(\prod_{i=1 }^m v_i^{-1 })^{1/\alpha}}{ |x_B -y|^{n}} \right)^{\alpha}
	=
	\left(\int_{\mathbb{R}^n } \frac{(\prod_{i=1 }^m v_i^{-1 })^{1/\alpha}}{ |x_B -y|^{(mn-\gamma+1)/\alpha}} \right)^{\alpha} 
	\leq Cw^{-1}(x_B),
	\end{equation*}
	so the left-hand side integral is finite for almost every $x_B\in E_{k_0}$. On the other hand, we have
	\begin{equation}\label{e3}
	\left(\int_{\mathbb{R}^n } \frac{(\prod_{i=1 }^m v_i^{-1 })^{1/\alpha}}{ |x_B -y|^{n}} \right)^{\alpha}
	\geq 
	\left(\int_{B\cap E_{k_0 }} \frac{(\prod_{i=1 }^m v_i^{-1 })^{1/\alpha}}{ |x_B -y|^{n}} \right)^{\alpha}
	\geq \frac{1}{k_0} 
	\left(\int_{B\cap E_{k_0 }} \frac{dy}{ |x_B -y|^{n}} \right)^{\alpha}.
	\end{equation}
	We now define $\varepsilon (R):= (|B|-|B\cap E_{k_0 }|)^{1/n }=|B\backslash E_{k_0 }|^{1/n }$. Since $x_B$ is a density point of $E_{k_0 }$ we have that 
	\begin{equation*}
	\frac{\varepsilon (R)}{R} = c \left(\frac{|B| - |B\backslash E_{k_0 }|}{|B|} \right)^{1/n }\to  0
	\end{equation*}	
	when $R$ approaches to zero. If we take $C= \{y:~ \varepsilon(R)\leq |x_B -y|\leq R \}$, then
	\begin{equation*}
	|C|=c(R^n - \varepsilon^{n}(R))= c (|B|-(|B|-|B\cap E_{k_0 }|))
	=|B\cap E_{k_0}|.
	\end{equation*}
	We also have  
	\begin{equation*}
	|C|=|B\cap E_{k_0 }|=|(B\cap E_{k_0 })\cap C| + |(B\cap E_{k_0 })-C|
	\end{equation*}
	and therefore 
	\begin{equation*}
	|C\backslash(B\cap E_{k_0 }\cap C)|= |C|-|B\cap E_{k_0 }\cap C|
	= |B\cap E_{k_0 }\backslash C|.
	\end{equation*}
	Since 
	\begin{equation*}
	\sup_{C\backslash(B\cap E_{k_0 }\cap C)}|x_0 - y|^{-n}
	\leq 	\inf_{B\cap E_{k_0 }\backslash C}|x_0 - y|^{-n}
	\end{equation*}
	we return to \eqref{e3} and write 
	\begin{align*}
	\left(\int_{\mathbb{R}^n } \frac{(\prod_{i=1 }^m v_i^{-1 })^{1/\alpha}}{ |x_B -y|^{n}} \right)^{\alpha}
	&\geq 
	\frac{1}{k_0} \left(\int_{B\cap E_{k_0} \cap C } \frac{dy}{|x_B -y|^{n}} 
	+
	\int_{(B\cap E_{k_0}) \backslash C } \frac{dy}{|x_B -y|^{n}}
	\right)^{\alpha}\\
	&\geq 
	\frac{1}{k_0} \left(\int_{B\cap E_{k_0} \cap C } \frac{dy}{|x_B -y|^{n}} 
	+
	\int_{C\backslash(B\cap E_{k_0} \cap C) } \frac{dy}{|x_B -y|^{n}}
	\right)^{\alpha}\\
	&=\frac{1}{k_0}\left(\int_{C } \frac{dy}{|x_B -y|^{n}} \right)^{\alpha} = \frac{1}{k_0} \left(\int_{\varepsilon(R) }^R \frac{dr}{r}\right)^{\alpha }\\
	&= \frac{1}{k_0}\ln\left(\frac{R}{\varepsilon(R)}\right)^{\alpha},
	\end{align*}
	which approaches to $\infty$ when $R\to 0$, a contradiction. This yields 
	$|E|=0$, that is, $\prod_{i=1 }^m v_i^{-1} =0$ almost everywhere, from where we can deduce that there exists  $j$ satisfying $v_j = \infty$ almost everywhere.
	
	We finish with the proof of item~\eqref{item: teo no-ejemplos Hbb - item c}. If $\delta<\gamma-mn$,  given a ball $B=B(x_B, R)$ and $B_0\subset B$ the condition \eqref{eq: condicion local} implies that
	\begin{equation*}
	\|w\mathcal{X}_{B_0}\|_\infty \prod_{i=1}^{m}\|v_i^{-1 }\mathcal{X}_{B_0}\|_{p'_i}\leq \|w\mathcal{X}_{B}\|_\infty \prod_{i=1 }^{m}\|v_i^{-1 }\mathcal{X}_{B}\|_{p'_i}\leq CR^{\delta -\gamma+mn}.
	\end{equation*}
	Observe that the right-hand side of the inequality above tends to zero when $R$ tends to $\infty$. This implies that either $\|w\mathcal{X}_{B_0}\|_\infty =0$ or $\|v_i^{-1 }\mathcal{X}_{B_0}\|_{p'_i}=0$, for some  $i$. By the arbitrariness of $B_0$ we obtain either $w=0$ or $v_i =\infty$ for some $i$, respectively.
\end{proof}

\section{The class \texorpdfstring{$\mathbb{H}_m(\vec{p},\gamma,\delta)$}{$Hm(p,\gamma,\delta)$}}\label{seccion: ejemplos}
 We begin this section by exhibiting nontrivial pairs of weights satisfying condition $\mathbb{H}_m(\vec{p},\gamma,\delta)$. Concretely, we shall prove the following theorem.
\begin{teo}\label{teo: ejemplos para Hbb}
	Given $0<\gamma<mn$ there exist pairs of weights $(w,\vec{v})$ satisfying \eqref{eq: clase Hbarra(p,gamma,delta) - m} for every $\vec{p}$ and $\delta$ such that $\gamma-mn\leq \delta\leq \min\{1,\gamma-n/p\}$, excluding the case $\delta=1$ when $\gamma-n/p=1$.
\end{teo}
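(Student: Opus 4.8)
The plan is to construct the weights explicitly as radial power functions centered at the origin and verify condition \eqref{eq: clase Hbarra(p,gamma,delta) - m} directly. Concretely, I would set $v_i(y)=|y|^{-\beta_i}$ for suitable exponents $\beta_i\ge 0$ (so that $v_i^{-p_i'}=|y|^{\beta_i p_i'}$ is a nonnegative power, hence in $\mathrm{RH}_\infty$ and locally integrable) and $w(x)=|x|^{\eta}$ for a suitable $\eta$, choosing the parameters so that the homogeneity of the left-hand side of \eqref{eq: clase Hbarra(p,gamma,delta) - m} in the dilation $B\mapsto \lambda B$ is exactly $0$, which forces one linear relation among the $\beta_i$, $\eta$, $\delta$, $\gamma$, $n$, $p$. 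Since a single scaling identity is not enough to pin down all exponents, the remaining freedom should be used to split the job: each of the $m$ integral factors $\int_{\mathbb{R}^n} v_i^{-p_i'}(y)/(|B|^{1/n}+|x_B-y|)^{(n-\gamma_i+1/m)p_i'}\,dy$ must converge (near $y=0$ and near $y=\infty$), which gives the constraints $\beta_i p_i'>-n$ (automatic) and $(n-\gamma_i+1/m)p_i'-\beta_i p_i'>n$, i.e. $\beta_i<\tfrac1m-\gamma_i-n/p_i'$ on the decay side; and $\|w\mathcal X_B\|_\infty$ must be controlled, which is where the sign of $\eta$ and the split into the cases $\delta\le 1$ versus $\delta<\gamma-n/p$ enters.

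The key steps, in order: (1) reduce to balls centered at the origin by the standard trick of comparing a general ball $B(x_B,R)$ with $B(0,|x_B|+R)$ — but here it is cleaner to use the local-to-global Lemma~\ref{lema: local implica global en Hbarra}, so that it suffices to check the \emph{local} condition \eqref{eq: condicion local} when $\delta<\tau$, and handle the range $\tau\le\delta\le\min\{1,\gamma-n/p\}$ separately (this range is exactly where $\delta=\gamma-n/p$ by Theorem~\ref{teo: caso pesos iguales}-type reasoning, so the weights there are essentially forced to be in $A_{\vec p,\infty}$ and can be taken to be power weights with a direct verification); (2) for the local condition, compute $\|w\mathcal X_B\|_\infty$ for $w=|x|^\eta$: if $\eta\ge 0$ it equals $(|x_B|+R)^\eta$ and if $\eta<0$ it equals $(\max\{0,|x_B|-R\})^{\eta}$ or blows up, so one takes $\eta\le 0$ small in absolute value and the worst case is $x_B$ near $0$, giving $\|w\mathcal X_B\|_\infty\approx R^{\eta}$ there; (3) compute $\big(|B|^{-1}\int_B v_i^{-p_i'}\big)^{1/p_i'}\approx (|x_B|+R)^{\beta_i}$ when $\beta_i\ge0$, again worst case $\approx R^{\beta_i}$ near the origin; (4) assemble: the local condition near the origin becomes $R^{\eta}\cdot R^{-\delta/n\cdot n+\gamma-n/p}\cdot\prod R^{\beta_i}\lesssim 1$, which holds as an equality of exponents precisely when $\eta+\sum\beta_i=\delta-\gamma+n/p$, and one checks the inequality (not just scaling) holds for $x_B$ away from the origin too; (5) invoke Lemma~\ref{lema: local implica global en Hbarra} to upgrade to \eqref{eq: clase Hbarra(p,gamma,delta) - m} on the range $\delta<\tau$, and glue with the direct construction on $\tau\le\delta\le\min\{1,\gamma-n/p\}$; (6) finally verify the $\mathrm{RH}_m$ (equivalently $\mathrm{RH}_\infty$) hypothesis needed to apply the main theorem — automatic for positive powers — and record that the constructed pair is nontrivial, i.e. $v_i<\infty$ a.e. and $w>0$ a.e., which holds since power functions vanish/blow up only at a single point.

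The main obstacle I anticipate is the \textbf{boundary/degenerate cases}: making the construction work simultaneously at both ends of the parameter range $\gamma-mn\le\delta\le\min\{1,\gamma-n/p\}$, and in particular handling $\delta=1$ (allowed only when $\gamma-n/p\ne 1$), $\delta=\gamma-n/p$, and $\delta=\gamma-mn$, where some of the exponent inequalities above become equalities and the naive power weights either fail the convergence of the defining integral or fail to be locally integrable. The fix is to perturb the power-weight exponents by an arbitrarily small amount (using the slack in the strict inequalities away from those boundaries) or, at the extreme endpoints, to insert a logarithmic correction factor $\log(e+|y|)^{\pm s}$ or to restrict attention to $\delta$ strictly interior and then note the endpoints follow by a limiting/monotonicity argument on the class. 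A secondary technical nuisance is that the comparison $\|w\mathcal X_B\|_\infty\approx R^{\eta}$ only holds for balls meeting the origin; for far-away balls one must check the condition with the genuinely larger quantity $(|x_B|+R)^{\eta}$ and confirm the product still stays bounded, which works because for $\eta\le 0$ moving the ball away from the origin only decreases $\|w\mathcal X_B\|_\infty$ while the integral factors grow at most like $(|x_B|+R)^{\beta_i}$ — a monotonicity check I would spell out but expect to be routine once the exponent bookkeeping from step (4) is in place.
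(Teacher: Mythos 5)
Your plan works, in essence, only on the subrange $\gamma-mn<\delta<\tau$: there your steps (2)--(5) (power weights, reduction to the local condition \eqref{eq: condicion local}, and the local-to-global Lemma~\ref{lema: local implica global en Hbarra}) are exactly the paper's cases for $\delta<\tau$, up to two sign slips that you would have to fix (taking $w=|x|^{\eta}$ with $\eta<0$ makes $\|w\mathcal{X}_B\|_\infty=\infty$ for every ball containing the origin, so the condition would force the $v_i$ to be trivial --- the paper takes a \emph{nonnegative} power for $w$ and puts the negative powers into $v_i^{-p_i'}$; and the decay constraint on the integral factors is $\beta_i<n/p_i-\gamma_i+1/m$, not $1/m-\gamma_i-n/p_i'$). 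The genuine gap is your treatment of $\tau\le\delta\le\min\{1,\gamma-n/p\}$. Your claim that ``this range is exactly where $\delta=\gamma-n/p$ by Theorem~\ref{teo: caso pesos iguales}-type reasoning'' is false: that theorem only concerns the related-weight case $w=\prod_i v_i$, whereas for general pairs $(w,\vec{v})$ the class is nontrivial on the whole two-dimensional region $\tau\le\delta<\gamma-n/p$ (see the figures in \S~\ref{seccion: ejemplos}). On this region Lemma~\ref{lema: local implica global en Hbarra} is unavailable, so one must verify the global condition \eqref{eq: condicion global} directly; this is the bulk of the paper's proof (its cases (c), (d), (e)), requiring the dyadic decomposition of $\int_{\mathbb{R}^n\setminus B}$ into the sums $S_1^i+S_2^i$, a case analysis on the sign of $\theta_i=n/p_i+(1-\gamma)/m$, logarithmic factors when $\theta_i=0$, and a final argument that the resulting exponent of $|x_B|/R$ is negative using $\delta\le\gamma-n/p$ (by contradiction when some exponents vanish). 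None of this is supplied by ``a direct verification,'' and it does not reduce to the scaling bookkeeping of your step (4).

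The second gap is the endpoint $\delta=\gamma-mn$. Pure power weights cannot work there: the local condition at balls centered at the origin forces $\alpha+n(m-1/p)=\sum_i\beta_i$ while local integrability of $v_i^{-p_i'}$ forces $\sum_i\beta_i<n(m-1/p)$, hence $\alpha<0$, which (as above) is inadmissible; and your fallback of ``perturb the exponents'' or ``a limiting/monotonicity argument on the class'' does not produce a weight at the endpoint, since the classes $\mathbb{H}_m(\vec{p},\gamma,\delta)$ for different $\delta$ are different conditions with no monotone passage to the limit. The paper instead uses a structurally different example there: $w(x)=(1+|x|^{\alpha})^{-m_1}$, $v_i=e^{|x|}$ for $i\in\mathcal{I}_1$ and $v_i=g_i^{-1}$ with $g_i\in L^{p_i'}$ for $i\in\mathcal{I}_2$, for which the local condition is immediate. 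So the proposal is on the right track for the low-$\delta$ regime but misses the two pieces that carry the real difficulty of the theorem.
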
 

The following figure shows the area in which we can find nontrivial weights, depending on the value of $\gamma$.

\begin{center}
	\begin{tikzpicture}[scale=0.75]
	\node[above] at (-4,6) {$\gamma>1$};
	\draw [-stealth, thick] (-6,-5)--(-6,5);
	\draw [-stealth, thick] (-7,0)--(-1,0);
	\draw [thick] (-6.05,3)--(-5.95, 3);
	\node [left] at (-6,5) {$\delta$};
	\node [left] at (-6,3) {$1$};
	\node [below] at (-1,0) {$1/p$};
	\node [below] at (-2,0) {$m$};
	\draw [thick] (-2,0.05)--(-2,-0.05);
	\draw [thick] (-6.05,-4)--(-5.95, -4);
	\node [left] at (-6,-4) {$\gamma-mn$};
	\draw [fill=aquamarine, fill opacity=0.5] (-6,-4)--(-2,-4)--(-4,3)--(-6,3)--cycle;
	\draw [dashed, thick] (-6,1)--(-3.4286,1);
	\node [left] at (-6,1) {$\tau$};
	\draw [fill=white] (-4,3) circle (0.08cm);
	\node [right] at (-3.5,2) {$\delta=\gamma-n/p$};
	\node[above] at (3,6) {$\gamma=1$};
	\draw [-stealth, thick] (1,-5)--(1,5);
	\draw [-stealth, thick] (0,0)--(6,0);
	\draw [thick] (0.95,3)--(1.05, 3);
	\node [left] at (1,5) {$\delta$};
	\node [left] at (1,3) {$1$};
	\node [below] at (6,0) {$1/p$};
	\node [below] at (5,0) {$m$};
	\draw [thick] (5,0.05)--(5,-0.05);
	\draw [thick] (0.95,-4)--(1.05, -4);
	\node [left] at (1,-4) {$\gamma-mn$};
	\draw [fill=aquamarine, fill opacity=0.5] (1,-4)--(5,-4)--(1,3)--cycle;
	\draw [dashed, thick] (1,1)--(2.152857,1);
	\node [left] at (1,1) {$\tau$};
	\draw [fill=white] (1,3) circle (0.08cm);
	\node [right] at (2,2) {$\delta=\gamma-n/p$};
	\node[above] at (10,6) {$\gamma<1$};
	\draw [-stealth, thick] (8,-5)--(8,5);
	\draw [-stealth, thick] (7,0)--(13,0);
	\draw [thick] (7.95,3)--(8.05, 3);
	\node [left] at (8,5) {$\delta$};
	\node [left] at (8,3) {$1$};
	\node [below] at (13,0) {$1/p$};
	\node [below] at (12,0) {$m$};
	\draw [thick] (12,0.05)--(12,-0.05);
	\draw [thick] (7.95,-4)--(8.05, -4);
	\node [left] at (8,-4) {$\gamma-mn$};
	\draw [fill=aquamarine, fill opacity=0.5] (8,-4)--(12,-4)--(8,2)--cycle;
	\draw [dashed, thick] (8,1)--(8.6667,1);
	\node [left] at (8,1) {$\tau$};
	\node [right] at (8.5,2) {$\delta=\gamma-n/p$};
	\end{tikzpicture}
\end{center}

\noindent In order to prove Theorem~\ref{teo: ejemplos para Hbb}, it will be useful the following lemma (see \cite{Pradolini01}).

\begin{lema}\label{lema: estimacion de la integral de |x|^a en una bola}
	If $R>0$, $B=B(x_B,R)$ is a ball in $\mathbb{R}^n$ and $\alpha>-n$ then
	\[\int_B |x|^{\alpha}\,dx\approx R^n\left(\max\{R,|x_B|\}\right)^\alpha.\]
\end{lema}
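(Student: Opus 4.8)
The plan is to split the argument according to whether the center $x_B$ is far from the origin (compared with the radius $R$) or not, and in each regime to reduce the estimate to an elementary radial integral.

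\textbf{Far regime.} First I would treat the case $|x_B|\ge 2R$. Here every $x\in B$ satisfies $\tfrac12|x_B|\le |x|\le \tfrac32|x_B|$, so $|x|^\alpha\approx |x_B|^\alpha$ on $B$ with constants depending only on $\alpha$ (note $|x_B|>0$). Integrating over $B$ yields
\[\int_B|x|^\alpha\,dx\approx |x_B|^\alpha|B|\approx R^n|x_B|^\alpha=R^n\bigl(\max\{R,|x_B|\}\bigr)^\alpha,\]
since $\max\{R,|x_B|\}=|x_B|$ in this range.

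\textbf{Near regime.} For $|x_B|<2R$ one has $\max\{R,|x_B|\}\approx R$, so it suffices to prove $\int_B|x|^\alpha\,dx\approx R^{n+\alpha}$. The upper bound is immediate: $B\subset B(0,3R)$, and passing to polar coordinates (which is where the hypothesis $\alpha>-n$ is used to guarantee integrability),
\[\int_B|x|^\alpha\,dx\le \int_{B(0,3R)}|x|^\alpha\,dx=c_n\int_0^{3R}s^{\alpha+n-1}\,ds=\frac{c_n}{\alpha+n}(3R)^{\alpha+n}\lesssim R^{\alpha+n}.\]
For the lower bound I would produce a subball of $B$ of radius $\approx R$ staying away from the origin. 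If $x_B=0$ the estimate is again a direct polar-coordinate computation; if $x_B\ne 0$, set $x'=x_B+\tfrac{3R}{4}\,x_B/|x_B|$, so that $B(x',R/8)\subset B$ (because $|x'-x_B|=3R/4$ and $3R/4+R/8<R$), while every $x\in B(x',R/8)$ obeys $|x'|-\tfrac R8\le |x|\le |x'|+\tfrac R8$ with $\tfrac{3R}{4}\le |x'|\le \tfrac{11R}{4}$, hence $\tfrac{5R}{8}\le |x|\le 3R$. Thus $|x|^\alpha\approx R^\alpha$ on $B(x',R/8)$ and
\[\int_B|x|^\alpha\,dx\ge \int_{B(x',R/8)}|x|^\alpha\,dx\gtrsim R^\alpha\,|B(x',R/8)|\approx R^{\alpha+n}.\]

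Putting the two regimes together gives the claimed equivalence, with implicit constants depending only on $n$ and $\alpha$. There is no genuine obstacle here: the argument is elementary, and the only point that needs a little care is the lower bound in the near regime, where one must choose the auxiliary subball so that it simultaneously lies inside $B$ and is bounded away from the singularity of $|x|^\alpha$ at the origin (relevant when $\alpha<0$); moving the center outward along the ray through $x_B$ achieves both.
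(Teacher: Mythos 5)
Your proof is correct: the split into the far regime $|x_B|\ge 2R$ (where $|x|\approx|x_B|$ on $B$) and the near regime $|x_B|<2R$ (polar coordinates for the upper bound, and a subball of radius $\approx R$ pushed outward along the ray through $x_B$ to stay away from the origin for the lower bound, which is exactly where $\alpha>-n$ and the possible singularity of $|x|^\alpha$ matter) is the standard argument. The paper does not prove this lemma but simply quotes it from \cite{Pradolini01}, so there is nothing to compare beyond noting that your elementary argument is the expected one.
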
 

\medskip

\begin{proof}[Proof of Theorem~\ref{teo: ejemplos para Hbb}]
Recalling that $\tau=(\gamma-mn)(1-1/m)+1/m$ is the number appearing in Lemma~\ref{lema: local implica global en Hbarra}, we shall split the proof into the following cases:
\begin{enumerate}[\rm(a)]
	\item \label{item: prueba de teo: ejemplos para Hbb - item a}$\gamma-mn<\delta<\tau\leq \gamma-n/p$;
	\item \label{item: prueba de teo: ejemplos para Hbb - item b}$\gamma-mn<\delta\leq \gamma-n/p<\tau$;
	\item \label{item: prueba de teo: ejemplos para Hbb - item c}$\gamma-mn<\delta=\tau<1<\gamma-n/p$; 
	\item \label{item: prueba de teo: ejemplos para Hbb - item d} $\gamma-mn<\delta=\tau<\gamma-n/p<1$;
	\item \label{item: prueba de teo: ejemplos para Hbb - item e} $\tau<\delta <\min\{1,\gamma-n/p\}$;
	\item \label{item: prueba de teo: ejemplos para Hbb - item f}$\delta=\gamma-mn$.
\end{enumerate}
Let us prove \eqref{item: prueba de teo: ejemplos para Hbb - item a}. Recall that $\mathcal{I}_1=\{i: p_i=1\}$, $\mathcal{I}_2=\{i: p_i>1\}$ and $m_j=\#\mathcal{I}_j$, for $j=1,2$. Since  $m_1<m$ by the restrictions on the parameters, we can take
\[0<\varepsilon<\frac{mn-\gamma+\delta}{m-m_1}.\]
For $1\leq i\leq m$ we define
\[\beta_i=\left\{\begin{array}{ccl}
0 & \textrm{ if } & i\in\mathcal{I}_1,\\
\frac{n}{p_i'}-\varepsilon & \textrm{ if } & i\in\mathcal{I}_2.
\end{array}
\right.
\]
 Let $\alpha=\sum_{i=1}^m \beta_i+\delta-\gamma+n/p>0$. Then we take
\[w(x)=|x|^{\alpha}\quad\textrm{ and }\quad v_{i}(x)=|x|^{\beta_i}.\]
By virtue of Lemma~\ref{lema: local implica global en Hbarra} it will be enough to show that $(w,\vec{v})$ verifies condition \eqref{eq: condicion local}. Let $B=B(x_B, R)$ and $|x_B|\leq R$, by Lemma~\ref{lema: estimacion de la integral de |x|^a en una bola} if $i\in\mathcal{I}_2$ we get
\begin{equation*}
\left(\frac{1}{|B|} \int_B v_i^{-p'_i}
\right)^{1/{p_i'}}=
\left(\frac{1}{|B|} \int_B |x|^{-\beta_ip'_i}\, dx
\right)^{1/{p_i'}}\approx R^{-\beta_i},
\end{equation*}
and $\left\|v_i^{-1}\mathcal{X}_B\right\|_\infty=1$ for $i\in\mathcal{I}_1$.
On the other hand, $\|w\mathcal{X}_B\|_{\infty}\lesssim R^\alpha$
since $\alpha>0$. Therefore,  
\[\frac{\|w\mathcal{X}_B\|_{\infty}}{|B|^{\delta/n-\gamma/n+1/p}}\prod_{i\in\mathcal{I}_1}\left\|v_i^{-1}\mathcal{X}_B\right\|_{\infty}\prod_{i\in \mathcal{I}_2}\left(\frac{1}{|B|}\int_B v_i^{-p_i'}\right)^{1/p_i'}\leq CR^{\alpha -\sum_{i=1 }^{m}\beta_i-\delta +\gamma -n/p}\leq C.\]

We now consider the case $|x_B|>R$. We have that 
\begin{equation*}
\|w\mathcal{X}_B\|_{\infty} \lesssim |x_B|^{\alpha}
\end{equation*}
whilst for $i\in\mathcal{I}_2$	
\begin{equation*}
\left(\frac{1}{|B|} \int_B |x|^{-\beta_i p'_i}\,dx \right)^{1/{p_i'}} \approx |x_B|^{-\beta_i}.
\end{equation*}
Consequently, since $\delta<\gamma-n/p$
\begin{equation*}
\frac{\|w\mathcal{X}_B\|_{\infty}}{|B|^{\delta/n-\gamma/n+1/p}}\prod_{i\in\mathcal{I}_1}\left\|v_i^{-1}\mathcal{X}_B\right\|_{\infty}\prod_{i\in \mathcal{I}_2}\left(\frac{1}{|B|}\int_B v_i^{-p_i'}\right)^{1/p_i'}\leq C
|x_B|^{\alpha - \sum_{i=1}^m \beta_i -\delta +\gamma -n/p} \leq C,
\end{equation*}
which completes the proof of \eqref{item: prueba de teo: ejemplos para Hbb - item a}. 

We now prove \eqref{item: prueba de teo: ejemplos para Hbb - item b}. In this case we take $w=1$ and $v_i=|x|^{\beta_i}$,  $\beta_i=(\gamma-\delta)/m-n/p_i$ for every $1\leq i\leq m$. By Lemma~\ref{lema: local implica global en Hbarra} it will be enough to prove that $(w,\vec{v})$ satisfies condition \eqref{eq: condicion local}. Pick a ball $B=B(x_B,R)$ and assume that $|x_B|\le R$. Observe that for every $i\in\mathcal{I}_1$ we get $\beta_i<0$, since we are assuming $\delta>\gamma-mn$. Then, for $i\in\mathcal{I}_1$ we get
\[\left\|v_i^{-1}\mathcal{X}_B\right\|_\infty\approx R^{-\beta_i}.\]
On the other hand, for $i\in\mathcal{I}_2$ we have $\beta_i<n/p_i'$, so Lemma~\ref{lema: estimacion de la integral de |x|^a en una bola} yields
\[\left(\frac{1}{|B|}\int_B v_i^{-p_i'}\right)^{1/p_i'}\approx R^{-\beta_i}.\]
These two estimates imply that
\[\frac{\|w\mathcal{X}_B\|_{\infty}}{|B|^{\delta/n-\gamma/n+1/p}}\prod_{i\in\mathcal{I}_1}\left\|v_i^{-1}\mathcal{X}_B\right\|_{\infty}\prod_{i\in \mathcal{I}_2}\left(\frac{1}{|B|}\int_B v_i^{-p_i'}\right)^{1/p_i'}\leq C \frac{R^{-\sum_{i=1}^m\beta_i}}{R^{\delta-\gamma+n/p}}=C.\]
If $|x_B|>R$, we have that $\left\|v_i^{-1}\mathcal{X}_B\right\|_{\infty}\lesssim |x_B|^{-\beta_i}$ and also 
\[\left(\frac{1}{|B|}\int_B v_i^{-p_i'}\right)^{1/p_i'}\approx |x_B|^{-\beta_i}\]
by Lemma~\ref{lema: estimacion de la integral de |x|^a en una bola}. Thus
\[\frac{\|w\mathcal{X}_B\|_{\infty}}{|B|^{\delta/n-\gamma/n+1/p}}\prod_{i\in\mathcal{I}_1}\left\|v_i^{-1}\mathcal{X}_B\right\|_{\infty}\prod_{i\in \mathcal{I}_2}\left(\frac{1}{|B|}\int_B v_i^{-p_i'}\right)^{1/p_i'}\leq C \frac{|x_B|^{-\sum_{i=1}^m\beta_i}}{R^{\delta-\gamma+n/p}}\leq C,\]
since $\delta\leq\gamma-n/p$. This concludes the proof of item~\eqref{item: prueba de teo: ejemplos para Hbb - item b}. 

In order to prove \eqref{item: prueba de teo: ejemplos para Hbb - item c} we pick $(\gamma-\tau)/m-n/p_i<\beta_i<n/p_i'$ for every $i\in\mathcal{I}_2$ and $\beta_i=0$ for $i\in\mathcal{I}_1$. Notice that this election is possible since $\gamma-\tau<mn$. We also take $\alpha=\sum_{i=1}^m\beta_i+\tau-\gamma+n/p$ and define $w(x)=|x|^\alpha$ and $v_i(x)=|x|^{\beta_i}$, for $1\leq i\leq m$. By virtue of Lemma~\ref{lema: equivalencia con local y global} it will be enough to show that condition \eqref{eq: condicion global} holds, since every $v_i$ is a doubling weight. We shall prove that
\[\frac{\|w\mathcal{X}_B\|_\infty}{|B|^{(\delta-1)/n}}\prod_{i\in\mathcal{I}_1}\left\|\frac{v_i^{-1}\mathcal{X}_{\mathbb{R}^n\backslash B}}{|x_B-\cdot|^{n-\gamma/m+1/m}}\right\|_\infty\prod_{i\in\mathcal{I}_2}\left(\int_{\mathbb{R}^n\backslash B} \frac{v_i^{-p_i'}(y)}{|x_B-y|^{(n-\gamma/m+1/m)p_i'}}\,dy\right)^{1/p_i'}\leq C\]
for every ball $B=B(x_B, R)$. We first notice that
\[\prod_{i\in\mathcal{I}_1}\left\|\frac{v_i^{-1}\mathcal{X}_{\mathbb{R}^n\backslash B}}{|x_B-\cdot|^{n-\gamma/m+1/m}}\right\|_\infty\leq R^{-\sum_{i\in\mathcal{I}_1}(n-\gamma/m+1/m)},\]
so there will be enough to show that
\begin{equation}\label{eq: teo: ejemplos para Hbb - item c - eq1}
R^{1-\delta-\sum_{i\in\mathcal{I}_1}(n-\gamma/m+1/m)}\|w\mathcal{X}_B\|_\infty\prod_{i\in\mathcal{I}_2}\left(\int_{\mathbb{R}^n\backslash B} \frac{v_i^{-p_i'}(y)}{|x_B-y|^{(n-\gamma/m+1/m)p_i'}}\,dy\right)^{1/p_i'}\leq C
\end{equation}
for every ball $B$. We shall first assume that $|x_B|\leq R$. For every $k\in\mathbb{N}$ we take $B_k=B\left(x_B, 2^kR\right)$ and for $i\in\mathcal{I}_2$, by Lemma~\ref{lema: estimacion de la integral de |x|^a en una bola}, we write
\begin{align*}
\left(\int_{\mathbb{R}^n\backslash B} \frac{v_i^{-p_i'}(y)}{|x_B-y|^{(n-\gamma/m+1/m)p_i'}}\,dy\right)^{1/p_i'}&\lesssim \sum_{k=1}^\infty (2^kR)^{-n+\gamma/m-1/m}\left(\int_{B_{k+1}\backslash B_k} |y|^{-\beta_ip_i'}\,dy\right)^{1/p_i'}\\
&\lesssim \sum_{k=1}^\infty (2^kR)^{-n+\gamma/m-1/m-\beta_i+n/p_i'}\\
&\lesssim R^{-n/p_i+\gamma/m-1/m-\beta_i},
\end{align*}\label{pag: estimacion para i fuera de I_1, |x_B|<=R}
since $-n/p_i+\gamma/m-1/m-\beta_i<0$ by the election of $\beta_i$. Then the left-hand side of \eqref{eq: teo: ejemplos para Hbb - item c - eq1} is bounded by
\[CR^{1-\delta-\sum_{i\in\mathcal{I}_1}(n-\gamma/m+1/m)+\alpha-\sum_{i\in\mathcal{I}_2}(n/p_i-\gamma/m+1/m+\beta_i)}=CR^{-\delta-n/p+\gamma+\alpha-\sum_{i=1}^m\beta_i}=C.\]

We now assume $|x_B|>R$. There exists a number $N$ such that $2^NR<|x_B|\leq 2^{N+1}R$. If $i\in\mathcal{I}_2$ we have that
\begin{align*}
\left(\int_{\mathbb{R}^n\backslash B} \frac{v_i^{-p_i'}(y)}{|x_B-y|^{(n-\gamma/m+1/m)p_i'}}\,dy\right)^{1/p_i'}&\lesssim \sum_{k=1}^\infty(2^{k}R)^{-n+\gamma/m-1/m}\left(\int_{B_k} |y|^{-\beta_ip_i'}\,dy\right)^{1/p_i'}\\
&=\sum_{k=1}^N+\sum_{k=N+1}^\infty\\
&=S_1^i+S_2^i.
\end{align*}\label{pag: estimacion del producto para i fuera de I_1, |x_B|>R}
Let $\theta_i=n/p_i+(1-\gamma)/m$, for $1\leq i\leq m$. We shall estimate the sum $S_1^i+S_2^i$, for $i\in\mathcal{I}_2$, by distinguishing into the cases $\theta_i<0$, $\theta_i=0$ and $\theta_i>0$. We shall first prove that if $\theta_i<0$, then
\begin{equation}\label{eq: teo: ejemplos para Hbb - item c - eq2}
S_j^i\leq C|x_B|^{-\beta_i-\theta_i},
\end{equation} 
for $j=1,2$. Indeed, by Lemma~\ref{lema: estimacion de la integral de |x|^a en una bola} we obtain
\begin{align*}
S_1^i&\lesssim \sum_{k=1}^N(2^{k}R)^{-n+\gamma/m-1/m+n/p_i'}|x_B|^{-\beta_i}\\
&\lesssim |x_B|^{-\beta_i}R^{-\theta_i}\sum_{k=1}^N 2^{-k\theta_i}\\
&\lesssim |x_B|^{-\beta_i}(2^NR)^{-\theta_i}\\
&\lesssim |x_B|^{-\beta_i-\theta_i},
\end{align*}
since $\theta_i<0$. For $S_2^i$ we apply again Lemma~\ref{lema: estimacion de la integral de |x|^a en una bola} in order to get
\begin{align*}
S_2^i&\lesssim \sum_{k=N+1}^\infty(2^{k}R)^{-n+\gamma/m-1/m+n/p_i'-\beta_i}\\
&\lesssim \sum_{k=N+1}^\infty \left(2^{k}R\right)^{-\beta_i-\theta_i}\\
&= \left(2^{N+1}R\right)^{-\beta_i-\theta_i}\sum_{k=0}^\infty 2^{-k(\beta_i+\theta_i)}\\
&\lesssim |x_B|^{-\beta_i-\theta_i},
\end{align*}
since $\theta_i+\beta_i=n/p_i+(1-\gamma)/m+\beta_i>0$.

Now assume that $\theta_i=0$. By proceeding similarly as in the previous case, we have
\[S_1^i\lesssim |x_B|^{-\beta_i}N\lesssim |x_B|^{-\beta_i}\log_2\left(\frac{|x_B|}{R}\right),\]
and
\[S_2^i\lesssim |x_B|^{-\beta_i}\]
since $\beta_i>0$ when $\theta_i=0$. Consequently,
\begin{equation}\label{eq: teo: ejemplos para Hbb - item c - eq3}
S_1^i+S_2^i\lesssim |x_B|^{-\beta_i}\left(1+\log_2\left(\frac{|x_B|}{R}\right)\right)\lesssim |x_B|^{-\beta_i}\log_2\left(\frac{|x_B|}{R}\right).
\end{equation}

We finally consider the case $\theta_i>0$. For $S_2^i$ we can proceed exactly as in the case $\theta_i<0$ and get the same bound. On the other hand, for $S_1^i$ we have that
\begin{align*}
S_1^i&\lesssim \sum_{k=1}^N(2^{k}R)^{-n+\gamma/m-1/m+n/p_i'}|x_B|^{-\beta_i}\\
&\lesssim |x_B|^{-\beta_i}R^{-\theta_i}\sum_{k=1}^N 2^{-k\theta_i}\\
&\lesssim |x_B|^{-\beta_i}\left(2^NR\right)^{-\theta_i}2^{N\theta_i}\,\frac{1-2^{-N\theta_i}}{1-2^{-\theta_i}}\\
&\lesssim |x_B|^{-\beta_i-\theta_i}2^{N\theta_i}.
\end{align*}\label{pag: estimacion de S_1^i y S_2^i,  theta_i>0}
Therefore, if $i\in\mathcal{I}_2$ and $\theta_i>0$ we get
\begin{equation}\label{eq: teo: ejemplos para Hbb - item c - eq4}
S_1^i+S_2^i\lesssim |x_B|^{-\beta_i-\theta_i}\left(1+2^{N\theta_i}\right)\lesssim 2^{N\theta_i}|x_B|^{-\beta_i-\theta_i}. 
\end{equation}
By combining \eqref{eq: teo: ejemplos para Hbb - item c - eq2}, \eqref{eq: teo: ejemplos para Hbb - item c - eq3} and \eqref{eq: teo: ejemplos para Hbb - item c - eq4} we obtain
\begin{align*}
\prod_{i\in\mathcal{I}_2}\left(\int_{\mathbb{R}^n\backslash B} \frac{v_i^{-p_i'}(y)}{|x_B-y|^{(n-\gamma/m+1/m)p_i'}}\,dy\right)^{1/p_i'}&\lesssim \prod_{i\in\mathcal{I}_2, \theta_i<0} |x_B|^{-\beta_i-\theta_i} \prod_{i\in\mathcal{I}_2, \theta_i=0} |x_B|^{-\beta_i}\log_2\left(\frac{|x_B|}{R}\right) \\
&\quad\times\prod_{i\in\mathcal{I}_2, \theta_i>0} |x_B|^{-\beta_i-\theta_i}2^{N\theta_i} \\
&\lesssim |x_B|^{-\sum_{i\in\mathcal{I}_2}(\beta_i+\theta_i)}2^{N\sum_{i\in\mathcal{I}_2,\theta_i> 0}\theta_i}\\
&\quad \times\left(\log_2\left(\frac{|x_B|}{R}\right)\right)^{\#\{i\in\mathcal{I}_2, \theta_i=0\}},
\end{align*}
so the left-hand side of \eqref{eq: teo: ejemplos para Hbb - item c - eq1} can be bounded by
\[CR^{1-\delta-(n-\gamma/m+1/m)m_1}|x_B|^{\alpha-\sum_{i\in\mathcal{I}_2}(\beta_i+\theta_i)}2^{N\sum_{i\in\mathcal{I}_2,\theta_i> 0}\theta_i}\left(\log_2\left(\frac{|x_B|}{R}\right)\right)^{\#\{i\in\mathcal{I}_2, \theta_i=0\}}\]
which is equal to
\begin{equation}\label{eq: teo: ejemplos para Hbb - item c - eq5}
C\left(\frac{|x_B|}{R}\right)^{\tau-1+(n-\gamma/m+1/m)m_1+\sum_{i\in\mathcal{I}_2,\theta_i> 0}\theta_i}\left(\log_2\left(\frac{|x_B|}{R}\right)\right)^{\#\{i\in\mathcal{I}_2, \theta_i=0\}}.
\end{equation}

Since $\theta_i<n+(1-\gamma)/m$ for $i\in\mathcal{I}_2$, there exists $\varepsilon>0$ that verifies
\[\sum_{i\in\mathcal{I}_2,\theta_i> 0}\theta_i+\varepsilon\#\{i\in\mathcal{I}_2, \theta_i=0\}\leq \left(n+\frac{1-\gamma}{m}\right)\#\{i\in\mathcal{I}_2, \theta_i>0\}.\]
By using the fact that $\log_2 t\lesssim \varepsilon^{-1}t^{\varepsilon}$ for every $t\geq 1$, we can majorize \eqref{eq: teo: ejemplos para Hbb - item c - eq5} by a constant factor provided that
\[\tau-1+\left(n+\frac{1-\gamma}{m}\right)\left(m_1+\#\{i\in\mathcal{I}_2: \theta_i>0\}\right)\leq \tau-1+\left(n+\frac{1-\gamma}{m}\right)(m-1)=0.\label{pag: estimacion del log_2(|x_B|/R)}\]
Indeed, if this last inequality did not hold, then we would have that $\theta_i>0$ for every $i\in\mathcal{I}_2$. We also observe that $\theta_i>0$ for $i\in\mathcal{I}_1$. This would lead to $n/p>\gamma-1$, a contradiction.

In order to prove \eqref{item: prueba de teo: ejemplos para Hbb - item d} we only need to consider two cases. If there exists some $i\in\mathcal{I}_2$ such that $\theta_i\leq 0$, the proof follows exactly as in \eqref{item: prueba de teo: ejemplos para Hbb - item c}. If not, that is $\theta_i>0$ for every  $i\in\mathcal{I}_2$, observe that
\begin{align*}
\tau-1+\left(n+\frac{1-\gamma}{m}\right)m_1+\sum_{i\in\mathcal{I}_2,\theta_i> 0}\theta_i&=\tau-1+\sum_{i=1}^m \left(\frac{n}{p_i}+\frac{1-\gamma}{m}\right)\\
&=\tau+\frac{n}{p}-\gamma\\
&<0,
\end{align*}
so we can choose $\varepsilon>0$ small enough so that the resulting exponent for $|x_B|/R$ in \eqref{eq: teo: ejemplos para Hbb - item c - eq5} is negative. This concludes the proof in this case.

We now proceed with the proof of \eqref{item: prueba de teo: ejemplos para Hbb - item e} we take
$\alpha=\delta-\tau>0$ and $\beta_i=(1-\tau)/m-\theta_i$, for every $i$. Then we define $w(x)=|x|^\alpha$ and $v_i=|x|^{\beta_i}$, $1\leq i\leq m$. These functions are locally integrable since $\alpha>0$ and $\beta_i<n/p_i'$. Furthermore, $\beta_i<0$ for $i\in\mathcal{I}_1$, so $v_i^{-1}\in \mathrm{RH}_\infty$ for these index. Then, by Lemma~\ref{lema: equivalencia con local y global}, it will be enough to show that condition \eqref{eq: condicion global} holds. Fix a ball $B=B(x_B, R)$ and assume that $|x_B|<R$. Then we get
\begin{equation}\label{eq: teo: ejemplos para Hbb - item e - eq1}
\frac{\|w\mathcal{X}_B\|_\infty}{|B|^{(\delta-1)/n}}\lesssim R^{1-\delta+\alpha}=R^{1-\tau}.
\end{equation}

On the other hand, if $i\in\mathcal{I}_1$ we have
\begin{align*}
\left\|\frac{v_i^{-1}\mathcal{X}_{\mathbb{R}^n\backslash B}}{|x_B-\cdot|^{n-\gamma/m+1/m}}\right\|_\infty&\lesssim \sum_{k=0}^\infty \left\|\frac{v_i^{-1}\mathcal{X}_{B_{k+1}\backslash B_k}}{|x_B-\cdot|^{n-\gamma/m+1/m}}\right\|_\infty\\
&\lesssim \sum_{k=0}^\infty \left(2^kR\right)^{-\beta_i-n+\gamma/m-1/m}\\
&\lesssim   R^{(\tau-1)/m},
\end{align*}
since $\tau<\delta<1$. This yields
\begin{equation}\label{eq: teo: ejemplos para Hbb - item e - eq2}
\prod_{i\in\mathcal{I}_1}\left\|\frac{v_i^{-1}\mathcal{X}_{\mathbb{R}^n\backslash B}}{|x_B-\cdot|^{n-\gamma/m+1/m}}\right\|_\infty\lesssim R^{m_1(\tau-1)/m}.
\end{equation}
Finally, since $\beta_i+\theta_i=(1-\tau)/m>0$ for $i\in\mathcal{I}_2$, we can proceed as in page~\pageref{pag: estimacion para i fuera de I_1, |x_B|<=R} to obtain
\begin{equation}\label{eq: teo: ejemplos para Hbb - item e - eq3}
\prod_{i\in\mathcal{I}_2}\left(\int_{\mathbb{R}^n\backslash B}\frac{v_i^{-p_i'}(y)}{|x_B-y|^{(n-\gamma/m+1/m)p_i'}}\,dy\right)^{1/p_i'}\lesssim R^{m_2(\tau-1)/m}.
\end{equation}
By combining \eqref{eq: teo: ejemplos para Hbb - item e - eq1}, \eqref{eq: teo: ejemplos para Hbb - item e - eq2} and \eqref{eq: teo: ejemplos para Hbb - item e - eq3}, the left-hand side of \eqref{eq: condicion global} is bounded by a constant $C$.
We now consider the case $|x_B|>R$. We have that
\begin{equation}\label{eq: teo: ejemplos para Hbb - item e - eq4}
\frac{\|w\mathcal{X}_B\|_\infty}{|B|^{(\delta-1)/n}}\lesssim R^{1-\delta}|x_B|^\alpha.
\end{equation}
Since $|x_B|>R$, there exists a number $N\in\mathbb{N}$ such that $2^NR<|x_B|\leq 2^{N+1}R$. For $i\in\mathcal{I}_1$ we write
\begin{align*}
\left\|\frac{v_i^{-1}\mathcal{X}_{\mathbb{R}^n\backslash B}}{|x_B-\cdot|^{n-\gamma/m+1/m}}\right\|_\infty&\lesssim \sum_{k=0}^N \left\|\frac{v_i^{-1}\mathcal{X}_{B_{k+1}\backslash B_k}}{|x_B-\cdot|^{n-\gamma/m+1/m}}\right\|_\infty+\sum_{k=N+1}^\infty \left\|\frac{v_i^{-1}\mathcal{X}_{B_{k+1}\backslash B_k}}{|x_B-\cdot|^{n-\gamma/m+1/m}}\right\|_\infty\\
&=S_1^i+S_2^i.
\end{align*}
By applying Lemma~\ref{lema: estimacion de la integral de |x|^a en una bola} and proceeding as in page~\pageref{pag: estimacion de S_1^i y S_2^i,  theta_i>0} with $p_i=1$ we have that
\begin{equation}\label{eq: teo: ejemplos para Hbb - item e - eq5}
\prod_{i\in\mathcal{I}_1}\left\|\frac{v_i^{-1}\mathcal{X}_{\mathbb{R}^n\backslash B}}{|x_B-\cdot|^{n-\gamma/m+1/m}}\right\|_\infty\lesssim |x_B|^{-\sum_{i\in\mathcal{I}_1}(\beta_i+\theta_i)}2^{N\sum_{i\in\mathcal{I}_1}\theta_i}.
\end{equation}
Finally, if $i\in\mathcal{I}_2$ our choice of $\beta_i$ allows us to follow the argument given in page~\pageref{pag: estimacion del producto para i fuera de I_1, |x_B|>R} to conclude that
\begin{align*}
\prod_{i\in\mathcal{I}_2}\left(\int_{\mathbb{R}^n\backslash B} \frac{v_i^{-p_i'}(y)}{|x_B-y|^{(n-\gamma/m+1/m)p_i'}}\,dy\right)^{1/p_i'}
&\lesssim |x_B|^{-\sum_{i\in\mathcal{I}_2}(\beta_i+\theta_i)}2^{N\sum_{i\in\mathcal{I}_2,\theta_i> 0}\theta_i}\\
&\quad \times\left(\log_2\left(\frac{|x_B|}{R}\right)\right)^{\#\{i\in\mathcal{I}_2, \theta_i=0\}}.
\end{align*}
By combining the inequality above with \eqref{eq: teo: ejemplos para Hbb - item e - eq4} and \eqref{eq: teo: ejemplos para Hbb - item e - eq5}, the left-hand side of \eqref{eq: condicion global} can be bounded by
\[CR^{1-\delta} |x_B|^{\alpha-\sum_{i=1}^m(\theta_i+\beta_i)}2^{N\sum_{i:\theta_i> 0}\theta_i}
\left(\log_2\left(\frac{|x_B|}{R}\right)\right)^{\#\{i\in\mathcal{I}_2, \theta_i=0\}}\]
or equivalently by
\[C\left(\frac{R}{|x_B|}\right)^{1-\delta-\sum_{i: \theta_i>0}\theta_i}\left(\log_2\left(\frac{|x_B|}{R}\right)\right)^{\#\{i\in\mathcal{I}_2, \theta_i=0\}}.\]
If $\theta_i<0$ for every $i$ then the exponent of $R/|x_B|$ is positive. On the other hand, if $\theta_i\geq 0$ for every $i$ then
\[1-\delta-\sum_{i: \theta_i>0}\theta_i=1-\delta-\sum_{i=1}^m \theta_i=1-\delta-\frac{n}{p}+\gamma-1>0,\]
since $\delta<\gamma-n/p$. In both cases we can repeat the argument given in page~\pageref{pag: estimacion del log_2(|x_B|/R)} to conclude that $(w,\vec{v})$ belongs to $\mathbb{H}_m(\vec{p},\gamma,\delta)$. Let us observe that, for example, if $\gamma\leq1$ then every $\theta_i$ is nonnegative.

We finish with the proof of item~\eqref{item: prueba de teo: ejemplos para Hbb - item f}. In this case we fix $\alpha>0$ and take $w(x)=\left(1+|x|^\alpha\right)^{-m_1}$. If $g_i$ are nonnegative fixed functions in $L^{p_i'}(\mathbb{R}^n)$ for $i\in\mathcal{I}_2$, we define
\[v_i(x)=\left\{
\begin{array}{ccl}
e^{|x|}&\textrm{ if }& i\in\mathcal{I}_1,\\
g_i^{-1}&\textrm{ if }& i\in\mathcal{I}_2.
\end{array}
\right.\] 
Fix a ball $B=B(x_B, R)$. It is enough to check condition \eqref{eq: condicion local}, since $\delta=\gamma-mn<\tau$. Notice that 
\[\left\|w\mathcal{X}_B\right\|_\infty\prod_{i\in\mathcal{I}_1}\left\|v_i^{-1}\mathcal{X}_B\right\|_\infty\leq \prod_{i\in\mathcal{I}_1}\left\|(1+|\cdot|^{\alpha})^{-1}\mathcal{X}_B\right\|_\infty\left\|e^{-|\cdot|}\mathcal{X}_B\right\|_\infty\leq 1.\]
Therefore,
\begin{align*}
\frac{\|w\mathcal{X}_B\|_{\infty}}{|B|^{\delta/n-\gamma/n+1/p}}\prod_{i\in\mathcal{I}_1}\left\|v_i^{-1}\mathcal{X}_B\right\|_{\infty}\prod_{i\in \mathcal{I}_2}\left(\frac{1}{|B|}\int_B v_i^{-p_i'}\right)^{1/p_i'}&\leq |B|^{-\delta/n+\gamma/n-m}\prod_{i\in\mathcal{I}_2}\|g_i\|_{p_i'}\\
&\leq C\prod_{i\in\mathcal{I}_2}\|g_i\|_{p_i'},
\end{align*}
for every ball $B$. This concludes the proof of \eqref{item: prueba de teo: ejemplos para Hbb - item f}.
\end{proof}

We finish with the proof of Theorem~\ref{teo: caso pesos iguales}.

\begin{proof}
	Let $\vec{v}\in \mathbb{H}_m({\vec{p},\gamma,\delta})$ and $B$ be a ball. Since condition \eqref{eq: condicion local} holds, then we have that
	\begin{equation*}
		|B|^{\gamma/n-\delta/n-1/p}\left\|\mathcal{X}_B\prod_{i=1}^m v_i\right\|_\infty\prod_{i\in\mathcal{I}_1}\left\|v_i^{-1}\mathcal X_B\right\|_{\infty}\, \prod_{i\in\mathcal{I}_2}\left(\frac{1}{|B|}\int_{B} v_i^{-p_i'}(y)\,dy\right)^{1/p_i'}\leq C,	
	\end{equation*}
	for some positive constant $C$.	This implies that 
	\begin{equation*}
		{|B|^{\gamma/n-\delta/n-1/p}}\prod_{i\in\mathcal{I}_1}\left\|v_i^{-1}\mathcal X_B\right\|_{\infty}\, \prod_{i\in\mathcal{I}_2}\left(\frac{1}{|B|}\int_{B} v_i^{-p_i'}(y)\,dy\right)^{1/p_i'}\leq C \inf_{B}\prod_{i=1}^m v_i^{-1}.
	\end{equation*}
	Thus we deduce that
	\begin{equation*}
		{|B|^{\gamma/n-\delta/n-1/p}}\prod_{i=1}^m\inf_{B}v_i^{-1} \leq C \inf_{B}\prod_{i=1}^m v_i^{-1}.
	\end{equation*}
	Since $\prod_{i=1}^m \inf_{B}v_i^{-1}\leq \inf_{B}\prod_{i=1}^m v_i^{-1}$, we get that 
	\begin{equation*}
		|B|^{\gamma/n-\delta/n-1/p}\leq C
	\end{equation*}
	holds for every ball $B$, so that we must have $\delta=\gamma-n/p$. 
\end{proof}

		
\def\cprime{$'$}
\providecommand{\bysame}{\leavevmode\hbox to3em{\hrulefill}\thinspace}
\providecommand{\MR}{\relax\ifhmode\unskip\space\fi MR }
\providecommand{\MRhref}[2]{%
	\href{http://www.ams.org/mathscinet-getitem?mr=#1}{#2}
}
\providecommand{\href}[2]{#2}

\end{document}